\numberwithin{equation}{section}
\newcommand{\m}[1]{\mathcal{#1}}%
\newcommand{\bb}[1]{\mathbb{#1}}%
\newcommand{\mrm}[1]{\mathrm{#1}}%
\renewcommand{\Re}{\mathrm{Re}}%
\renewcommand{\Im}{\mathrm{Im}}%
\newcommand{\RR}{\mathbb{R}}
\newcommand{\CC}{\mathbb{C}}%
\newcommand{\Alt}{\mathcal{A}}
\DeclareMathOperator{\Tr}{Tr}
\DeclareMathOperator{\End}{End}
\DeclareMathOperator{\Hom}{Hom}
\DeclareMathOperator{\rk}{rk}
\DeclareMathOperator{\range}{ran}
\newcommand{\del}{\partial}
\newcommand{\dd}{\mathrm{d}}%
\newcommand{\I}{\mkern1mu\mathrm{i}\mkern1mu}
\newcommand{\e}{\mathrm{e}}
\newcommand{\transpose}{\intercal}
\newcommand{\topdeg}{\mathrm{top}}
\newcommand{\id}{\mathbbm{1}}
\newcommand{\vol}{\mathrm{Vol}}
\newcommand{\sheaf}{\mathcal{O}}
\newcommand{\sing}{\mathrm{Sing}}
\newcommand{\tors}{\mathrm{Tors}}
\newcommand{\Todd}{\mathrm{Td}}
\newcommand{\chern}{\mathrm{c}}
\newcommand{\Chern}{\mathrm{ch}}
\newcommand{\curvform}{\mathcal{F}}
\newcommand{\Zdiff}{\mathcal{Z}}
\newcommand{\Zch}{Z}
\newcommand{\sym}{\mathrm{sym}}
\newcommand{\Hermmetric}{\mathcal{H}^+}
\def\Todd{\mathrm{Todd}}%
\def\End{\mathrm{End}}%
\def\rk{\mathrm{rk}}%
\def\PP{\mathbb{P}}%
\def\ch{\mathrm{ch}}%
\DeclarePairedDelimiter\abs{\lvert}{\rvert}
\newtheorem{thm}{Theorem}[section]%
\newtheorem{prop}[thm]{Proposition}%
\newtheorem{lemma}[thm]{Lemma}%
\newtheorem{cor}[thm]{Corollary}%
\newtheorem{conj}[thm]{Conjecture}%
\theoremstyle{definition}%
\newtheorem{definition}[thm]{Definition}%
\newtheorem{rmk}[thm]{Remark}
\newtheorem{exm}[thm]{Example}
\title{Z-critical equations for holomorphic vector bundles on K\"ahler surfaces}%
\date{}%
\author{Julien Keller and Carlo Scarpa}
\begin{document}

\maketitle

\begin{abstract}
    We prove that the existence of a $\Zch$-positive and $\Zch$-critical Hermitian metric on a rank~$2$ holomorphic vector bundle over a compact K\"ahler surface implies that the bundle is $\Zch$-stable. As particular cases, we obtain stability results for the deformed Hermitian Yang-Mills equation and the almost Hermite-Einstein equation for rank $2$ bundles over surfaces. We show examples of $\Zch$-(un)stable bundles and $\Zch$-critical metrics away from the large volume limit. 
\end{abstract}

\tableofcontents

\section{Introduction}


Let~$E\to X$ be a holomorphic vector bundle over the compact complex manifold~$X$, and assume that~$\omega$ is a K\"ahler form on~$X$. We consider a class of partial differential equations, called \emph{$\Zch$-critical equations}, to be solved for a Hermitian metric~$h$ on~$E$, that take the form
\begin{equation}\label{eq:Zcrit}
\Im(\e^{-\I\vartheta_E}\Zdiff(h))=0.
\end{equation}
The equation depends on~$\omega$ and the choice of a \emph{polynomial central charge} as defined by Bayer~\cite{Bayer_centralcharge}. Briefly, a polynomial central charge~$\Zch$ is defined by a vector of complex numbers~$\rho\in(\bb{C}^*)^n$ and a unitary class~$U\in H^\bullet(X,\bb{R})$, and associates to any subvariety~$V\subset X$ and any sheaf~$S\to V$ a complex number~$\Zch_V(S)$.

The~$\Zch$-critical equations have been introduced by Dervan, McCarthy, and Sektnan in~\cite{DervanMcCarthySektnan} as a possible differential-geometric counterpart to Bridgeland stability conditions. It is conjectured that the existence of solutions of~\eqref{eq:Zcrit} should be equivalent to an algebraic stability condition on the bundle~$E$, at least in certain regimes. This expectation has already been partially confirmed in various interesting cases. For a line bundle and a particular choice of central charge,~\eqref{eq:Zcrit} becomes the deformed Hermitian Yang-Mills equation (dHYM equation), for which it is known that the existence of solutions is equivalent to an algebraic positivity condition that is reminiscent of the Nakai-Moishezon criterion. This correspondence for the dHYM equation has attracted a lot of attention in recent years; for the sake of brevity, we refer the reader to~\cite{CollinsJacobYau_stability},~\cite{GaoChen_Jeq_dHYM},~\cite{Takahashi_dHYM_MoishezonCriterion} for an in-depth treatment of the subject. On higher-rank bundles, one of the main results of~\cite{DervanMcCarthySektnan} is that one has a correspondence between the existence of solutions of~\eqref{eq:Zcrit} and the \emph{$\Zch$-stability} of~$E$ in an asymptotic regime known as the \emph{large volume limit}. It is still an open question to link the existence of solutions of~\eqref{eq:Zcrit} to Bridgeland stability conditions. We refer to~\cite{McCarthy_thesis} for some general considerations in this direction and to the papers~\cite{CollinsYau_dHYMgeodesics},~\cite{CollinsShi_stabilitydHYM}, and~\cite{CollinsLoShiYau_stability_line_bundles} for an in-depth discussion of the possible issues with this correspondence for the dHYM equation on line bundles.

Establishing the existence of solutions of~\eqref{eq:Zcrit} poses in general exceptional difficulties, and most of the few results that are known are limited to low-rank and low-dimensional situations, where~$\Zch$-critical equations simplify substantially. A possible exception is given by the examples of dHYM connections over a Fano threefold appearing in the recent paper~\cite{Correa_dHYM_highrank}.
If~$X$ is a curve, the~$\Zch$-critical equation reduces to the Hermite-Einstein problem, for~$E$ of any rank. If instead~$X$ is a complex surface and~$\rk(E)=1$, the problem of the existence of solutions for any~$\Zch$-critical equation is essentially settled in~\cite[\S$2.3.3$]{DervanMcCarthySektnan}, at least under a mild positivity assumption called the \emph{volume form hypothesis}. For~$\rk(E)=1$, it seems likely that one can approach the~$\Zch$-critical equation following the study of the deformed Hermitian Yang-Mills equation. The possible presence of a non-vanishing unitary class (see Section~\ref{sec:Zcrit} for the definition) however greatly complicates the analysis even in the rank-$1$ case: it might not be possible to write the~$\Zch$-critical equation as a PDE for the eigenvalues of a Hermitian matrix, a feature of the dHYM equation that was crucial to develop the PDE theory of~\cite{GaoChen_Jeq_dHYM} to link the existence of solutions with an algebraic stability condition conjectured in~\cite{CollinsJacobYau_stability}.

In this paper we focus instead on the case when the base manifold~$X$ is a complex surface, and~$E$ is a vector bundle of rank greater than one. Our main goal is to refine the conjectural correspondence proposed in~\cite{DervanMcCarthySektnan} between the existence of~$\Zch$-critical metrics~$h\in\Hermmetric(E)$ and algebraic stability conditions on the pair~$(X,E)$. In particular, we show evidence in support of such a correspondence in \emph{non-asymptotic regimes}, at least for bundles of rank~$2$. A key role is played by a property of metrics ~$h\in\Hermmetric(E)$ called \emph{$\Zch$-positivity}. This is a notion of subsolution for the~$\Zch$-critical equation, see Section~\ref{sec:subsolutions} for the precise definition.

\begin{thm}\label{thm:Zcrit_stability}
    Let~$X$ be a compact K\"ahler surface, and let~$\Zch$ be a polynomial central charge. For any rank~$2$ indecomposable vector bundle~$E\to X$, if there exists~$h\in\Hermmetric(E)$ that is~$\Zch$-positive and solves the~$\Zch$-critical equation, then for any sub-bundle~$S\subset E$ such that~$0<\rk(S)<\rk(E)$ one has
    \begin{equation}
        \Im\left(\frac{\Zch_X(S)}{\Zch_X(E)}\right)<0.
    \end{equation}
    If the bundle moreover satisfies condition~\eqref{eq:Z_alphapositive} below, this inequality holds for any coherent saturated subsheaf~$S\subset E$ of rank~$0<\rk(S)<\rk(E)$.
\end{thm}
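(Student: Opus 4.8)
The plan is to adapt to the $\Zch$-critical setting the classical argument by which the existence of a Hermite--Einstein metric forces slope stability: integrate the equation against the projection onto a destabilizing subobject, and use $\Zch$-positivity to control the error terms that arise because, on a surface, a polynomial central charge also sees the degree-four part of the Chern character and the unitary class. Fix a proper holomorphic sub-bundle $S\subset E$; since $\rk(E)=2$ we have $\rk(S)=1$. Equip $S$ with the metric $h_S$ induced by $h$, write $\pi\in C^\infty(X,\End E)$ for the $h$-orthogonal projection onto $S$, and let $\phi$ be the associated second fundamental form, a $(0,1)$-form with values in $\Hom(E/S,S)$, which vanishes identically exactly when $S$ is $h$-parallel. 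The first step is to record the Chern--Gauss--Codazzi identity $F_{h_S}=\pi F_h\,\pi+\Phi$, where $\Phi$ is the curvature correction built from $\phi$ and $\I\,\Phi$ is a nonpositive $(1,1)$-form, together with the analogous formula for $\pi\,(F_h\wedge F_h)\,\pi$. Since on a surface $\Zdiff$ is a fixed polynomial of degree at most two in the curvature, wedged with scalar forms built from $\omega$, the unitary class and $\Todd(X)$, substituting these identities yields, after using that $\Chern$ is additive on $0\to S\to E\to E/S\to 0$ to rewrite the off-diagonal curvature contributions through $\phi,\bar\phi$ and the diagonal blocks of $F_h$ (the term quadratic in $\Phi$ dropping out because $\rk(S)=1$), an identity of $(2,2)$-forms
\[
\Zdiff(h_S)=\Tr\!\bigl(\pi\,\Zdiff(h)\bigr)+\Delta(h,\phi),
\]
with $\Zdiff(h_S)$ the scalar representative of $\Zch_X(S)$ obtained from $h_S$ (legitimate because $\Zch_X(S)$ depends only on $\Chern(S)$) and $\Delta$ universal, vanishing when $\phi\equiv0$. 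Integrating gives $\Zch_X(S)=\int_X\Tr(\pi\,\Zdiff(h))+\int_X\Delta(h,\phi)$.

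Next, apply $\Tr(\pi\,\cdot\,)$ to the equation~\eqref{eq:Zcrit}. As $\pi$ is $h$-self-adjoint, $\Tr(\pi\,\cdot\,)$ commutes with taking the Hermitian imaginary part, so $\Im\bigl(\e^{-\I\vartheta_E}\int_X\Tr(\pi\,\Zdiff(h))\bigr)=\int_X\Tr\bigl(\pi\,\Im(\e^{-\I\vartheta_E}\Zdiff(h))\bigr)=0$. Recalling that $\Zch$-positivity forces $\Zch_X(E)\neq0$, so that $|\Zch_X(E)|=\e^{-\I\vartheta_E}\Zch_X(E)>0$, the identity above gives
\[
\Im\!\left(\frac{\Zch_X(S)}{\Zch_X(E)}\right)=\frac{1}{|\Zch_X(E)|}\,\Im\!\left(\e^{-\I\vartheta_E}\!\int_X\Delta(h,\phi)\right).
\]
The heart of the proof, and the step I expect to be the main obstacle, is to show that $\Zch$-positivity of $h$, combined with the equation $\Im(\e^{-\I\vartheta_E}\Zdiff(h))=0$, forces $\Im(\e^{-\I\vartheta_E}\Delta(h,\phi))\leq0$ pointwise, with equality at a point only if $\phi$ vanishes there. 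The $\chern_1\wedge\omega$-type part of $\Delta$ is a fixed scalar multiple of the nonpositive real form $\I\,\Phi\wedge\omega$, in which one recognizes the classical subbundle inequality, so it has the desired sign; the trouble is the $\Chern_2$-type part, where $\Phi$ is wedged with the \emph{a priori} unsigned diagonal blocks $\pi F_h\pi$ and $(1-\pi)F_h(1-\pi)$, and with the unitary-class terms. Here one must use the precise form of $\Zch$-positivity from Section~\ref{sec:subsolutions}: it is exactly the condition under which, after eliminating the Hermitian imaginary part of $F_h$ by means of the equation, the remaining expression—quadratic in $\phi$—is manifestly nonpositive.

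Granting this, $\Im(\Zch_X(S)/\Zch_X(E))\leq0$ for every proper sub-bundle, and equality forces $\phi\equiv0$, i.e. $E=S\oplus S^{\perp}$ as holomorphic bundles, contradicting indecomposability; hence the inequality is strict. Finally, for the statement about saturated subsheaves under hypothesis~\eqref{eq:Z_alphapositive}: a saturated rank-$1$ subsheaf of $E$ is reflexive of rank $1$, hence a line bundle, and is a sub-bundle over the complement of a finite set $Z\subset X$, near which $h_S$ degenerates and $\phi$ may blow up. One reruns the argument over $X\setminus Z$; the two new points are the integrability of $\Delta(h,\phi)$ near $Z$ and the equality $\int_{X\setminus Z}\Zdiff(h_S)=\Zch_X(S)$ for the singular induced metric, the possible defect being a residue concentrated at $Z$. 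I would handle both by excising $\varepsilon$-balls around $Z$ and estimating (passing, if needed, to the blow-up of $X$ along the degeneracy scheme), condition~\eqref{eq:Z_alphapositive} being precisely what guarantees the $L^1$-bounds, the vanishing of the boundary contributions as $\varepsilon\to0$, and the correct sign of the residue; the equality case is as before, since a holomorphic splitting over $X\setminus Z$ extends across the codimension-two set $Z$.
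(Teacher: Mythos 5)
Your skeleton is the same as the paper's (decompose the curvature via the second fundamental form $\phi=A$ of $S\subset E$, integrate the equation against the block decomposition, and use $\Zch$-positivity tested on $\xi=A$ to control the sign of the quadratic-in-$A$ remainder), but the step you yourself flag as ``the heart of the proof'' is precisely where the content lies, and your claim that the remainder is ``manifestly nonpositive'' is not correct as stated. Two concrete problems. First, the diagonal $S$-block of $\curvform(h)^2$ contains the term $\left(\tfrac{\I}{2\pi}\right)^2 D'A\wedge D''A^*$, which is a wedge of two $(1,1)$-forms and has \emph{no pointwise sign}; it does not cancel for rank reasons and it is not controlled by $\Zch$-positivity. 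The paper eliminates it only globally, by writing the analogous integral identity for the quotient $Q$ as well and invoking the trace identities $\Tr((A\wedge A^*)^2)+\Tr((A^*\wedge A)^2)=0$ and $\Tr(D'A\wedge D''A^*)=\Tr(D''A^*\wedge D'A)$ (Lemma~\ref{lemma:trace_indentities}) to solve for $\int\Tr(D'A\wedge D''A^*)$ in terms of the curvature--$A$ cross terms. Your sketch never accounts for this term. Second, even after that elimination, $\Zch$-positivity tested on $\xi=A$ (Lemma~\ref{lemma:subsol1}) yields the needed combination of cross terms bounded below by $2\left(\tfrac{\I}{2\pi}\right)^2\int\Tr((A^*\wedge A)^2)$, whereas the stability inequality requires the bound with coefficient $1$; the paper explicitly notes that positivity alone does \emph{not} close this factor-of-$2$ gap in general, and closes it only via the separate pointwise computation that $\I^2\Tr((A^*\wedge A)^2)\geq 0$ for corank-one sub-bundles (in rank $2$ this quartic term in fact vanishes identically). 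So the assertion ``positivity is exactly the condition under which the remainder is nonpositive'' is false at the level of generality you state it; it is positivity \emph{plus} a rank/corank-specific algebraic inequality. Relatedly, the sign of your ``$\chern_1\wedge\omega$-type part'' depends on the positivity of the form $\beta$ from~\eqref{eq:abc_coeff}, which is itself a consequence of $\Zch$-positivity (Lemma~\ref{lemma:subsol_positiveclass}) and not automatic, and the whole argument bifurcates according to the sign of $\alpha$ (for $\alpha<0$ the metric is Monge--Amp\`ere \emph{negative} and all inequalities reverse), which your write-up does not track.

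On the saturated-subsheaf extension: your excision plan is plausible in outline, but the role of condition~\eqref{eq:Z_alphapositive} is not what you describe. In the paper (following Pingali), $\alpha>0$ is needed because the passage from the saturation to the sub-bundle it defines off a finite set changes $\Chern_2$ by the length of a zero-dimensional scheme, and this correction enters the slope comparison with a sign governed by $\alpha$; it is not primarily an integrability or boundary-term issue. As written, your argument establishes the framework but leaves the two decisive inequalities unproved, so it does not yet constitute a proof.
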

This is the first stability result for the existence of solutions to the~$\Zch$-critical equation, and the dHYM equation in particular, for bundles of rank greater than~$1$ in a non-asymptotic regime. Theorem~\ref{thm:Zcrit_stability} gives strong additional evidence for an algebraic characterisation of the existence of solutions to the~$\Zch$-critical equation, along the lines of~\cite[Conjecture~$1.6$]{DervanMcCarthySektnan}.

Our computations and some simple examples make it natural to propose a refinement of the conjectural stability picture of~\cite{DervanMcCarthySektnan} for the existence of~$\Zch$-critical metrics.
\begin{definition}\label{def:Zstable}
    Given a polynomial central charge~$\Zch$, a holomorphic vector bundle~$E$ over~$X$ is said to be \emph{$\Zch$-stable} (resp.~$\Zch$-\emph{semistable}) if, for any coherent torsion-free quotient~$Q$ of~$E$ of rank~$0<\rk(Q)<\rk(E)$,
    \begin{equation}\label{eq:Zstable}
        \Im\left(\frac{\Zch_X(Q)}{\Zch_X(E)}\right)>0\ (\mbox{resp. }\geq 0).
    \end{equation}
\end{definition}
One should not aspect, in general, that the existence of a~$\Zch$-critical metric implies~$\Zch$-stability in the absence of some positivity of the metric, see Remark~\ref{rmk:positivity_needed_stability} and Example~\ref{ex:not_positive_P2}. On the other hand, the existence of a~$\Zch$-positive metric on a bundle should allow us to link the existence of~$\Zch$-critical metrics and~$\Zch$-stability.
\begin{conj}\label{conj:Zstable}
    Let~$\Zch$ be a polynomial central charge on a K\"ahler manifold~$X$. Assume that~$E$ is an indecomposable holomorphic vector bundle on~$X$ that admits a~$\Zch$-positive Hermitian metric. Then there exists a~$\Zch$-positive solution of the~$\Zch$-critical equation if and only if~$E$ is~$\Zch$-stable.
\end{conj}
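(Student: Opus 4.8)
The statement is an equivalence, and I would treat its two implications separately, as they are of completely different character. The forward direction — that a $\Zch$-positive solution of~\eqref{eq:Zcrit} forces $\Zch$-stability — should follow by extending the mechanism behind Theorem~\ref{thm:Zcrit_stability} from rank~$2$ surfaces to arbitrary rank and dimension. Given a torsion-free quotient $Q$ of $E$ with $0<\rk(Q)<\rk(E)$ and the associated saturated subsheaf $S=\ker(E\to Q)$, the plan is to convert the critical equation $\Im(\e^{-\I\vartheta_E}\Zdiff(h))=0$ into a Chern--Weil type identity for $\Zch_X(S)$ and $\Zch_X(Q)$, obtained by pairing the curvature of $h$ with the orthogonal projection onto $S$ and integrating over $X$. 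The second fundamental form of $S\subset E$ contributes a term of definite sign, and $\Zch$-positivity of $h$ is precisely what guarantees that this contribution, together with the imaginary part of the normalised central charge, yields the strict inequality $\Im(\Zch_X(Q)/\Zch_X(E))>0$. Indecomposability is used to exclude the degenerate case in which $S$ splits off as a direct summand, which would only give a non-strict inequality.

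The converse — that $\Zch$-stability together with the existence of \emph{some} $\Zch$-positive metric implies the existence of a $\Zch$-positive \emph{solution} — is the genuinely hard, analytic half, and I would attack it by a continuity method anchored at the large-volume limit. Concretely, embed $\Zch$ in a one-parameter family $\Zch_t$ rescaling the Kähler class, so that at $t=0$ one recovers the asymptotic regime in which Dervan--McCarthy--Sektnan established the correspondence between solvability and $\Zch$-stability, while $t=1$ is the desired charge; the standing hypothesis that $E$ carries a $\Zch$-positive metric ensures the relevant function space is non-empty at the start. Let $T\subset[0,1]$ be the set of parameters admitting a $\Zch_t$-positive solution. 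Openness of $T$ follows from the implicit function theorem: the linearisation of the $\Zch$-critical operator at a $\Zch_t$-positive metric is a second-order \emph{elliptic} operator — ellipticity being exactly what $\Zch$-positivity encodes — whose kernel on the indecomposable $E$ is trivial modulo the obvious scaling, hence it is invertible on the appropriate Sobolev spaces; positivity, being an open condition, persists for short time.

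Closedness of $T$ is where $\Zch$-stability must enter, and it is the crux. Following the Uhlenbeck--Yau strategy for the Donaldson--Uhlenbeck--Yau theorem, suppose $\Zch_{t_i}$-positive solutions $h_i$ with $t_i\to t_\infty$ fail to converge. The $\Zch$-positivity should give uniform $L^2$ control of the relevant curvature quantities, but a $C^0$ bound on the $h_i$ may fail; after normalising and passing to a weak limit of the rescaled endomorphisms $\log(h_i h_0^{-1})$, a removal-of-singularities argument would produce a weakly holomorphic subbundle, hence a coherent saturated subsheaf $S\subset E$. Tracking the central charge along the degeneration should force $\Im(\Zch_X(E/S)/\Zch_X(E))\le 0$, contradicting $\Zch$-stability; so no degeneration occurs, the $C^0$ and higher estimates close up, $t_\infty\in T$, and $T=[0,1]$.

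The principal obstacle — and the reason this remains conjectural — is the combination of two features absent from the classical Hermite--Einstein setting: the $\Zch$-critical equation is fully nonlinear rather than semilinear, and it is not in general the Euler--Lagrange equation of a convex functional. The a priori estimate of the closedness step, and above all the extraction of a destabilizing subsheaf from a degenerating sequence, rely in the DUY theorem on the linear structure of the mean-curvature operator; reproducing them for $\Zch$-critical metrics demands new estimates adapted to the fully nonlinear operator, with $\Zch$-positivity playing the role the subsolution condition plays for the dHYM equation, namely restoring ellipticity of the linearisation and a usable convexity of the natural $\Zch$-energy along geodesics in the space of $\Zch$-positive metrics. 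Making the Uhlenbeck--Yau limiting procedure compatible with this nonlinear, positivity-constrained setting is the step I expect to be hardest, and a purely variational alternative — proving coercivity of the $\Zch$-energy under stability — runs into the same non-convexity difficulty.
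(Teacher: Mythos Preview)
The statement you are addressing is a \emph{conjecture} in the paper, not a theorem; the paper does not supply a proof, and so there is nothing to compare your attempt against directly. What the paper does offer is partial evidence: Theorem~\ref{thm:Zcrit_stability} establishes the forward implication for rank~$2$ bundles over surfaces, Proposition~\ref{prop:alphazero} proves the full equivalence in the degenerate case $\alpha=0$ (where the equation reduces to Hermite--Einstein), and Proposition~\ref{prop:openness} gives the openness step of a continuity argument. Your proposal is not a proof but a strategy outline, and you explicitly acknowledge this in your final paragraph; that is the correct assessment.

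Your sketch of the forward direction is in the spirit of the paper's actual argument for the rank~$2$ surface case (Section~\ref{sec:positivity_stability}): one does project the equation onto a subbundle via the second fundamental form and uses $\Zch$-positivity to control the sign of the resulting terms. However, the paper's proof already hits a genuine obstacle in extending beyond rank~$2$: the key inequality in Lemma~\ref{lemma:subsol1} differs from the desired one~\eqref{eq:vbMA_stability_keycondition} by a factor of~$2$ on the $(A^*\wedge A)^2$ term, and the argument in Corollary~\ref{cor:rank2_MAstability} closing this gap uses that $\I^2\Tr((A^*\wedge A)^2)\geq 0$, which is verified only for corank~$1$ subbundles. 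Your outline glosses over this; the ``Chern--Weil identity plus definite sign'' mechanism does not go through cleanly in higher rank or dimension as stated.

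For the converse, your continuity-method outline is reasonable and the openness step is essentially Proposition~\ref{prop:openness}. But your choice of path --- deforming to the large-volume limit --- is problematic: as the paper notes (Remark~\ref{rmk:largevolumelimit} and the discussion around Lemma~\ref{lemma:Zpositive_asymptotic}), the large-volume regime is typically \emph{sub}-critical for the phase, and the paper itself suggests that some supercritical-phase hypothesis will be needed for the conjecture to hold. There is no reason $\Zch$-stability of $E$ at $t=1$ propagates along the path, nor that $\Zch$-positivity of a metric persists; both are needed for the continuity argument to run. The closedness step, as you say, is where the real difficulty lies and remains open.
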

This should be compared with~\cite[Theorem~$1.2$]{CollinsJacobYau_stability} (see also~\cite{GaoChen_Jeq_dHYM}), where for the dHYM equation on a line bundle, it is shown that the existence of a subsolution is sufficient to show that there is a solution, under a \emph{supercritical phase condition}.

A natural question then is to find conditions on a bundle that might guarantee the existence of~$\Zch$-positive metrics. The case of the J-equation and the dHYM equations in rank~$1$ suggest that this condition should involve subvarieties of~$X$. Our second result shows that the existence of~$\Zch$-positive metrics implies conditions of this type.
\begin{definition}\label{def:ZpositiveZstable}
    Given a polynomial central charge~$\Zch$, a holomorphic vector bundle~$E$ over~$X$ is said to be \emph{$\Zch$-positive} if, for any analytic sub-variety~$V\subset X$ of dimension~$0<\dim V<\dim X$,
    \begin{equation}\label{eq:Zpositive}
        \Im\left(\frac{\Zch_V(E_{\restriction V})}{\Zch_X(E)}\right)>0.
    \end{equation}
\end{definition}
It will be important in what follows to also consider the inequality~\eqref{eq:Zpositive} for~$0$-di\-men\-sional analytic subvarieties~$V\subset X$, namely
\begin{equation}\label{eq:Z_alphapositive}
    \mbox{for any }x\in X,\quad \Im\left(\frac{\Zch_{\{x\}}(E_x)}{\Zch_X(E)}\right)>0.
\end{equation}
We refer to Remark~\ref{rmk:strong_Zpositive} for an alternative characterisation of condition~\eqref{eq:Z_alphapositive}.

\begin{thm}\label{thm:Zcrit_positivity}
    Let~$E\to X$ be a vector bundle on the compact K\"ahler surface~$X$, and fix a polynomial central charge~$\Zch$. If there exists a~$\Zch$-positive metric~$h\in\Hermmetric(E)$, then
    \begin{enumerate}[label=\arabic*.,ref=\thethm (\arabic*)]
        \item\label{thm:Zcrit_positivity_bundle}~$E$ is~$\Zch$-positive.
    \end{enumerate}
    If moreover~$E$ is a rank~$2$ bundle over a projective surface~$X$ that satisfies~\eqref{eq:Z_alphapositive}, then
    \begin{enumerate}[start=2,label=\arabic*.,ref=\thethm (\arabic*)]
        \item\label{thm:Zcrit_positivity_quotients}~$\Im\left(\frac{\Zch_V(Q)}{\Zch_X(E)}\right)>0$ for any~$1$-dimensional analytic (irreducible and reduced) subvariety~$V\subset X$ and any torsion-free quotient~$Q$ of~$E_{\restriction V}$ such that~$0<\rk(Q)<\rk(E)$.
    \end{enumerate}
\end{thm}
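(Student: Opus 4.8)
The plan is to convert the analytic hypothesis of $\Zch$-positivity of a metric into the algebraic inequalities by integrating a pointwise positivity of the relevant curvature expression over the subvarieties $V\subset X$. First I would unwind the definition of $\Zch$-positivity of $h\in\Hermmetric(E)$ from Section~\ref{sec:subsolutions}: it should say that a certain Hermitian-matrix-valued $(1,1)$-form built from the curvature $\curvform_h$ and the central-charge data $(\rho,U)$ is positive definite. For part~1, I would integrate the associated differential form identity against the fundamental class of an analytic subvariety $V$ of dimension $1$ (resp.\ use the defining relation at points for dimension $0$). Since $\Zch_V(E_{\restriction V})$ and $\Zch_X(E)$ are both computed as integrals of Chern--Weil representatives of $\Chern(E)\Todd(X)U$ against $V$ and $X$ respectively, the ratio in~\eqref{eq:Zpositive} will have imaginary part equal (up to a positive normalising factor coming from $\Zch_X(E)$, which one must check is nonzero and correctly oriented — here the large-volume-type normalisation or the volume form hypothesis enters) to the integral over $V$ of the pointwise positive quantity supplied by $\Zch$-positivity of $h$. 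The technical care needed is: (a) restricting the metric and its curvature to $V$, handling the singular locus of $V$ by working on the smooth part and invoking that the integrand extends with at worst integrable singularities, or by passing to a resolution $\nu\colon\tilde V\to V$ and using functoriality of Chern--Weil forms; and (b) controlling the argument of $\Zch_X(E)$ so that dividing by it preserves the sign of the imaginary part.

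For part~2 the situation is genuinely harder, because a torsion-free quotient $Q$ of $E_{\restriction V}$ on a curve $V$ is no longer the restriction of a bundle on $X$, and there is no ambient metric to restrict. The plan is: given a $1$-dimensional irreducible reduced $V\subset X$ and a torsion-free quotient $q\colon E_{\restriction V}\twoheadrightarrow Q$ with $0<\rk Q<\rk E=2$, so necessarily $\rk Q=1$, let $L=Q/\mrm{Tors}$, a line bundle on the normalisation, or work directly with the saturation. Equip $Q$ with the quotient metric induced by $h_{\restriction V}$ and use the Gauss--Codazzi / curvature-decreasing property for quotient bundles: the curvature of the quotient metric is bounded below (in the appropriate Griffiths/Nakano sense against the form we need) by the restriction of $\curvform_h$ composed with the orthogonal projection. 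Then $\Zch_V(Q)$ is computed from $\ch(Q)\,\Todd(X)_{\restriction V}\,U_{\restriction V}$, i.e.\ essentially from $\deg Q$ and $\rk Q$ against $V$, and I would show that the Chern--Weil integrand for this quantity, when evaluated using the quotient metric, is pointwise $\geq$ the corresponding integrand from the $\Zch$-positivity of $h$ plus the contribution of the second fundamental form, which has a definite sign. Summing the $\Zch$-positivity inequalities over the two ``eigen-directions'' determined by $S=\ker q$ and $Q$ — this is where $\rk E=2$ is essential, since the orthogonal splitting of $E_{\restriction V}$ into line sub- and quotient-bundles lets one diagonalise — and using~\eqref{eq:Z_alphapositive} to absorb the $0$-dimensional (torsion) contributions, yields the claimed strict inequality for $Q$.

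The main obstacle I expect is part~2(b): correctly handling the torsion of $Q$ and the singularities of $V$. Concretely, the quotient sheaf $Q$ may fail to be locally free at finitely many points of $V$ (and $V$ itself may be singular), so the ``quotient metric'' is only defined off a finite set, and one must argue that (i) the relevant Chern--Weil current still represents $\Zch_V(Q)$ — this should follow from the fact that modifying a torsion-free sheaf at finitely many points changes its degree by a nonnegative amount, so the torsion only helps the inequality, which is exactly why hypothesis~\eqref{eq:Z_alphapositive} about points is invoked — and (ii) the second-fundamental-form term, which is $L^2$ but a priori only defined on the smooth locus, integrates to the correct cohomological quantity with the correct sign. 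I would handle both by pulling everything back to a resolution of singularities of $V$ on which $Q$ becomes (after dividing by torsion) a genuine line bundle with a smooth Hermitian metric, carrying out the pointwise curvature comparison there, and pushing forward; the bookkeeping between $\deg$ on $\tilde V$, $\deg$ on $V$, and the point contributions controlled by~\eqref{eq:Z_alphapositive} is the delicate part. A secondary obstacle is simply verifying throughout that the normalisation by $\Zch_X(E)$ never flips the inequality — i.e.\ pinning down $\arg\Zch_X(E)$ from the existence of the $\Zch$-positive (hence $\Zch$-critical-subsolution) metric $h$ on all of $X$ — but this should be a direct consequence of applying part~1's computation with $V$ replaced by $X$ itself together with the definition of $\Zch$-positivity of the metric.
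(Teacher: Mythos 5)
Your proposal follows essentially the same route as the paper: part 1 by testing $\Zch$-positivity on $\xi=v\otimes\id_E$ to get positivity of the $(1,1)$-form $2\alpha\Tr\curvform(h)+\rk(E)\beta$ and pairing it with curves, and part 2 by endowing the rank-one quotient $Q$ with the induced metric, using the Gauss--Codazzi curvature decomposition with the second-fundamental-form term whose sign is controlled by $\alpha>0$ (i.e.\ \eqref{eq:Z_alphapositive}), and passing to the normalisation of $V$ together with the inequality $\deg_V(Q)\geq\deg_{\hat V}(\pi^*Q/\tors)$ to handle singularities and torsion. The only minor discrepancy is that the paper's key lemma for part 2 needs no rank-$2$ hypothesis and no ``summing over eigen-directions'' (it works for rank-one quotients of bundles of any rank); rank $2$ enters only to ensure every proper torsion-free quotient has rank one.
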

If the opposite inequality holds in~\eqref{eq:Z_alphapositive} we have analogous properties for subsheaves of~$E$ rather than quotients, see Remark~\ref{rmk:subbundles} below. Theorem~\ref{thm:Zcrit_positivity_bundle} is due to McCarthy, who proved it for codimension~$1$ analytic submanifolds of a K\"ahler manifold of arbitrary dimension, see~\cite[Theorem~$1.6$ and Theorem~$4.3.13$]{McCarthy_thesis}. For future reference, we will call the property in Theorem~\ref{thm:Zcrit_positivity_quotients} \emph{$\Zch$-positivity for quotients}.

We should also remark that it might be possible to obtain stronger results than Theorem~\ref{thm:Zcrit_positivity}, since the~$\Zch$-positivity of a metric on~$E$ is a priori stronger than the positivity condition we need to obtain each part of the statement, see Section~\ref{sec:positivity_stability}. Still, the recent characterisations for the existence of solutions of the J-equation~\cite{DatarPingali_numericalcriterion} and the dHYM equation in rank~$1$~\cite{Takahashi_dHYM_MoishezonCriterion} suggest that one might hope for a correspondence between the existence of~$\Zch$-positive metrics and positivity properties of the bundle, at least in the projective case.
\begin{conj}\label{conj:Zpositive}
    Let~$\Zch$ be a polynomial central charge on a projective surface~$X$. Assume that~$E$ is a~$\Zch$-positive rank~$2$ holomorphic vector bundle on~$X$ that satisfies~\eqref{eq:Z_alphapositive}. If for any~$1$-dimensional analytic subvariety~$V\subset X$ and any torsion-free quotient~$Q$ of~$E_{\restriction V}$
    \begin{equation}
        \Im\left(\frac{\Zch_V(Q)}{\Zch_X(E)}\right)>0,
    \end{equation}
    then~$E$ admits a~$\Zch$-positive metric~$h\in\Hermmetric(E)$.
\end{conj}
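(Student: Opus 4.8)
The plan is to prove the converse to Theorem~\ref{thm:Zcrit_positivity} by constructing a $\Zch$-positive metric out of the numerical data, following the strategy that has succeeded for the J-equation~\cite{DatarPingali_numericalcriterion} and the dHYM equation in rank~$1$~\cite{Takahashi_dHYM_MoishezonCriterion}, where a Nakai--Moishezon-type positivity over subvarieties is shown to suffice for the existence of a subsolution. First I would unwind the definition of $\Zch$-positivity of a metric (Section~\ref{sec:subsolutions}) into a pointwise positivity of an $\End(E)$-valued $(1,1)$-form $\gamma_h$ built from the Chern curvature $\curvform_h$, so that the goal becomes: under the stated hypotheses, the set of metrics $h$ with $\gamma_h>0$ is nonempty. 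Since $E$ has rank~$2$, this positivity is governed by the two eigenvalue-forms of $\gamma_h$; if $E$ splits into line bundles one reduces directly to two rank-$1$ dHYM-type positivity problems, so the interesting case is the indecomposable one. This reformulation exhibits the problem as a vector-bundle analogue of the Demailly--Paun characterisation of the K\"ahler cone, with subvarieties and points of $X$ replaced by curves $V$ carrying quotients $Q$ of $E_{\restriction V}$ and by points $x$, through the quantities appearing in~\eqref{eq:Zpositive} and~\eqref{eq:Z_alphapositive}.

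I would then run a continuity method in a one-parameter family of central charges $\{\Zch_t\}_{t\in(0,1]}$ with $\Zch_1=\Zch$ and with $t\to 0$ the large volume limit, where the results of~\cite{DervanMcCarthySektnan} produce a $\Zch_t$-positive metric out of slope stability. The preliminary step is to verify that the numerical positivity-for-quotients hypothesis propagates along the family: I would establish a monotonicity of $t\mapsto\Im\bigl(\Zch_{t,V}(Q)/\Zch_{t,X}(E)\bigr)$ so that the inequality at $t=1$, together with $\Zch$-positivity of the bundle and~\eqref{eq:Z_alphapositive}, forces the corresponding inequality for all $t\in(0,1]$ and degenerates to slope positivity as $t\to 0$. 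Writing $T=\{t\in(0,1]:\ \exists\,h\text{ that is }\Zch_t\text{-positive}\}$, openness of $T$ is immediate: if $h$ is $\Zch_t$-positive then, since $X$ is compact and $\gamma_h$ depends continuously on $t$, the same $h$ stays $\Zch_{t'}$-positive for $t'$ near $t$.

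The crux is the closedness of $T$, and this is where I expect the main obstacle to lie. Given $\Zch_t$-positive metrics $h_t$ for $t>t_0$, one normalises and extracts a limiting metric $h_{t_0}$; the issue is to prevent $\gamma_{h_{t_0}}$ from losing positivity. I would prove an a priori estimate by a mass-concentration argument in the spirit of Demailly--Paun and Boucksom: failure of positivity in the limit must concentrate a definite amount of the eigenvalue mass of $\gamma_{h_t}$ either at a point $x$ or along an irreducible curve $V$ together with a distinguished quotient $Q$ of $E_{\restriction V}$. Matching the concentrated mass with central-charge periods, the point case is excluded by~\eqref{eq:Z_alphapositive} and the curve case by the positivity-for-quotients hypothesis, since concentration would force $\Im\bigl(\Zch_V(Q)/\Zch_X(E)\bigr)\leq 0$, a contradiction. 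This is exactly Theorem~\ref{thm:Zcrit_positivity} read in reverse: there, positivity of the metric implies the numerical inequalities; here the numerical inequalities must be shown to forbid the degeneration.

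The hardest technical point is to make this concentration-compactness rigorous for an $\End(E)$-valued positive form, where eigenvalue crossing of $\gamma_h$ and the loss of transversality between the destabilising sub-line-bundle and the quotient along $V$ must be tracked simultaneously; the two-dimensionality of $X$ and the rank-$2$ hypothesis are what make this tractable, since a degenerating rank-$1$ subsheaf is then essentially pinned down by a curve and a point. A further source of difficulty, already flagged in the introduction for the rank-$1$ case, is the contribution of the unitary class $U$: when $U\neq 0$ the condition $\gamma_h>0$ is not a condition on the eigenvalues of a single Hermitian endomorphism, so the concentration analysis must be carried out for the full form $\gamma_h$ rather than for a scalar Monge--Amp\`ere-type quantity. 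I expect that controlling this term cleanly may require the volume-form hypothesis, or an auxiliary positivity built into the definition of $\Zch$-positivity, exactly as in the rank-$1$ surface case settled in~\cite[\S2.3.3]{DervanMcCarthySektnan}.
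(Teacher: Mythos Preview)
The statement you are attempting to prove is Conjecture~\ref{conj:Zpositive}, which is explicitly labelled as a \emph{conjecture} in the paper and is not proved there. The paper offers no argument for it; on the contrary, immediately after stating it the authors remark that the conjectural correspondence ``will need some refinement, probably in the form of a `supercritical phase condition'\,'', and later in Section~\ref{sec:stabilities} they observe that asymptotic $\Zch$-positivity of bundles can hold without any $\Zch$-positive metric existing, which is further evidence that the conjecture as stated is incomplete. So there is no proof in the paper to compare your proposal against.

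What you have written is a plausible research outline, not a proof. You are candid about this yourself: the closedness step in your continuity method is entirely conjectural, and the concentration-compactness argument for an $\End(E)$-valued form is a genuine open problem that you do not resolve. Two specific gaps worth naming: first, your monotonicity claim for $t\mapsto\Im\bigl(\Zch_{t,V}(Q)/\Zch_{t,X}(E)\bigr)$ along the path to the large volume limit is asserted but not justified, and there is no reason to expect it without further hypotheses on the stability vector. Second, even granting a mass-concentration dichotomy, matching the concentrated mass to the central-charge periods of a specific quotient $Q$ of $E_{\restriction V}$ requires a structure theorem for degenerations of Hermitian metrics on rank~$2$ bundles that does not currently exist; the rank~$1$ results you cite rely on scalar potentials and pluripotential theory, machinery with no established analogue for bundle-valued forms.
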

It is likely that these conjectural correspondences between the existence of (sub) solutions to the~$\Zch$-critical equation and~$\Zch$-stability will need some refinement, probably in the form of a ``supercritical phase condition''. Indeed, such a condition is crucial in the stability characterisation for the existence of solutions of the dHYM equation. For general central charges on line bundles over complex surfaces, we also know from~\cite[Theorem 2.27]{DervanMcCarthySektnan} that both conjectures hold under an additional positivity condition called the \emph{volume form hypothesis}. We will briefly discuss in Section~\ref{sec:examples} why a similar hypothesis might in fact be part of the appropriate condition to establish our conjectures, at least over projective surfaces.

The main observation leading to Theorem~\ref{thm:Zcrit_stability} is that the~$\Zch$-critical equation on a surface can be recast as a \emph{vector bundle Monge-Amp\`ere equation}, an equation that coincides with the usual complex Monge-Ampère equation in the case when the bundle has rank~$1$. This property was first noted for the deformed Hermitian Yang-Mills equation in rank~$1$ in~\cite{JacobYau_special_Lag}, and was then exploited in~\cite{DervanMcCarthySektnan} to show that a Nakai-Moishezon-type criterion characterises the existence of solutions to the~$\Zch$-critical equations in rank~$1$. In higher rank, a particular case of this phenomenon was noted by Takahashi in~\cite{Takahashi_Jeq_bundles} for the J-equation, which is the \emph{small radius limit} of the deformed Hermitian Yang-Mills equation. It is important to mention that~\cite{Takahashi_Jeq_bundles} established a result similar to Theorem~\ref{thm:Zcrit_stability} for the J-equation, again on rank~$2$ bundles.

The vector bundle Monge-Ampère equation was first introduced by Pingali in~\cite{Pingali_vbMA}. He showed that, under some positivity assumptions, the existence of solutions of the vector bundle Monge-Ampère equation implies a condition called \emph{Monge-Ampère stability}, see Section~\ref{sec:vbMA} for the definition. Theorem~\ref{thm:Zcrit_stability} will follow from a slight modification of Pingali's stability result:
\begin{thm}[~\cite{Pingali_vbMA}, Proposition~$3.1$]\label{thm:Pingali_stab}
Assume that~$E\to X$ is an indecomposable rank~$2$ holomorphic vector bundle over a compact K\"ahler surface, and that~$h\in\Hermmetric(E)$ solves
\begin{equation}\label{eq:vbMA}
    \curvform(h)^2=\eta\otimes\id_{E}
\end{equation}
for a volume form~$\eta$ on~$X$. If~$\Tr\curvform(h)>0$, then~$E$ is Monge-Amp\`ere stable in the sense of Definition~\ref{MAstabl}.
\end{thm}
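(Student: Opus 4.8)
The plan is to adapt the classical argument (Kobayashi--L\"ubke, in the Hermite--Einstein case) that a canonical metric forces a slope inequality, replacing the K\"ahler polarisation by $\alpha:=\Tr\curvform(h)$, which is a \emph{positive} $(1,1)$-form precisely because of the hypothesis $\Tr\curvform(h)>0$. Since Monge--Amp\`ere stability (Definition~\ref{MAstabl}) asks for a strict slope inequality for subsheaves, I would first reduce to the case of a holomorphic sub-line-bundle. A destabilising coherent subsheaf may be replaced by its saturation, which only increases its $\alpha$-degree; on the surface $X$ a saturated rank-$1$ subsheaf $S\subset E$ is locally free, hence a genuine sub-line-bundle, outside a finite set $Z\subset X$. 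I then equip $S$ and the quotient $Q=E/S$ with the metrics induced by $h$ and let $\gamma\in A^{1,0}(\Hom(S,Q))$ be the second fundamental form of $S$ over $X\setminus Z$.

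The core computation evaluates the (suitably normalised) $\alpha$-degree of $S$. Writing the full curvature in the $C^{\infty}$ orthogonal frame $E\cong S\oplus Q$ as
\begin{equation}
    \curvform(h)=\begin{pmatrix}\curvform_{SS} & \curvform_{SQ}\\ \curvform_{QS} & \curvform_{QQ}\end{pmatrix},
\end{equation}
the Gauss equation gives $\curvform(S)=\curvform_{SS}-\gamma^{\dagger}\wedge\gamma$, so that
\begin{equation}
    \deg_{\alpha}(S)=\int_X\curvform(S)\wedge\alpha=\int_X\curvform_{SS}\wedge\alpha-\int_X\gamma^{\dagger}\wedge\gamma\wedge\alpha.
\end{equation}
Next I would expand the Monge--Amp\`ere equation $\curvform(h)^2=\eta\otimes\id_E$ block by block: its $SS$- and $QQ$-components read $\curvform_{SS}^2+\curvform_{SQ}\wedge\curvform_{QS}=\eta=\curvform_{QS}\wedge\curvform_{SQ}+\curvform_{QQ}^2$, and since $2$-forms commute the two mixed terms coincide, forcing $\curvform_{SS}^2=\curvform_{QQ}^2$ pointwise. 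As $\alpha=\curvform_{SS}+\curvform_{QQ}$, a one-line manipulation then yields the pointwise identity $\curvform_{SS}\wedge\alpha=\tfrac12\alpha^2$, whence $\int_X\curvform_{SS}\wedge\alpha=\tfrac12\int_X\alpha^2=\mu_\alpha(E)$.

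It remains to control the second fundamental form term, and this is where positivity re-enters essentially: $\gamma^{\dagger}\wedge\gamma$ is a non-negative $(1,1)$-form, and because $\alpha$ is \emph{positive} the product $\gamma^{\dagger}\wedge\gamma\wedge\alpha$ is a non-negative top-form, giving $\deg_\alpha(S)\leq\mu_\alpha(E)$. The \emph{strict} inequality demanded by Definition~\ref{MAstabl} comes from indecomposability: were $\gamma\equiv 0$ on $X\setminus Z$, the orthogonal splitting $E\cong S\oplus Q$ would be holomorphic, contradicting that $E$ is indecomposable; hence $\gamma\not\equiv 0$, and positivity of $\alpha$ makes the correction integral strictly positive. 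I expect the main obstacle to lie not in this algebra but in the removable-singularity analysis at the points of $Z$, where $\gamma$ degenerates and the induced metric on $S$ becomes singular: one must argue that this singular metric still computes the cohomological $\alpha$-degree, so that the curvature identities above remain valid after integration across $Z$. This is exactly the technical point handled in Pingali's Proposition~$3.1$, and the ``slight modification'' needed to deduce Theorem~\ref{thm:Zcrit_stability} must preserve it.
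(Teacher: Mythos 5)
There is a genuine gap: your argument proves the wrong inequality. Monge--Amp\`ere stability in the sense of Definition~\ref{MAstabl} (with trivial twist $\vartheta=0$, the case relevant to Theorem~\ref{thm:Pingali_stab}) is the strict inequality of \emph{second Chern character} slopes,
\begin{equation}
  \frac{\Chern_2(S)}{\rk(S)}<\frac{\Chern_2(E)}{\rk(E)},\qquad\text{i.e.}\qquad \tfrac12\,\chern_1(S)^2<\tfrac12\,\Chern_2(E)
\end{equation}
for a sub-line-bundle $S$ of a rank~$2$ bundle, which is \emph{quadratic} in the Chern data of $S$. What your computation establishes is the linear-in-curvature inequality $\int_X\curvform(S)\wedge\alpha\leq\tfrac12\int_X\alpha^2$ with $\alpha=\Tr\curvform(h)$, i.e.\ $\chern_1(S).\chern_1(E)\leq\tfrac12\,\chern_1(E)^2$: this is Mumford slope stability with respect to the K\"ahler class $\chern_1(E)=[\Tr\curvform(h)]$, a genuinely different condition (one inequality is linear in $\chern_1(S)$, the other quadratic; neither formally implies the other). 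Your pointwise identity $\curvform_{SS}\wedge\alpha=\tfrac12\alpha^2$ is a correct consequence of the equation, but wedging the curvature only \emph{once} with $\alpha$ can only ever compute degrees; it cannot see $\Chern_2(S)$.

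To reach the actual Monge--Amp\`ere slopes one must square the block decomposition of $\curvform(h)$ and integrate traces: the $SS$-block of $\curvform(h)^2=\eta\otimes\id_E$ gives, after taking $\Tr$ and $\int_X$, an identity of the form \eqref{eq:vbMA_inteq_S} expressing $2\rk(S)\,\mu_{MA}(S)$ in terms of $[\eta].X$ and correction terms $\int_X\Tr\left(\curvform_S\wedge A\wedge A^*\right)$, $\int_X\Tr\left(\left(A\wedge A^*\right)^2\right)$, $\int_X\Tr\left(D'A\wedge D''A^*\right)$ in the second fundamental form $A$, and similarly for $Q$. The proof (Pingali's Proposition~$3.1$, rephrased in Section~\ref{sec:positivity_stability} of the paper) then eliminates the $D'A\wedge D''A^*$ terms by adding the identities for $S$ and $Q$ (Lemma~\ref{lemma:trace_indentities}), and uses the positivity hypothesis pointwise --- together with the crucial corank-$1$ fact $\I^2\Tr\left(\left(A^*\wedge A\right)^2\right)\geq0$ --- to show that the remaining second-fundamental-form terms have a sign (Lemma~\ref{lemma:subsol1} and Corollary~\ref{cor:rank2_MAstability}), with equality forcing $A=0$ and hence a holomorphic splitting, contradicting indecomposability. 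None of these quadratic terms appear in your computation, so the issue is not the removable-singularity analysis at the finite set $Z$ (which is indeed also needed to pass from sub-bundles to saturated subsheaves): the core algebraic identity your argument would need is simply absent.
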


The paper is organised as follows: in Section~\ref{sec:Zcrit} we collect the definitions and some background on the~$\Zch$-critical and the Monge-Ampère vector bundle equations. Section~\ref{sec:Zstab_vbMAstab} is the heart of the paper and contains the proof of Theorem~\ref{thm:Zcrit_stability} and Theorem~\ref{thm:Zcrit_positivity}. We also comment on other stability notions that are closely related to~$\Zch$-stability, and show an openness result for the existence of~$\Zch$-critical metrics that allows to obtain many new examples starting from Mumford stable or Monge-Ampère stable bundles. Section~\ref{sec:examples} contains some (non-)examples of~$\Zch$-positive and~$\Zch$-critical metrics on bundles of rank~$2$ and~$3$ on~$\PP^2$. We also propose a generalisation of Conjecture~\ref{conj:Zstable} that includes decomposable bundles, leading to a notion of~$\Zch$-polystability, and we prove it for decomposable rank~$2$ bundles over surfaces. Finally, we show that our results can be used to deduce stability results for the almost Hermite-Einstein equation in non-asymptotic cases, showing that there is a non-asymptotic analogue of Gieseker stability.

\paragraph*{Acknowledgements.}
The authors wish to thank Ruadhaí Dervan, Annamaria Ortu, Vamsi Pingali, Lars Martin Sektnan, and Sohaib Khalid for some helpful remarks and discussions related to the present work, and we thank Gonçalo Oliveira for a useful conversation regarding Remark~\ref{rmk:positivity_noncritical} and Tristan Collins for pointing out an inaccuracy. The second author would like to thank the Isaac Newton Institute for Mathematical Sciences, Cambridge, for support and hospitality during the programme ``New equivariant methods in algebraic and differential geometry'' where work on this paper was undertaken. This work was supported by the EPSRC grant no EP/R014604/1, an NSERC Discovery Grant, and a FRQNT grant (team research project program).

\section{The Z-critical and vector bundle Monge-Ampère equations}\label{sec:Zcrit}\label{sec:vbMA}

In this section we recall the definitions and some basic properties of the~$\Zch$-critical and vector bundle Monge-Ampère equations. We refer the reader to~\cite{DervanMcCarthySektnan} and~\cite{McCarthy_thesis} for a more in-depth discussion of the former equation, and to~\cite{Pingali_vbMA} for the latter.

Let~$E\to X$ be a holomorphic vector bundle over a compact complex manifold~$X$ of complex dimension~$n$, and assume that~$X$ carries a K\"ahler metric~$\omega$. Given any affine connection~$D$ on~$E$, we denote the~$(1,0)$ and~$(0,1)$ parts of the connection by~$D'$ and~$D''$ respectively, and the curvature by~$F(D)\in\Alt^{1,1}(\End E)$. It will also be convenient to also define the \emph{normalised curvature form} of~$D$ as
\begin{equation}
\curvform(D):=\frac{\I}{2\pi}F(D).
\end{equation}
Given a Hermitian metric~$h$ on~$E$, we will also denote by~$\curvform(h)$ the normalised curvature of the Chern connection defined by~$h$ and the holomorphic structure of~$E$, which is a~$(1,1)$-form on~$X$ with values in the self-adjoint (with respect to~$h$) endormorphisms of~$E$. This normalisation is chosen so that the differential form
\begin{equation}
\Tr\e^{\curvform(h)}:=\Tr\left(\sum_{j=0}^{\rk(E)}\frac{1}{j!}\curvform(h)^j\right)
\end{equation}
represents the total Chern character of~$E$,~$\Chern(E)$. We denote by~$\Chern_d(E)$ the degree-$2d$ component of~$\Chern(E)$, which is represented by the~$(2d,2d)$-form~$\Tr\curvform^d/d!$.

The~$\Zch$-critical equation~\eqref{eq:Zcrit} as defined in~\cite{DervanMcCarthySektnan} depends on the choice of:
\begin{itemize}
\item a \emph{stability vector}~$\rho\in\CC^{n+1}$, i.e.~$n+1$ nonzero complex numbers~$(\rho_0,\dots,\rho_n)$ such that~$\Im(\rho_j/\rho_{j+1})>0$ for~$0\leq j<n$;
\item the representative~$u\in\Alt^{\bullet}(X,\RR)$ of a \emph{unipotent class}~$U=1+\sum_jU_j\in H^{\bullet}(X,\RR)$ such that~$U_j\in H^{j,j}(X,\RR)$ for each~$j$. 
\end{itemize}
Given these objects, Dervan-McCarthy-Sektnan~\cite{DervanMcCarthySektnan} define a~$\Alt^{n,n}(\End E)$-valued differential operator,
\begin{equation}\label{eq:Zdiff_operator}
\Zdiff(h)=\left[\left(\sum_{j=0}^n\rho_j\omega^j\right)\wedge u\wedge\e^{\curvform(h)}\right]^{\topdeg},
\end{equation}
where~$[\dots]^{\topdeg}$ indicates that one only has to consider the maximal-degree part of a differential form. This data also defines a \emph{polynomial central charge} as in~\cite{Bayer_centralcharge}
\begin{equation}\label{eq:centralcharge_definition}
\Zch_X(E)=\int_X\Tr\Zdiff(h),
\end{equation}
that does not depend on the choices of~$u\in U$ and~$h\in \Hermmetric(E)$. Assuming that~$\Zch(E)$ is nonzero, the \emph{$\Zch$-critical equation} for a Hermitian metric~$h\in\Hermmetric(E)$ (or the associated Chern connection~$D$) is
\begin{equation}
\Im(\e^{-\I\vartheta_E}\Zdiff(D))=0
\end{equation}
where the phase angle~$\vartheta_E$ is determined modulo~$2\pi$ through integration, i.e.
\begin{equation}
\Zch(E)=\e^{\I\vartheta_E}\RR_{>0}.
\end{equation}
\begin{rmk}\label{rmk:largevolumelimit}
    There are some minor differences between how the~$\Zch$-critical equation was introduced in~\cite{DervanMcCarthySektnan} and the one presented here, mainly due to the fact that in the original paper the authors focus on an asymptotic regime known as the \emph{large volume limit}. In that context, one rescales the K\"ahler form by~$\omega\mapsto k\,\omega$ and is interested in the properties of the~$\Zch$-critical equation~\eqref{eq:Zcrit} for~$k\gg0$. The stability vectors in~\cite{DervanMcCarthySektnan} then are required to satisfy different conditions than the ones we consider: they impose~$\Im(\rho_n)>0$ and~$\Im(\rho_{n-1}/\rho_{n})>0$, rather than~$\Im(\rho_j/\rho_{j+1})>0$ for~$0\leq j<n$. The condition~$\Im(\rho_n)>0$ however is just a choice of normalisation, one can ensure this by rotating the whole stability vector without affecting the stability of the bundle nor the existence of critical metrics. The reason why in~\cite{DervanMcCarthySektnan} only the last condition of~$\Im(\rho_{j}/\rho_{j+1})>0$ is required, is that this is the minimum necessary to guarantee that any asymptotically~$\Zch$-stable bundle (see Section~\ref{sec:stabilities}) is Mumford semistable. From our point of view however, it is more natural to have the same assumptions as in the original definition of a polynomial stability condition, see~\cite{Bayer_centralcharge}.
\end{rmk}
\begin{rmk}
Here, as in~\cite{DervanMcCarthySektnan}, by ``imaginary part''~$\Im(A)$ of an endomorphism-valued form~$A\in\Alt^{\bullet}(X,\End E)$ we mean ($-\I$ times) the \emph{anti-self-adjoint} component of the endomorphism,~$\Im(A)=\frac{1}{2\I}\left(A-A^*\right)$ with respect to the Hermitian metric on~$E$. Similarly, the ``real part'' indicates the self-adjoint component. When the Hermitian bundle has rank one, these indeed coincide with the real and imaginary parts of a complex-valued~$1$-form. With this convention then, if~$D$ is the Chern connection of a Hermitian bundle~$(E,h)$, one has~$\Im(\curvform(D))=0$ and~$\Re(\curvform(D))=\curvform(D)$.
\end{rmk}

When~$X$ is a complex surface, the general~$\Zch$-critical operator is, for~$h\in\Hermmetric(E)$,
\begin{equation}\label{eq:Zdiff_surf_general}
\begin{split}
    \Zdiff(h)=&\left[(\rho_0+\rho_1\omega+\rho_2\omega^2)\wedge(1+u_1+u_2)\wedge\left(\id_E+\curvform(h)+\frac{1}{2}\curvform(h)^2\right)\right]^\topdeg\\
    =&\rho_0u_2+\rho_1\omega\wedge u_1+\rho_2\omega^2+\left(\rho_0u_1+\rho_1\omega\right)\wedge\curvform(h)+\frac{1}{2}\rho_0\curvform(h)^2
\end{split}
\end{equation}
and the corresponding~$\Zch$-charge is given by integrating the trace of~\eqref{eq:Zdiff_surf_general} on~$X$:
\begin{equation}\label{eq:Zcritical_surface_original}
    \Zch_X(E)=\left(\rho_0U_2+\rho_1U_1.[\omega]+\rho_2[\omega]^2\right)\rk(E)+\left(\rho_0U_1+\rho_1[\omega]\right).\Chern_1(E)+\rho_0\Chern_2(E).
\end{equation}
The expression~\eqref{eq:Zcritical_surface_original} for the central charge allows us to define~$\Zch_X(E)$ for a coherent sheaf that is not necessarily locally free. Similarly, if~$E\to V$ is a coherent sheaf on a (reduced, irreducible) curve~$V\subset X$, we define~$\Zch_V(E)$ as
\begin{equation}
    \Zch_V(E)=\rho_1\,[\omega].V\,\rk(E)+\rho_0\,U_1.V\,\rk(E)+\rho_0\,\chern_1(E).V
\end{equation}
where~$\chern_1(E).V$ denotes the degree of~$E$ over~$V$, defined as
\begin{equation}
    \deg_V(E):=\chi(V,E)-\chi(V,\mathcal{O}_V)=\chi(V,E)+g-1,
\end{equation}
where~$g$ is the genus of~$V$. Note that in the smooth projective setting, one can also use a finite resolution by locally free sheaves to define the Chern classes (and thus the degree over a smooth curve) of a coherent sheaf, see~\cite[p. 29]{Friedman}.

\smallbreak

As~$\curvform(h)$ is self-adjoint with respect to the metric~$h$ and~$U$ is a real class, the~$\Zch$-critical equation can be written as
\begin{equation}\label{eq:Zcritical_surface}
    \alpha\,\curvform(h)^2+\beta\wedge\curvform(h)+\gamma\otimes\id_E=0
\end{equation}
where~$\alpha\in\RR$,~$\beta\in\Alt^{1,1}(X,\RR)$ and~$\gamma\in\Alt^{2,2}(X,\RR)$ are defined as
\begin{equation}\label{eq:abc_coeff}
    \begin{dcases}
        \alpha=\frac{1}{2}\Im\left(\e^{-\I\vartheta_E}\rho_0\right)\\
        \beta=\Im\left(\e^{-\I\vartheta_E}\left(\rho_0u_1+\rho_1\omega\right)\right)\\
        \gamma=\Im\left(\e^{-\I\vartheta_E}\left(\rho_0u_2+\rho_1\omega\wedge u_1+\rho_2\omega^2\right)\right).
    \end{dcases}
\end{equation}
Note that the coefficients~$\beta$ and~$\gamma$ are~$\wedge$-commuting with~$\curvform(h)$, so the~$\Zch$-critical equation~\eqref{eq:Zcritical_surface} is equivalent to
\begin{equation}\label{eq:Zcrit_vbMA}
    \left(\curvform(h)+\frac{\beta}{2\alpha}\otimes\id_E\right)^2=\left(\left(\frac{\beta}{2\alpha}\right)^2-\frac{\gamma}{\alpha}\right)\otimes\id_E.
\end{equation}
We are assuming here that~$\alpha\not=0$, the case~$\alpha=0$ essentially reduces to the Hermite-Einstein problem and will be treated separately in Section~\ref{sec:alphazero}.
\begin{rmk}\label{rmk:strong_Zpositive}
    The condition~$\alpha>0$ can also be rephrased as~$\Im\left(\overline{\Zch_X(E)}\rho_0\right)>0$.
    Note that~$\Zch_{\{x\}}(E_x)=\rk(E)\rho_0$ for any point~$x\in X$, hence \emph{a bundle satisfies condition~\eqref{eq:Z_alphapositive} if and only if~$\alpha>0$}. This observation will be used repeatedly in what follows, in particular for the proof of Theorem~\ref{thm:Zcrit_positivity}.
\end{rmk}

The simple expression~\eqref{eq:Zcrit_vbMA} for the~$\Zch$-critical equation shows
\begin{lemma}\label{lemma:equation_equivalence}
    With the previous notation, let~$\eta:=(\beta/2\alpha)^2-\gamma/\alpha$. Then, the~$\Zch$-critical equation is equivalent to the \emph{twisted vector bundle Monge-Ampère equation}
    \begin{equation}\label{eq:vbMA_twisted}
        \left(\curvform(h)+\frac{\beta}{2\alpha}\otimes\id_E\right)^2=\eta\otimes\id_E.
    \end{equation}
\end{lemma}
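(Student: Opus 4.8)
The plan is to derive the twisted vector bundle Monge-Amp\`ere equation \eqref{eq:vbMA_twisted} directly from the surface form \eqref{eq:Zcritical_surface} of the $\Zch$-critical equation by completing the square, and to note that the manipulation is reversible. The main thing to be careful about is that $\beta\in\Alt^{1,1}(X,\RR)$ and $\gamma\in\Alt^{2,2}(X,\RR)$ $\wedge$-commute with the $\End E$-valued $(1,1)$-form $\curvform(h)$: a real $(1,1)$-form has even total degree, so moving it past another $(1,1)$-form costs the sign $(-1)^{2\cdot2}=1$, and its scalar coefficients act as multiples of $\id_E$ and hence commute with the matrix-valued coefficients of $\curvform(h)$. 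This is exactly the observation recorded just after \eqref{eq:abc_coeff}, and it lets us treat \eqref{eq:Zcritical_surface} as an ordinary quadratic identity in the ``variable'' $\curvform(h)$.

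Granting this, I would divide \eqref{eq:Zcritical_surface} through by $\alpha$ (legitimate since $\alpha\neq0$) and complete the square using the identity $\bigl(\curvform(h)+\tfrac{\beta}{2\alpha}\otimes\id_E\bigr)^2=\curvform(h)^2+\tfrac{\beta}{\alpha}\wedge\curvform(h)+\bigl(\tfrac{\beta}{2\alpha}\bigr)^2\otimes\id_E$. Rearranging then turns \eqref{eq:Zcritical_surface} into $\bigl(\curvform(h)+\tfrac{\beta}{2\alpha}\otimes\id_E\bigr)^2=\bigl(\bigl(\tfrac{\beta}{2\alpha}\bigr)^2-\tfrac{\gamma}{\alpha}\bigr)\otimes\id_E$, which is \eqref{eq:Zcrit_vbMA}; by the definition $\eta:=(\beta/2\alpha)^2-\gamma/\alpha$ this is precisely \eqref{eq:vbMA_twisted}. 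Since $\alpha\neq0$, each step is an equivalence, so conversely \eqref{eq:vbMA_twisted} implies \eqref{eq:Zcritical_surface}, giving the asserted equivalence.

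In effect the lemma only attaches the name $\eta$ to the right-hand side of \eqref{eq:Zcrit_vbMA}; the real content was already present in passing from the general expression \eqref{eq:Zdiff_surf_general} to \eqref{eq:Zcritical_surface} and then to \eqref{eq:Zcrit_vbMA}. Consequently there is no substantive obstacle here. The only point worth flagging for later use is that nothing in this argument forces $\eta$ to be a positive $(2,2)$-form: positivity of the twisted curvature and of $\eta$ is the separate matter governed by the $\Zch$-positivity condition introduced in Section~\ref{sec:subsolutions}.
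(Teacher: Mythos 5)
Your proof is correct and follows exactly the paper's route: the paper derives \eqref{eq:Zcrit_vbMA} from \eqref{eq:Zcritical_surface} by the same completion of the square, using that $\beta$ and $\gamma$ $\wedge$-commute with $\curvform(h)$ and that $\alpha\neq0$, and then the lemma is just the definition of $\eta$. Your added remarks on why the commutation holds and on $\eta$ not being automatically positive are accurate but not needed for the equivalence itself.
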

When~$\eta$ is a volume form and~$\beta=0$ (or~$\beta/2\alpha\in\chern_1(N)$ for a line bundle~$N$), this equation was introduced and studied by Pingali in~\cite{Pingali_vbMA}. As we mentioned in the Introduction, the existence of solutions of~\eqref{eq:vbMA} is tied to the \emph{Monge-Ampère stability} of the bundle, at least in certain regimes, see Theorem~\ref{thm:Pingali_stab}. We introduce a version of this condition that takes into account the possible presence of a non-vanishing form~$\beta$. It will be useful in what follows to let
\begin{equation}
    \curvform_{\alpha,\beta}(h):=\curvform(h)+\frac{\beta}{2\alpha}\otimes\id_E.
\end{equation}
Generalising Pingali's Monge-Ampère slope, we define for~$\vartheta:=[\beta/2\alpha]\in H^{1,1}(X,\RR)$
\begin{equation}
    \mu_{MA,\vartheta}(E):=\frac{\Chern_2(E)}{\rk{E}}+\frac{\Chern_1(E).\vartheta}{\rk(E)}=\mu_{MA}(E)+\mu_{\vartheta}(E)
\end{equation}
where the Chern characters of~$S$ are defined through a vector bundle resolution of~$S$, and~$\mu_{MA}$ denotes the original Monge-Ampère slope of~\cite{Pingali_vbMA}.
\begin{definition}[\cite{Pingali_vbMA}]\label{MAstabl}
    For a real~$(1,1)$-class~$\vartheta$ and a vector bundle~$E$ on a K\"ahler surface~$X$, we say that~$E$ is \emph{$\vartheta$-Monge-Ampère stable} with respect to~$\vartheta$ if for every coherent saturated subsheaf~$S\subset E$ such that~$0<\rk(S)<\rk(E)$ we have
    \begin{equation}\label{eq:MA_stability}
        \mu_{MA,\vartheta}(S)<\mu_{MA,\vartheta}(E).
    \end{equation}
\end{definition}
In the next section, we will show that this twisted Monge-Amp\`ere stability is in fact equivalent to~$\Zch$-stability under the correspondence of Lemma~\ref{lemma:equation_equivalence}, at least when~$\alpha>0$. Note also that the vector bundle Monge-Ampère equation for a fixed vector bundle~$E$ can in fact be recast as a~$\Zch$-critical equation by choosing a stability vector~$\rho$ that depends on the Chern numbers of~$E$. More explicitly, let~$d=\deg(E)$ and~$c_2=\Chern_2(E)=\rk(E)$, and consider the stability vector~$\rho=\big(-1,\,c_2\I,\,1+c_2(1-d)\I\big)$. Then if~$\Zch^\rho$ is the polynomial central charge defined by~$\rho$ and a trivial unitary class, the~$\Zch^\rho$-critical equation is equivalent to the vector bundle Monge-Ampère equation~$\curvform(h)^2=2\,\omega^2\otimes\id_E$, for which the condition~$c_2=\rk(E)$ is necessary for the existence of solutions.

This shows that the (twisted) vector bundle Monge-Ampère equation is, in some sense, the main equation of interest for vector bundles on K\"ahler surfaces. The~$\Zch$-critical equation and the vector bundle Monge-Ampère equation seem otherwise unrelated on higher dimensional manifolds. We hope this work will stimulate further research on the vector bundle Monge-Ampère equation.

\subsection{Some positivity conditions for Hermitian metrics}\label{sec:subsolutions}

A problematic feature of both the vector bundle Monge-Ampère equation~\eqref{eq:Zcrit} and the~$\Zch$-critical equation~\eqref{eq:vbMA} is that they might fail to be elliptic. To characterise the situations in which the~$\Zch$-critical equation is elliptic at least near a solution, we can consider either the linearisation of the operator~$D\mapsto\e^{\curvform(D)}$, defined on the space of affine connections compatible with a fixed Hermitian metric~$h_0$, or the linearisation of~$h\mapsto\e^{\curvform(h)}$ defined on~$\Hermmetric(E)$. When we take a path of Hermitian metrics~$h_t$ or a path of connections~$D_t$ and consider the variation of~$\e^{\curvform_t}$, one finds over a complex dimension~$n$ manifold, 
\begin{equation}
\partial_{t=0}\e^{\curvform_t}=\partial_{t=0}\sum_{j=1}^n\frac{1}{j!}\curvform_t^j=\sum_{j=1}^n\frac{1}{j!}\sum_{p=1}^{j}\curvform_0^{p-1}\left(\partial_{t=0}\curvform_t\right)\curvform_0^{j-p}.
\end{equation}

We introduce a multilinear product on~$\Alt^\bullet(\End E)$ as
\begin{equation}
\left[A_1\wedge\dots\wedge A_j\right]_{\sym}:=\frac{1}{j!}\sum_{\sigma\in S_j}(-1)^{\mathrm{gr\,sgn}(\sigma)}A_{\sigma(1)}\wedge\dots\wedge A_{\sigma(j)}
\end{equation}
where the graded sign of a permutation,~$\mathrm{gr\,sgn}(\sigma)$, is defined as the sign obtained by permuting~$A_1\wedge\dots A_j$ to~$A_{\sigma(1)}\wedge\dots\wedge A_{\sigma(j)}$ as differential forms (so, ignoring~$\End E$ factors). Then the derivative of~$\e^{\curvform_t}$ can be rewritten as
\begin{equation}
\partial_{t=0}\e^{\curvform_t}=\sum_{j=1}^n\frac{1}{(j-1)!}\big[\underbrace{\curvform_0\wedge\dots\wedge\curvform_0}_{j-1\mbox{ times}}\wedge\partial_{t=0}\curvform_t\big]_{\sym}=\big[\e^{\curvform_t}\wedge\partial_{t=0}\curvform_t\big]_{\sym}
\end{equation}
so that the derivative of~$\Zdiff$, say along a path of connections~$D_t$, can be written as
\begin{equation}
\partial_{t=0}\Zdiff(D_t)=\left[\left[\left(\sum\nolimits_j\rho_j\omega^j\right)\wedge u\wedge\e^{\curvform(h_t)}\right]^{n-1,n-1}\wedge\partial_{t=0}\curvform(D_t)\right]_{\sym}.
\end{equation}
It seems reasonable then to let~$\Zdiff'(D):=\left[\left(\sum\nolimits_j\rho_j\omega^j\right)\wedge u\wedge\e^{\curvform(D)}\right]^{n-1,n-1}$. The differential of~$\Zdiff$ along a path of holomorphic structures~$D''_t$ on~$E$ generated by the action of an element of the complexified gauge group~$V\in\I\Alt^0(X,\End(E,h))$ then becomes
\begin{equation}\label{eq:derivative_Zdiff}
\partial_{t=0}\Zdiff(D(h,D''_t))=\frac{\I}{2\pi}\left[\Zdiff(D)'\wedge\,(D''D'-D'D'')V\right]_{\sym}.
\end{equation}
This discussion suggests to impose an additional positivity condition on Hermitian metrics on~$E$ to guarantee the ellipticity of the equation. 
\begin{definition}\label{ZposMetric}
Given a polynomial central charge~$Z$ on a holomorphic vector bundle~$E$, we say that a Hermitian metric~$h$ (or its Chern connection) is \emph{$\Zch$-positive} (also called (strongly) \emph{$\Zch$-subsolution} in~\cite{DervanMcCarthySektnan}) if the~$2(n-1)$~$\End(E)$-valued form~$\Im(\e^{-\I\vartheta_E}\Zdiff'(h))$ is positive definite; i.e. for any~$x\in X$ and any~$\xi\in T^{0,1}_x{}^*X\times\End(E_x)$
\begin{equation}\label{eq:subsolution}
\I\Tr\left[\Im\left(\e^{-\I\vartheta_E}\Zdiff'(h)\right)\wedge\xi^*\wedge\xi\right]_{\sym}>0
\end{equation}
where~$\Zdiff'(h)$ is the formal derivative of~$\Zdiff(h)$ with respect to~$\curvform(h)$.
\end{definition}
This subsolution condition is crucial for the moment map interpretation of the~$\Zch$-critical equation in~\cite{DervanMcCarthySektnan}; indeed, the equation can be shown to be coming from a Hamiltonian action on the space of connections that are unitary with respect to a fixed Hermitian metric~$h_0$ and \emph{satisfy the subsolution condition}; the symplectic form on this space is given by the Hermitian pairing
\begin{equation}\label{eq:momentmap_pairing}
\langle a,b\rangle=-\I\int_X\Tr\left[\Im\left(\e^{-\I\vartheta_E}\Zdiff'(h)\right)\wedge a\wedge b^*\right]_{\sym}>0
\end{equation}
for~$a,b\in\Alt^{0,1}\End E$, which is positive provided that the Chern connection of~$h$ is a~$\Zch$-subsolution: this also partially motivates our alternative naming of \emph{$\Zch$-positive} metric for~$h\in\Hermmetric(E)$ satisfying~\eqref{eq:subsolution}.

There is a similar positivity notion for the vector bundle Monge-Ampère equation, that we adapt from~\cite{Pingali_vbMA} to include the possible presence of a nontrivial twist.
\begin{definition}
    Given a nonzero~$\alpha\in\RR$ and a~$(1,1)$-form~$\beta$, a metric~$h\in\Hermmetric(E)$ is said to be \emph{twisted Monge-Ampère positive} with respect to~$\alpha$ and~$\beta$ if for any~$x\in X$ and any~$\xi\in T^{0,1}_x{}^*X\times\End(E_x)$
    \begin{equation}\label{eq:MA_positivity}
        \sum_{k=0}^{n-1}\I\Tr\left[ \xi^*\wedge\curvform_{\alpha,\beta}(h)^k\wedge \xi\wedge\curvform_{\alpha,\beta}(h)^{n-k-1} \right]>0.
    \end{equation}
\end{definition}
Note that these positivity conditions are trivial if~$X$ is a curve. The direct computation shows that in the~$2$-dimensional case,~$\Zch$-positivity and twisted Monge-Ampère positivity are equivalent under the correspondence of Lemma~\ref{lemma:equation_equivalence}, if~$\alpha$ is positive. Indeed, on a complex surface we have, with the notation of~\eqref{eq:abc_coeff},
\begin{equation}
    \Im\left(\e^{-\I\vartheta_E}\Zdiff'\right)=\Im\left(\e^{-\I\vartheta_E}\left(\rho_0\curvform+(\rho_0u_1+\rho_1\omega)\otimes\id_E\right)\right)=2\alpha\curvform+\beta\otimes\id_E
\end{equation}
so that
\begin{equation}\label{eq:positivity_comparison}
\begin{split}
    \I\Tr\left[\Im\left(\e^{-\I\vartheta_E}\Zdiff'(h)\right)\wedge\xi^*\wedge\xi\right]_{\sym}=&\I\Tr\left[\left(2\alpha\,\curvform(h)+\beta\otimes\id_E\right)\wedge\xi^*\wedge\xi\right]_{\sym}\\
    =&\alpha\I\Tr\left[\xi^*\wedge\xi\wedge\curvform_{\alpha,\beta}(h)+\xi^*\wedge\curvform_{\alpha,\beta}(h)\wedge\xi\right].
\end{split}
\end{equation}
If instead~$\alpha<0$, then~$h$ is~$\Zch$-positive if and only if it is Monge-Ampère \emph{negative}, i.e. for any~$x\in X$ and any nonzero~$\xi\in T^{0,1}_x{}^*X\times\End(E_p)$
\begin{equation}
    \I\Tr\left[\xi^*\wedge\xi\wedge\curvform_{\alpha,\beta}(h)+\xi^*\wedge\curvform_{\alpha,\beta}(h)\wedge\xi\right]<0.
\end{equation}

A consequence of this discussion is that, under the positivity condition~\eqref{eq:Z_alphapositive}, any result obtained for the vector bundle Monge-Ampère equation for a bundle on a K\"ahler surface can be almost immediately translated to a statement for the~$\Zch$-critical equation. As an example of this phenomenon, we can obtain a Chern class inequality for bundles that admit a~$\Zch$-critical metric from the analogous inequality in~\cite[Theorem~$1.2$]{Pingali_vbMA}.
\begin{prop}\label{prop:Bogo-analytic}
    Let~$X$ be a compact K\"ahler surface. For any polynomial central charge~$\Zch$ and any rank~$2$ bundle~$E\to X$ that satisfies~\eqref{eq:Z_alphapositive}, if~$E$ admits a~$\Zch$-positive and~$\Zch$-critical Hermitian metric and satisfies the volume form hypothesis~\eqref{eq:volumeform_hyp} then one has the Bogomolov inequality
    \begin{equation}\label{eq:Bogo}
        \chern_1(E)^2\leq 4\chern_2(E).
    \end{equation}
    Moreover, the equality holds if and only if~$E$ is projectively flat.
\end{prop}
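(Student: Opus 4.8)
The plan is to read the correspondence of Lemma~\ref{lemma:equation_equivalence} backwards and then invoke Pingali's Chern class inequality~\cite[Theorem~$1.2$]{Pingali_vbMA}. Since~$E$ satisfies~\eqref{eq:Z_alphapositive}, Remark~\ref{rmk:strong_Zpositive} gives~$\alpha>0$, so by Lemma~\ref{lemma:equation_equivalence} a~$\Zch$-critical metric~$h$ satisfies the vector bundle Monge--Amp\`ere equation~$\curvform_{\alpha,\beta}(h)^2=\eta\otimes\id_E$ with~$\eta=(\beta/2\alpha)^2-\gamma/\alpha$, and the volume form hypothesis~\eqref{eq:volumeform_hyp} guarantees that~$\eta$ is a positive volume form. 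Write~$\hat{\curvform}:=\curvform_{\alpha,\beta}(h)=\curvform(h)+\tfrac{\beta}{2\alpha}\otimes\id_E$; this is a smooth, self-adjoint,~$D$-closed~$\End(E)$-valued~$(1,1)$-form (as both~$\curvform(h)$ and the real form~$\beta$ are closed). By the identity~\eqref{eq:positivity_comparison} and~$\alpha>0$, the~$\Zch$-positivity of~$h$ is exactly the twisted Monge--Amp\`ere positivity~\eqref{eq:MA_positivity} for~$\hat{\curvform}$; testing~\eqref{eq:MA_positivity} with~$\xi=\nu\otimes\id_E$ for an arbitrary~$(0,1)$-form~$\nu$ gives~$\I\,\bar{\nu}\wedge\nu\wedge\Tr\hat{\curvform}>0$, whence~$\Tr\hat{\curvform}>0$ as a~$(1,1)$-form. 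Thus~$\hat{\curvform}$ meets the hypotheses of~\cite[Theorem~$1.2$]{Pingali_vbMA} (compare the trace hypothesis of Theorem~\ref{thm:Pingali_stab}): it solves a vector bundle Monge--Amp\`ere equation with volume-form right-hand side and has positive trace.

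The only delicate point is that~$\hat{\curvform}$ is not itself the Chern curvature of a Hermitian holomorphic bundle, but the twist of~$\curvform(h)$ by the closed real~$(1,1)$-form~$\tfrac{\beta}{2\alpha}\otimes\id_E$. If~$\vartheta:=[\beta/2\alpha]$ is a rational class, one picks a Hermitian line bundle~$(N,h_N)$ with Chern form~$\tfrac{\beta}{2\alpha}$, so that~$\hat{\curvform}=\curvform(h\otimes h_N)$ and~\cite[Theorem~$1.2$]{Pingali_vbMA} applies verbatim to~$E\otimes N$, giving~$\chern_1(E\otimes N)^2\leq 4\chern_2(E\otimes N)$ with equality if and only if~$E\otimes N$ is projectively flat. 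In general I would point out that the proof of~\cite[Theorem~$1.2$]{Pingali_vbMA} is Chern--Weil-theoretic: it uses only that the form is a smooth self-adjoint~$\End(E)$-valued closed~$(1,1)$-form solving~$\hat{\curvform}^2=\eta\otimes\id_E$ with~$\eta$ a volume form and~$\Tr\hat{\curvform}>0$, and it nowhere uses integrality of~$\vartheta$ or the existence of a holomorphic structure inducing~$\hat{\curvform}$; the same argument then yields~$\int_X\bigl(\tfrac14(\Tr\hat{\curvform})^2-\tfrac12\Tr\hat{\curvform}^2\bigr)\geq 0$, with equality if and only if the trace-free part~$\hat{\curvform}_0$ of~$\hat{\curvform}$ vanishes. (Alternatively, one approximates~$\vartheta$ by rational classes and passes to the limit.)

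It remains to translate back. Replacing~$\curvform(h)$ by~$\curvform(h)+\vartheta\otimes\id_E$ multiplies the total Chern character by~$\e^{\vartheta}$, and a short computation shows that~$\tfrac14\Chern_1^2-\Chern_2$ is invariant under~$\Chern(E)\mapsto\Chern(E)\e^{\vartheta}$ for a rank-$2$ class (equivalently, the rank-$2$ Bogomolov discriminant does not change under a line bundle twist); hence~$\int_X\bigl(\tfrac14(\Tr\hat{\curvform})^2-\tfrac12\Tr\hat{\curvform}^2\bigr)=\tfrac14\chern_1(E)^2-\Chern_2(E)$, so~$\tfrac14\chern_1(E)^2-\Chern_2(E)\geq 0$, which by~$\Chern_2(E)=\tfrac12\chern_1(E)^2-\chern_2(E)$ is precisely~\eqref{eq:Bogo}. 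Likewise~$\hat{\curvform}_0$ coincides with the trace-free part of~$\curvform(h)$, since~$\tfrac{\beta}{2\alpha}\otimes\id_E$ is pure trace; so equality in~\eqref{eq:Bogo} holds if and only if~$\curvform(h)$ has vanishing trace-free part, i.e. if and only if~$(E,h)$---and hence~$E$---is projectively flat. The main obstacle is precisely the bookkeeping around the twist~$\beta$: once one is convinced that Pingali's argument is insensitive to it (or reduces to the untwisted case by density of rational classes), everything else is a routine translation through Lemma~\ref{lemma:equation_equivalence}, the positivity comparison~\eqref{eq:positivity_comparison}, and the line-bundle-twist invariance of the discriminant.
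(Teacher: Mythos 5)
Your proposal is correct and follows essentially the same route as the paper: pass to the twisted vector bundle Monge--Amp\`ere equation via Lemma~\ref{lemma:equation_equivalence}, use the equivalence of $\Zch$-positivity with twisted Monge--Amp\`ere positivity (which gives $\Tr\curvform_{\alpha,\beta}(h)>0$), and observe that Pingali's Chern--Weil argument applies verbatim to the twisted curvature endomorphism. The paper simply cites \cite[Proposition~$3.12$]{Pingali_vbMA} for this last step, whereas you additionally spell out why the twist is harmless (invariance of the rank-$2$ discriminant under $\Chern(E)\mapsto\Chern(E)\e^{\vartheta}$), which is a correct and worthwhile elaboration of the same idea.
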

\begin{proof}
    Under our hypothesis, the~$\Zch$-positivity and the (twisted) vector bundle Monge-Ampère positivity of~$h$ are equivalent, so it is sufficient to show the inequality if~$h$ is a solution of the twisted Monge-Ampère equation and is twisted Monge-Ampère positive. This is essentially already shown in the proof of~\cite[Proposition~$3.12$]{Pingali_vbMA}: indeed, the argument in~\cite[Proposition~$3.12$]{Pingali_vbMA} goes through almost verbatim when substituting the twisted curvature endomorphism~$\curvform(h)+\beta/2\alpha\otimes\id_E$ for~$\Theta$. 
\end{proof}

\section{Z-stability and Monge-Ampère stability}\label{sec:Zstab_vbMAstab}

Our first result allows us to compare the inequalities~\eqref{eq:MA_stability} and~\eqref{eq:Zstable}. As a corollary, we see that~$\Zch$-stability and twisted Monge-Amp\`ere stability are equivalent if~$\alpha>0$. All the results in this section hold under the assumption that~$X$ is a surface.
\begin{prop}\label{prop:stability_comparison}
    For any subsheaf~$S\subset E$, under the correspondence of Lemma~\ref{lemma:equation_equivalence} we have
    \begin{equation}
        \Im\left(\Zch_X(S)\,\overline{\Zch_X(E)}\right)=2\alpha\,\rk(S)\abs{\Zch_X(E)}\big(\mu_{MA,\vartheta}(S)-\mu_{MA,\vartheta}(E)\big).
    \end{equation}
\end{prop}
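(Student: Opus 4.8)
The plan is to unwind both sides of the claimed identity directly from the definitions; it is essentially a bookkeeping computation, with one small observation at the end.

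First I would use the definition of the phase $\vartheta_E$: since $\Zch_X(E) = \e^{\I\vartheta_E}\abs{\Zch_X(E)}$, we have $\overline{\Zch_X(E)} = \abs{\Zch_X(E)}\,\e^{-\I\vartheta_E}$, hence
\[
\Im\bigl(\Zch_X(S)\,\overline{\Zch_X(E)}\bigr) = \abs{\Zch_X(E)}\,\Im\bigl(\e^{-\I\vartheta_E}\Zch_X(S)\bigr).
\]
So it suffices to show that $\Im\bigl(\e^{-\I\vartheta_E}\Zch_X(S)\bigr) = 2\alpha\,\rk(S)\bigl(\mu_{MA,\vartheta}(S)-\mu_{MA,\vartheta}(E)\bigr)$, with $\alpha,\beta,\gamma$ as in \eqref{eq:abc_coeff} and $\vartheta=[\beta/2\alpha]$.

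Next I expand $\Zch_X(S)$ via the surface formula \eqref{eq:Zcritical_surface_original} (valid for coherent sheaves, with $\Chern_1(S),\Chern_2(S)$ computed from a locally free resolution as explained after \eqref{eq:Zcritical_surface_original}), multiply by $\e^{-\I\vartheta_E}$, and take imaginary parts. Comparing with \eqref{eq:abc_coeff}, and using that $\int_X\gamma = \Im\bigl(\e^{-\I\vartheta_E}(\rho_0 U_2 + \rho_1 U_1.[\omega] + \rho_2[\omega]^2)\bigr)$ and, in cohomology, $[\beta] = \Im\bigl(\e^{-\I\vartheta_E}(\rho_0 U_1 + \rho_1[\omega])\bigr)$, while $2\alpha = \Im\bigl(\e^{-\I\vartheta_E}\rho_0\bigr)$, one obtains
\[
\Im\bigl(\e^{-\I\vartheta_E}\Zch_X(S)\bigr) = \rk(S)\int_X\gamma + [\beta].\Chern_1(S) + 2\alpha\,\Chern_2(S).
\]
Since $\vartheta = [\beta/2\alpha]$, the last two terms combine into $2\alpha\,\rk(S)\,\mu_{MA,\vartheta}(S)$, so
\[
\Im\bigl(\e^{-\I\vartheta_E}\Zch_X(S)\bigr) = \rk(S)\Bigl(\int_X\gamma + 2\alpha\,\mu_{MA,\vartheta}(S)\Bigr).
\]

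Finally I apply this last identity with $S$ replaced by $E$: its left-hand side vanishes, because $\e^{-\I\vartheta_E}\Zch_X(E)\in\RR_{>0}$ by construction of $\vartheta_E$, so $\int_X\gamma = -2\alpha\,\mu_{MA,\vartheta}(E)$. Substituting this back and multiplying through by $\abs{\Zch_X(E)}$ yields the stated formula. I do not expect a genuine obstacle here: the computation is routine, the only points requiring care being that the integrals of the chosen representatives of $\beta$ and $\gamma$ recover the topological intersection numbers appearing in \eqref{eq:Zcritical_surface_original}, and that $\Chern_\bullet(S)$ for a possibly non-locally-free $S$ is taken with respect to a finite locally free resolution. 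The one mildly clever step is to eliminate the unknown constant $\int_X\gamma$ by testing the general identity against $E$ itself.
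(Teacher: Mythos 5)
Your proof is correct and is essentially the same computation as the paper's: both expand $\Zch_X(S)$ via \eqref{eq:Zcritical_surface_original}, identify the coefficients with $2\alpha$, $[\beta]$, $\int_X\gamma$ from \eqref{eq:abc_coeff}, and eliminate the rank-proportional term by using $\Im\bigl(\e^{-\I\vartheta_E}\Zch_X(E)\bigr)=0$ (the paper phrases this as $\Im\bigl(\overline{\Zch_X(E)}\Zch_X(E)\bigr)=0$, which is your "test against $E$ itself" step). The only cosmetic difference is that you factor out $\abs{\Zch_X(E)}$ at the start rather than carrying $\overline{\Zch_X(E)}$ through the computation.
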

\begin{proof}
    It is just a matter of carefully computing each term. To start, recall from the definition that
    \begin{equation}\label{eq:MA_slopes'}
        \mu_{MA,\vartheta}(S)-\mu_{MA,\vartheta}(E)=\frac{\Chern_2(S)}{\rk(S)}-\frac{\Chern_2(E)}{\rk(E)}+\left(\frac{\Chern_1(S)}{\rk(S)}-\frac{\Chern_1(E)}{\rk(E)}\right).\frac{[\beta]}{2\alpha}.
    \end{equation}
    Note from~\eqref{eq:abc_coeff} that the class of~$\beta$ is
    \begin{equation}
        [\beta]=\Im\left(\e^{-\I\vartheta_E}\left(\rho_0U_1+\rho_1[\omega]\right)\right)=\abs{\Zch_X(E)}^{-1}\Im\left(\overline{\Zch_X(E)}(\rho_0U_1+\rho_1[\omega])\right).
    \end{equation}
    We proceed to the computation of~$\Im\left(\Zch_X(S)\,\overline{\Zch_X(E)}\right)$. By definition, we have
    \begin{equation}
        \begin{split}
            \Zch_X(E)=\left(\rho_0U_2+\rho_1U_1.[\omega]+\rho_2[\omega]^2\right)\rk(E)+\left(\rho_0U_1+\rho_1[\omega]\right).\Chern_1(E)+\rho_0\Chern_2(E),\\
         \Zch_X(S)=\left(\rho_0U_2+\rho_1U_1.[\omega]+\rho_2[\omega]^2\right)\rk(S)+\left(\rho_0U_1+\rho_1[\omega]\right).\Chern_1(S)+\rho_0\Chern_2(S).
        \end{split}
    \end{equation}
    The direct computation gives
    \begin{equation}\label{eq:imaginaryratio_subsheaf}
        \begin{split}
            \Im\left(\Zch_X(S)\,\overline{\Zch_X(E)}\right)=&\Im\left[\overline{\Zch_X(E)}\left(\rho_0U_2+\rho_1U_1.[\omega]+\rho_2[\omega]^2\right)\right]\rk(S)+\\
            &+\Im\left[\overline{\Zch_X(E)}\left(\rho_0U_1+\rho_1[\omega]\right)\right].\Chern_1(S)+\Im\left[\overline{\Zch_X(E)}\rho_0\right]\Chern_2(S).
        \end{split}
    \end{equation}
    As the only non-real quantities in~$\Zch_X(E)$ are the coefficients~$\rho_i$, one finds
    \begin{equation}
        \begin{split}
            &\Im\left[\overline{\Zch_X(E)}\left(\rho_0U_2+\rho_1U_1.[\omega]+\rho_2[\omega]^2\right)\right]=\\
            =&\frac{1}{\rk(E)}\Im\left[\overline{\Zch_X(E)}\Zch_X(E)\right]-\frac{1}{\rk(E)}\Im\left[\overline{\Zch_X(E)}\Big(\left(\rho_0U_1+\rho_1[\omega]\right).\Chern_1(E)+\rho_0\Chern_2(E)\Big)\right]=\\
            =&-\frac{1}{\rk(E)}\Im\left[\overline{\Zch_X(E)}\left(\rho_0U_1+\rho_1[\omega]\right).\Chern_1(E)+\overline{
            \Zch_X(E)}\rho_0\Chern_2(E)\right]=\\
            =&-\frac{1}{\rk(E)}\Im\left[\overline{\Zch_X(E)}\left(\rho_0U_1+\rho_1[\omega]\right)\right].\Chern_1(E)-\frac{1}{\rk(E)}\Im\left[\overline{\Zch_X(E)}\rho_0\right]\Chern_2(E).
        \end{split}
    \end{equation}
    We substitute this into~\eqref{eq:imaginaryratio_subsheaf} to obtain
    \begin{equation}\label{eq:stability_comparison}
        \begin{split}
            \Im\left(\Zch_X(S)\,\overline{\Zch_X(E)}\right)=&\Im\left[\overline{\Zch_X(E)}\left(\rho_0U_1+\rho_1[\omega]\right)\right].\left(-\frac{\rk(S)}{\rk(E)}\Chern_1(E)+\Chern_1(S)\right)\\
            &+\Im\left[\overline{\Zch_X(E)}\rho_0\right]\left(-\frac{\rk(S)}{\rk(E)}\Chern_2(E)+\Chern_2(S)\right)
        \end{split}
    \end{equation}
    and the right-hand side can be rewritten as
    \begin{equation}
        \rk(S)\abs{\Zch_X(E)}\left(\frac{\Chern_1(S)}{\rk(S)}-\frac{\Chern_1(E)}{\rk(E)}\right).[\beta]+\rk(S)\abs{\Zch_X(E)}2\alpha\left(\frac{\Chern_2(S)}{\rk(S)}-\frac{\Chern_2(E)}{\rk(E)}\right).\qedhere
    \end{equation}
\end{proof}

Pingali's Theorem~\ref{thm:Pingali_stab} holds assuming that~$\eta>0$,~$\beta=0$ and~$\Tr\curvform(h)>0$. In the case when~$\beta/2\alpha\in\chern_1(N)$ for a line bundle~$N$, we can reduce to this situation by considering the bundle~$E'=E\otimes N$, since there is a solution of the twisted vector bundle Monge-Amp\`ere equation on~$E$ if and only if there is a solution of the usual vector bundle Monge-Ampère equation on~$E'$. In our setting however the ``rationality assumption''~$\beta\in\chern_1(N)$ is not very natural, as it is not satisfied by many important charges, and we will show that indeed this hypothesis is not necessary.

Still, in view of Theorem~\ref{thm:Pingali_stab}, it is natural to consider the condition
\begin{equation}\label{eq:curv_positivitycondition}
    2\alpha\Tr\curvform(h)+\rk(E)\beta>0.
\end{equation}
We will prove shortly that any~$\Zch$-positive metric, not necessarily solving the~$\Zch$-critical equation, satisfies~\eqref{eq:curv_positivitycondition}. An interesting phenomenon is that the existence of \emph{some}~$h\in\Hermmetric(E)$ satisfying~\eqref{eq:curv_positivitycondition} is guaranteed by the inequality~\eqref{eq:Zpositive}.
\begin{lemma}[\cite{McCarthy_thesis}]\label{lemma:subsol_positiveclass}
    Let~$\Zch$ be a polynomial central charge, and let~$E\to X$ be a holomorphic vector bundle on the K\"ahler surface~$X$. If~$h\in\Hermmetric(E)$ is~$\Zch$-positive, then~$E$ is~$\Zch$-positive.
\end{lemma}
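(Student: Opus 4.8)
The plan is to unwind the definitions of $\Zch$-positivity for the metric $h$ and for the bundle $E$, and show that the pointwise positivity of the form $\Im(\e^{-\I\vartheta_E}\Zdiff'(h)) = 2\alpha\,\curvform(h)+\beta\otimes\id_E$ (as established in the excerpt, equation \eqref{eq:positivity_comparison}) forces the cohomological inequality \eqref{eq:Zpositive} for every analytic subvariety $V\subset X$ with $0<\dim V<\dim X=2$, i.e.\ for every (reduced, irreducible) curve $V$. First I would reduce, via Proposition~\ref{prop:stability_comparison} (or rather its ``restriction-to-$V$'' analogue) and the formula for $[\beta]$ in terms of $\rho_0,\rho_1,U_1,[\omega]$ and $\overline{\Zch_X(E)}$, the inequality $\Im(\Zch_V(E_{\restriction V})\,\overline{\Zch_X(E)})>0$ to an inequality purely about the cohomology class represented by $2\alpha\,\Tr\curvform(h)+\rk(E)\,\beta$ paired with $[V]$. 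Concretely, since on a curve $\Zch_V(E_{\restriction V}) = \rho_1[\omega].V\,\rk(E)+\rho_0 U_1.V\,\rk(E)+\rho_0\,\chern_1(E).V$, multiplying by $\overline{\Zch_X(E)}$ and taking imaginary parts produces exactly $\abs{\Zch_X(E)}\big(\rk(E)[\beta]+2\alpha\,\chern_1(E)\big).V$, up to the positive factor $\abs{\Zch_X(E)}$; note $\Tr\curvform(h)$ represents $\chern_1(E)$.

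The key step is then the observation singled out just before this lemma: a $\Zch$-positive metric satisfies the pointwise inequality \eqref{eq:curv_positivitycondition}, namely $2\alpha\,\Tr\curvform(h)+\rk(E)\,\beta>0$ as a real $(1,1)$-form. I would prove this by a linear-algebra argument at a point $x\in X$: diagonalise $\curvform_{\alpha,\beta}(h)$ with respect to $\omega$ and $h$ simultaneously (it is $h$-self-adjoint and $(1,1)$), and test the positivity \eqref{eq:positivity_comparison} against rank-one endomorphism-valued forms $\xi = \bar\zeta\otimes(e_i\otimes e_j^*)$ where $\zeta\in T^{1,0}_x X$ and $\{e_i\}$ is an eigenbasis; summing the resulting scalar inequalities over $i,j$ and over a suitable choice of $\zeta$'s gives that $\Tr$ of $2\alpha\,\curvform(h)+\beta\otimes\id_E$, wedged with $\I\,\bar\zeta\wedge\zeta$, is positive for all $\zeta$, which is precisely positive-definiteness of the $(1,1)$-form $2\alpha\,\Tr\curvform(h)+\rk(E)\,\beta$. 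Once this is in hand, integrating this positive form over any curve $V$ (using that the restriction of a positive $(1,1)$-form to a complex curve is a positive area form, hence has strictly positive integral) yields $\big(2\alpha\,\chern_1(E)+\rk(E)[\beta]\big).V>0$, which is the sought-after inequality after dividing by $\abs{\Zch_X(E)}>0$.

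The main obstacle I anticipate is the bookkeeping in the linear-algebra step: one must choose the test vectors $\xi$ cleverly so that the ``$\sym$'' bracket and the $\Tr$ over $\End(E_x)$ combine to isolate $\Tr(2\alpha\,\curvform(h)+\beta\otimes\id_E)$ rather than some other contraction, and one must make sure the argument does not secretly use more than $\Zch$-positivity gives (the excerpt warns that $\Zch$-positivity of $h$ is a priori stronger than what each conclusion needs). A clean way to organise this is to note that, by \eqref{eq:positivity_comparison}, $\Zch$-positivity says the Hermitian form $a\mapsto \alpha\,\I\Tr[a^*\wedge a\wedge\curvform_{\alpha,\beta}(h)+a^*\wedge\curvform_{\alpha,\beta}(h)\wedge a]$ on $\Alt^{0,1}(\End E_x)$ is positive definite; restricting to $a=\bar\zeta\otimes\id_E$ already gives $\alpha\,\I\,\rk(E)\,\bar\zeta\wedge\zeta\wedge\big(2\,\Tr\curvform(h)/\rk(E)\cdot\alpha\ \text{-type term}\big)$ — more precisely it gives $2\,\I\,\bar\zeta\wedge\zeta\wedge\big(\alpha\,\Tr\curvform(h)+\tfrac{\rk(E)}{2}\beta\big)>0$ for all $\zeta\neq0$ — which is exactly \eqref{eq:curv_positivitycondition}. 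With that shortcut the proof is essentially immediate, and the only remaining point is the (standard) fact that a strictly positive $(1,1)$-form pairs strictly positively with the fundamental class of any curve, plus the identification of $[\beta]$ from \eqref{eq:abc_coeff} carried out exactly as in the proof of Proposition~\ref{prop:stability_comparison}.
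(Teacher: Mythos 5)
Your proposal is correct and follows essentially the same route as the paper: test the $\Zch$-positivity condition against $\xi=v\otimes\id_E$ to deduce that $2\alpha\Tr\curvform(h)+\rk(E)\beta$ is a positive $(1,1)$-form, then identify $\big(2\alpha\,\chern_1(E)+\rk(E)[\beta]\big).[V]$ with $\Im\big(\overline{\Zch_X(E)}\,\Zch_V(E_{\restriction V})\big)$ up to the positive factor $\abs{\Zch_X(E)}$. The only cosmetic difference is that the paper phrases the intermediate step as a Nakai--Moishezon/Demailly--P\u{a}un equivalence for the class $2\alpha\,\chern_1(E)+\rk(E)[\beta]$, whereas you integrate the positive representative over the curve directly, which is the only direction actually needed here.
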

\begin{proof}
    We use the notation of~\eqref{eq:abc_coeff}. Recall that a metric~$h\in\Hermmetric(E)$ is~$\Zch$-positive if for any~$x\in X$ and any~$\xi\in T^{0,1}_x{}^*X\times\End(E_p)$
    \begin{equation}
        \I\Tr\left[ \xi^*\wedge\xi\wedge\left(2\alpha\,\curvform+\beta\otimes\id_E\right)+\xi^*\wedge\left(2\alpha\,\curvform+\beta\otimes\id_E\right)\wedge \xi\right]>0.
    \end{equation}
    Hence, if we choose~$\xi=v\otimes\id_{E}$ for some~$v\in T^{0,1}_x{}^*X$ we get
    \begin{equation}
        \I\bar{v}\wedge v\wedge\Tr\left[2\alpha\,\curvform+\beta\otimes\id_E\right]>0
    \end{equation}
    and as this holds for any choice of~$v$,~$2\alpha\Tr\curvform(h)+\rk(E)\beta$ is a positive~$(1,1)$-form.

    We now claim that: \emph{the cohomology class~$2\alpha\,\Chern_1(E)+\rk(E)[\beta]$ is positive if and only if~$E$ is~$\Zch$-positive}.
    
    Recall that the Demailly-P\u{a}un extension of the Nakai-Moishezon criterion~\cite{Demailly_Paun} (see also~\cite{Buchdal_surfaces} for the case of surfaces) implies that a class~$[\chi]$ is positive if and only if~$[\chi].[V]>0$ for every curve~$V\subset X$. Hence,~$2\alpha\,\Chern_1(E)+\rk(E)[\beta]>0$ if and only if for every~$V\subset X$
    \begin{equation}
        \left(2\alpha\,\Chern_1(E)+\rk(E)[\beta]\right).[V]>0
    \end{equation}
    which translates to, by definition of~$\beta$,
    \begin{equation}
        \left(2\alpha\,\Chern_1(E)+\frac{\rk(E)}{\abs{\Zch_X(E)}}\Im\left(\overline{\Zch_X(E)}\left(\rho_0U_1+\rho_1[\omega]\right)\right)\right).[V]>0.
    \end{equation}
    On the other hand, having fixed a curve~$V$,~$\Im\left(\Zch_V(E)\,\Zch_X(E)^{-1}\right)>0$ is equivalent to
    \begin{equation}
        \Im\left(\overline{\Zch_X(E)}\Zch_V(E)\right)>0
    \end{equation}
    and~$\Zch_X(E)$ is given by~\eqref{eq:Zcritical_surface_original}, while integrating the charge over~$V$ we find
    \begin{equation}\label{eq:charge_subvariety}
        \Zch_V(E)=\big(\rho_0\,\Chern_1(E)+\rk(E)\left(\rho_0 U_1+\rho_1[\omega]\right)\big).[V].
    \end{equation}
    Hence,~$\Im\left(\overline{\Zch_X(E)}\Zch_V(E)\right)$ equals
    \begin{equation}
    \begin{split}
        &\Im\left(\overline{\Zch_X(E)}\rho_0\,\Chern_1(E)\right).[V]+\rk(E)\Im\left(\overline{\Zch_X(E)}\left(\rho_0 U_1+\rho_1[\omega]\right)\right).[V]\\
        &\hspace{1cm}=\abs{\Zch_X(E)}\,2\alpha\,\Chern_1(E).[V]+\rk(E)\Im\left(\overline{\Zch_X(E)}\left(\rho_0 U_1+\rho_1[\omega]\right)\right).[V].
    \end{split}
    \end{equation}
    So,~$\Im\left(\overline{\Zch_X(E)}\Zch_V(E)\right)>0$ is equivalent to
    \begin{equation}
        2\alpha\,\Chern_1(E).[V]+\frac{\rk(E)}{\abs{\Zch_X(E)}}\Im\left(\overline{\Zch_X(E)}\left(\rho_0 U_1+\rho_1[\omega]\right)\right).[V]>0.
        \qedhere
    \end{equation}
\end{proof}

\begin{rmk}\label{rmk:positivity_needed_stability}
    Let~$E$ be any rank~$2$ vector bundle on the K\"ahler surface~$X$, and fix a metric~$h_0\in\Hermmetric(E)$. It is easy to check, as we are working with~$(1,1)$-forms on a surface, that~$h_0$ is a solution of the following equation for~$h\in\Hermmetric(E)$
    \begin{equation}\label{eq:characteristic_Z}
        \curvform(h)^2-(\Tr\curvform(h_0))\wedge\curvform(h)+\det\curvform(h_0)\otimes\id_E=0,
    \end{equation}
    as every matrix is a zero of its characteristic polynomial. If we choose a polynomial central charge~$\Zch$ such that
    \begin{equation}\label{eq:characteristic_Z_coeff}
    \begin{dcases}
        2\,\Im\left(\overline{\Zch_X(E)}\left(\rho_0u_1+\rho_1\omega\right)\right)=-\Im\left(\overline{\Zch_X(E)}\rho_0\right)\Tr\curvform(h_0)\\
        2\,\Im\left(\overline{\Zch_X(E)}\left(\rho_0u_2+\rho_1\omega\wedge u_1+\rho_2\omega^2\right)\right)=\Im\left(\overline{\Zch_X(E)}\rho_0\right)\det\curvform(h_0),
    \end{dcases}
    \end{equation}
    then~\eqref{eq:characteristic_Z} is equivalent to the~$\Zch$-critical equation (c.f.~\eqref{eq:Zcritical_surface} and~\eqref{eq:abc_coeff}), and~$h_0$ is a~$\Zch$-critical metric. However it may very well be that~$E$ is not~$\Zch$-stable, for example if~$E$ is not simple, see Lemma~\ref{lemma:stable_simple}. This apparent contradiction with Theorem~\ref{thm:Zcrit_stability} is solved by noting that for this central charge, the~$\Zch$-critical metric~$h_0$ can never be~$\Zch$-positive: indeed we saw in the proof of Lemma~\ref{lemma:subsol_positiveclass} that if~$h$ is a~$\Zch$-positive metric we should have (with the notation of~\eqref{eq:abc_coeff})
    \begin{equation}
        2\alpha\Tr\curvform(h)+\rk(E)\beta>0,
    \end{equation}
    but~\eqref{eq:characteristic_Z} and~\eqref{eq:characteristic_Z_coeff} imply~$2\alpha\Tr\curvform(h_0)+\rk(E)\beta=0$, since~$\beta/2\alpha=-\Tr\curvform(h_0)/2$. This shows that it is not possible to deduce~$\Zch$-stability from the existence of a~$\Zch$-critical metric, without some positivity condition on the metric, see also Example~\ref{ex:not_positive_P2}.    
\end{rmk}

Putting together Proposition~\ref{prop:stability_comparison} and the equivalence of~$\Zch$-positivity and (twisted) Monge-Ampère positivity explained in Section~\ref{sec:subsolutions} with Pingali's stability result (Theorem~\ref{thm:Pingali_stab}) will prove Theorem~\ref{thm:Zcrit_stability} in the case~$\alpha>0$, except for the possible non-rationality of~$\beta$ and the fact that the statement of Theorem~\ref{thm:Pingali_stab} in~\cite{Pingali_vbMA} requires that the form~$\eta$ in~\eqref{eq:vbMA} is a positive top-degree form. We can translate this through the correspondence in Lemma~\ref{lemma:equation_equivalence} to obtain a condition for a general~$\Zch$-critical metric: with the notation of~\eqref{eq:abc_coeff},
\begin{equation}\label{eq:volumeform_hyp}
    \eta>0\iff\beta^2-4\alpha\gamma>0.
\end{equation}
We call~\eqref{eq:volumeform_hyp} the \emph{volume form hypothesis}. This hypothesis is in fact not necessary for Theorem~\ref{thm:Pingali_stab} to hold, as we show in Section~\ref{sec:positivity_stability}.

An algebraic analogue of Proposition~\ref{prop:Bogo-analytic} can be given in the projective setting. Its proof does not require the use of~$\Zch$-critical metrics, nor the volume form hypothesis, thus providing additional evidence for Conjecture~\ref{conj:Zpositive}. Note that this result also implies the conjecture in~\cite[page~$4$]{Pingali_vbMA}.
\begin{prop}
    Assume that~$X$ is a smooth projective surface,~$\Zch$ is a polynomial central charge, and~$E$ is a rank~$2$ vector bundle which is~$\Zch$-positive, satisfies~\eqref{eq:Z_alphapositive}, and is~$\Zch$-stable. Then~$E$ is Mumford stable with respect to the K\"ahler class~$c_1(E)+2\theta$ and the Bogomolov inequality~\eqref{eq:Bogo} holds.
\end{prop}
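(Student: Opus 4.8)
The plan is to combine the algebraic translation of Proposition~\ref{prop:stability_comparison} with the Bogomolov-type inequality \eqref{eq:Bogo} from Proposition~\ref{prop:Bogo-analytic}, reducing Mumford stability with respect to the class $c_1(E)+2\theta$ to a statement that follows purely from $\Zch$-stability and $\Zch$-positivity. First I would recall that, since $E$ has rank $2$, a saturated subsheaf $S\subset E$ of rank between $0$ and $\rk(E)$ must have rank $1$, so I only need to test the slope inequality against rank-$1$ saturated subsheaves (equivalently, against the destabilising sub-line-bundles together with a zero-cycle correction). The Mumford slope with respect to $c_1(E)+2\theta$ is $\mu_{c_1(E)+2\theta}(S)=\frac{\Chern_1(S).(c_1(E)+2\theta)}{\rk(S)}$, and the key algebraic fact from the proof of Lemma~\ref{lemma:subsol_positiveclass}, together with the hypothesis $\alpha>0$ (which holds by Remark~\ref{rmk:strong_Zpositive}, since $E$ satisfies \eqref{eq:Z_alphapositive}), is that $\frac{\beta}{2\alpha}$ represents the class $\vartheta=\theta$, so $c_1(E)+2\theta=c_1(E)+2\vartheta$ is the relevant polarisation. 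The point is that $2\alpha\,\Chern_1(E)+\rk(E)[\beta]$ is a Kähler class by $\Zch$-positivity (Theorem~\ref{thm:Zcrit_positivity_bundle} via the Demailly--P\u{a}un criterion, exactly as in Lemma~\ref{lemma:subsol_positiveclass}), which is what makes $c_1(E)+2\theta$ a legitimate Kähler class to polarise with.

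Next I would run the standard argument that $\Zch$-stability plus Bogomolov forces Mumford stability, which is the higher-rank analogue of the classical fact on surfaces. For a rank-$1$ saturated subsheaf $S\subset E$, write $Q=E/S$ for the torsion-free quotient of rank $1$. By $\Zch$-stability, $\Im(\Zch_X(Q)/\Zch_X(E))>0$; applying Proposition~\ref{prop:stability_comparison} to $Q$ (or dually to $S$) turns this into $\mu_{MA,\vartheta}(S)<\mu_{MA,\vartheta}(E)$, i.e.
\begin{equation}
\frac{\Chern_2(S)}{\rk(S)}-\frac{\Chern_2(E)}{\rk(E)}+\left(\frac{\Chern_1(S)}{\rk(S)}-\frac{\Chern_1(E)}{\rk(E)}\right).\frac{[\beta]}{2\alpha}<0.
\end{equation}
Now I would use the Bogomolov inequality \eqref{eq:Bogo} for $E$, which holds by Proposition~\ref{prop:Bogo-analytic} (here the volume form hypothesis is among the running assumptions of that proposition, but as noted after \eqref{eq:volumeform_hyp} it can be removed using Section~\ref{sec:positivity_stability}; in the projective setting one can also argue algebraically), together with the elementary inequality relating $\Chern_2$ of a rank-$1$ sheaf to its $\Chern_1^2$: for a saturated rank-$1$ subsheaf $S=L\otimes\mathcal I_Z$ with $Z$ a zero-dimensional subscheme, $\Chern_2(S)=\tfrac12 c_1(L)^2-\mathrm{length}(Z)\le \tfrac12\Chern_1(S)^2$. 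Feeding $\Chern_2(S)\le\tfrac12\Chern_1(S)^2$ and $\Chern_2(E)\ge\tfrac14\Chern_1(E)^2$ into the displayed inequality, and completing the square using $\xi:=\Chern_1(S)-\tfrac12\Chern_1(E)$ (the ``discriminant'' variable), one obtains
\begin{equation}
\frac{1}{2}\,\xi^2+\frac{1}{2}\,\xi.\frac{[\beta]}{\alpha}<0
\end{equation}
after clearing the rank factors, which rearranges to $\xi.\bigl(\xi+\tfrac{[\beta]}{\alpha}\bigr)<0$, hence $\xi.\bigl(2\alpha\,\xi+2[\beta]\bigr)<0$; since $2\alpha\,\Chern_1(E)+2\rk(E)[\beta]$ is positive and rank is $2$, a Hodge-index argument forces $\xi.\bigl(2\alpha\,\Chern_1(E)+4[\beta]\bigr)<0$, which is precisely $\mu_{c_1(E)+2\theta}(S)<\mu_{c_1(E)+2\theta}(E)$ after dividing by $2\alpha$.

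The main obstacle I anticipate is the Hodge-index manoeuvre: one needs to pass from the discriminant-type inequality $\xi^2+\xi.\tfrac{[\beta]}{\alpha}<0$ (involving the self-intersection of $\xi$, which could a priori be positive) to the sign of a single intersection number $\xi.(c_1(E)+2\theta)$ against the positive class $c_1(E)+2\theta$. The standard trick is that if $\xi^2\ge 0$ and $\xi\ne 0$ then, by the Hodge index theorem, $\xi$ pairs nonzero against every Kähler class, and one fixes the sign by continuity/limit arguments against the positive class $2\alpha\,\Chern_1(E)+2\rk(E)[\beta]$; if instead $\xi^2<0$ one uses that the $\le 0$ part of the lattice is negative definite and $\xi$ cannot be orthogonal to the Kähler cone. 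Making this dichotomy rigorous, and checking the degenerate boundary cases (e.g. $\xi^2=0$, or $Q$ having torsion that must be absorbed into $Z$), is where the care is needed; the rest is the bookkeeping of Chern classes and the rank-$2$ simplification. I would also need to confirm the equality-case bookkeeping is not required here, since only strict stability is claimed, so the projective-flatness borderline of Proposition~\ref{prop:Bogo-analytic} does not intervene.
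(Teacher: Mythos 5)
Your proposal has a genuine gap, and it is structural: you use the Bogomolov inequality \eqref{eq:Bogo} for~$E$ as an \emph{input}, whereas in this proposition it is part of the \emph{conclusion}. Neither of your justifications for it is available. Proposition~\ref{prop:Bogo-analytic} assumes the existence of a $\Zch$-positive \emph{and $\Zch$-critical} Hermitian metric, which is not among the hypotheses here (the point of the present statement is precisely that it is purely algebraic and uses no critical metric); and the ``algebraic'' proof of Bogomolov on a projective surface requires Mumford (semi)stability, which is exactly what you are trying to prove, so that route is circular. There are also problems downstream: the inequality $\Chern_2(E)\geq\tfrac14\Chern_1(E)^2$ you feed in is the \emph{negation} of \eqref{eq:Bogo} (since $\Chern_2(E)=\tfrac12\chern_1(E)^2-\chern_2(E)$, Bogomolov reads $\Chern_2(E)\leq\tfrac14\Chern_1(E)^2$); with the correct direction the completed square is $\tfrac12\,\xi.(\xi+\Chern_1(E))+\xi.\vartheta$, not $\tfrac12\xi^2+\tfrac12\,\xi.\tfrac{[\beta]}{\alpha}$, so a cross term $\xi.\Chern_1(E)$ has been dropped; and the corrected inequality $\xi.(\xi+\Chern_1(E)+2\vartheta)<0$, combined with the destabilising assumption $\xi.(\Chern_1(E)+2\vartheta)\geq0$, only yields $\xi^2<0$, which the Hodge index theorem does not exclude, so the endgame does not close.

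The actual argument is simpler and runs in the opposite logical order. A destabilising saturated rank-$1$ subsheaf of~$E$ is a line bundle~$L_1$ with $E/L_1\cong L_2\otimes\mathcal{I}_Y$ for $Y$ zero-dimensional of length $\ell(Y)\geq0$, whence $\Chern_1(E)=\Chern_1(L_1)+\Chern_1(L_2)$ and $\Chern_2(E)=\tfrac12\left(\Chern_1(L_1)^2+\Chern_1(L_2)^2\right)-\ell(Y)$. Substituting these, together with $\Chern_2(L_1)=\tfrac12\Chern_1(L_1)^2$, into the Monge--Amp\`ere slope inequality $\mu_{MA,\vartheta}(L_1)<\mu_{MA,\vartheta}(E)$ --- which follows from $\Zch$-stability via Proposition~\ref{prop:stability_comparison} and $\alpha>0$ --- gives \emph{exactly} $\left(\Chern_1(L_2)-\Chern_1(L_1)\right).\left(\Chern_1(E)+\theta'\right)>2\ell(Y)\geq0$ with $\theta'=2\theta$, i.e.\ \eqref{eq:devissageZstab}; this is already the Mumford stability inequality against the class $\Chern_1(E)+\theta'$, with no Bogomolov input and no Hodge index manoeuvre. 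The Bogomolov inequality is then deduced \emph{afterwards} from Mumford stability with respect to the positive class $\Chern_1(E)+2\theta$ by Reid's theorem~\cite{Reid}, approximating that class by rational ones. Your opening paragraph --- reduction to rank-$1$ saturated subsheaves, the identification $\vartheta=\theta$, and positivity of $\Chern_1(E)+2\theta$ via the Demailly--P\u{a}un criterion as in Lemma~\ref{lemma:subsol_positiveclass} --- does match the paper and is correct.
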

\begin{proof}
Denoting~$\theta'=[\beta/\alpha]=2\theta$, the positivity assumption implies (as in the proof of Lemma~\ref{lemma:subsol_positiveclass}) that
$A:=c_1(E)+\theta'>0$. Let's check that 
$E$ is Mumford stable with respect to the positive class~$A$. If it is not the case, as~$E$ has rank~$2$, there exists a subline bundle~$L_1$ of~$E$ with torsion free quotient that destabilizes (in the sense of Mumford), see~\cite[Chapter 4, p.87]{Friedman}. The holomorphic bundle~$E$ sits in an exact sequence
\begin{equation}
    0 \longrightarrow L_1 \longrightarrow E \longrightarrow L_2 \otimes \mathcal{I}_Y\longrightarrow 0
\end{equation}
where~$Y\subset X$ is a closed subscheme of dimension~$0$. Actually, as we are working over a surface,~$E/L_1$ is isomorphic to~$L_2\otimes \mathcal{I}_Y$ where~$L_2$ is invertible and~$Y$ is a locally complete intersection of~$X$ of dimension~$0$, see~\cite[Chapter 2]{Friedman}. 

If~$\ell(Y)\geq 0$ denotes the length of~$Y$, then we have the relations~$\Chern_1(E)=\Chern_1(L_1)+\Chern_1(L_2)$ and ~$\chern_2(E)=\Chern_1(L_1)\Chern_1(L_2)+\ell(Y)$ from the fact that~$\chern_2(\mathcal{I}_Y)=\ell(Y)$.
Using above relations, the fact that~$L_1$ destabilizes~$E$ writes as 
\begin{equation}
    (\Chern_1(L_2)-\Chern_1(L_1)).(\Chern_1(L_1)+\Chern_1(L_2)+\theta')< 0,
\end{equation}
which is equivalent to
\begin{equation} \label{eq:ineq1}
    \Chern_1(L_2)^2-\Chern_1(L_1)^2+(\Chern_1(L_2)-\Chern_1(L_1))\theta'<0.
\end{equation}

Now, by Proposition~\ref{prop:stability_comparison} and Remark~\ref{rmk:strong_Zpositive}, the bundle~$E$ is Monge-Amp\`ere stable and the Monge-Amp\`ere slope inequality applied to~$L_1$ and~$E$ with respect to~$\theta$ provides the inequality
\begin{equation}\label{eq:devissageZstab}
    \Chern_1(L_2)^2-\Chern_1(L_1)^2+\left(\Chern_1(L_2)-\Chern_1(L_1)\right)\theta'>2\ell(Y).
\end{equation}
With inequality~\eqref{eq:ineq1}, this provides the expected contradiction. Thus~$E$ is Mumford stable with respect to~$A$
and we can apply ~\cite[Theorem~$1$]{Reid} (a careful reading of Reid's proof shows that it holds for any integral ample class and we can approximate~$A$ by rational classes).
\end{proof}

\begin{exm}\label{exm:CP2blowup}
It is easy to construct rank 2 bundles over a surface which are Monge-Ampère stable,~$\Zch$-stable,~$\Zch$-positive, and~$\Zch$-positive for quotients, but are not Mumford stable with respect to certain polarizations. For a positive integer~$r$, let's consider a non-split extension~$$0\to \mathcal{L}_r\to E\to \sheaf_X\to 0$$ over~$X$, the blow-up of~$\PP^2$ at one point. Let~$H$ be the hyperplane section and~$E_1$ is the exceptional divisor. We consider~$[\omega]=pH-qE_1$ with positive integers~$p,q$ such that~$p>q$.
Now we choose~$\mathcal{L}_r=r(qH-pE_1)$,~$r\in \mathbb{N}^*$ so that
\begin{equation}\label{eq:ch1}
\Chern_1(\mathcal{L}_r).[\omega]=0=\Chern_1(E).[\omega],
\end{equation}
and this implies that~$E$ is strictly Mumford semi-stable with respect to the positive class~$\omega$.
Moreover,
\begin{equation}\label{eq:ch2}
\Chern_2(\mathcal{L}_r)=\frac{r^2}{2}(q^2-p^2)<0.
\end{equation}
Thus, for large~$r\gg 1$ the extension is non trivial as~$h^1(X,\mathcal{L}_r)\neq 0$. This can be deduced from the Hirzebruch-Riemann-Roch formula, since
$h^0(X,\mathcal{L}_r)-h^1(X,\mathcal{L}_r)+h^2(X,\mathcal{L}_r)=\frac{r^2}{2}(q^2-p^2)+O(r)$ is negative for large~$r$.

 Now, let's consider a polynomial central charge~$\Zch$ for which~$U_1$ is proportional to~$[\omega]$. From~\eqref{eq:abc_coeff}, a computation shows  that 
$$\vert Z_X(E)\vert\alpha=U_1.[\omega]\Im(\rho_0\overline{\rho_1})+
[\omega]^2\Im(\rho_0\overline{\rho_2})$$ which is independent of~$r$ and will be positive if~$U_1$ is well-chosen. Furthermore, 
\begin{align*}\vert Z_X(E)\vert\beta=&\ch_2(E)\Im(\overline{\rho_0}\rho_1)\omega + 2\Im((\rho_0 u_1+\rho_1\omega)(\overline{\rho_0}u_2+\overline{\rho_1}u_1.[\omega]+\overline{\rho_2}[\omega]^2))\\
=&r^2\left(\frac{q^2-p^2}{2}\right)\Im(\overline{\rho_0}\rho_1)\omega+ 2 \Im(\rho_0\overline{\rho_1}u_1.[\omega] + \rho_0\overline{\rho_2}[\omega^2])u_1
+ 2\Im(\rho_1\overline{\rho_2})[\omega^2]\omega\\
&+ 2\Im(\overline{\rho_0}\rho_1)U_2\omega.
\end{align*}
Since~$\Im(\rho_1\overline{\rho_0})<0$,  we can choose~$U_2$ proportional to~$[\omega]^2$ and independent of~$r$ such that~$\beta=\kappa \omega$ with~$\kappa>0$ possibly very large.

We want to show that~$E$ is Monge-Ampère stable with respect to~$\vartheta=[{\beta}/{2\alpha}]$. Firstly, we note that 
$\ch_2(\mathcal{L}_r)=\mu_{MA,\vartheta}(\mathcal{L}_r)<\mu_{MA,\vartheta}(\mathcal{O}_X)$ using~\eqref{eq:ch1} and~\eqref{eq:ch2}. If not, there will be a rank 1 torsion free susbheaf~$\mathcal{L}'$ that will destabilize~$E$ with respect to the Monge-Ampère slope~$\mu_{MA,\vartheta}$. In the above exact sequence, let us denote the maps~$\theta_1:E\to \mathcal{O}_X$ and~$\theta_2:\mathcal{L}'\to E$. Since~$\mathcal{L}'$ has rank one, it is Monge-Ampère stable and so the map~$\theta_1\circ \theta_2$ is either trivial or an isomorphism. In the first case, we get a new map~$\mathcal{L}'\to \mathcal{L}$ and since~$\mathcal{L}$ is Monge-Ampère stable, this implies~$\mu_{MA,\vartheta}(\mathcal{L}')< \mu_{MA,\vartheta}(\mathcal{L})$. But this contradicts the assumption that~$\mathcal{L}'$ destabilizes~$E$. In the second case, the isomorphisms gives~$\mathcal{L}'=\mathcal{O}_X$ and the extension splits which contradicts our construction. Eventually, this shows that~$E$ is Monge-Ampère stable. Consequently, under the above assumptions,~$E$ is~$\Zch$-stable by Proposition~\ref{prop:stability_comparison}.

Moreover, it is easy to check directly that~$E$ is~$\Zch$-positive and~$\Zch$-positive for quotients. Actually, one has that~$\vert Z_X(E)\vert(2\alpha \chern_1(E)+[\beta] \rk(E))$ 
is positive for large~$\kappa$ and the proof of Lemma Lemma~\ref{lemma:subsol_positiveclass} shows~$\Zch$-positivity. Moreover, over any curve~$V$,~$(2\alpha\chern_1(\mathcal{O}_X)+\beta).V>0$ as~$[\omega]$  is positive. 

We investigate the existence of~$\Zch$-critical metrics for this bundle in Example~\ref{exm:suite}.  Note that the choices of~$r$ and~$U_1,U_2$   can be done effectively. Notice that if we choose~$\mathcal{L}_r=r((q+1)H-pE_1)$, the same computations provide that~$E$ enjoys similar properties but is not Mumford semistable with respect to~$[\omega]$.
\end{exm}

Polynomial central charges are additive on short exact sequences, and this implies in particular that
\begin{lemma}[\cite{DervanMcCarthySektnan}]\label{lemma:stable_simple}
    If~$E\to X$ is a~$\Zch$-stable vector bundle for some polynomial central charge~$\Zch$, then~$E$ is simple.
\end{lemma}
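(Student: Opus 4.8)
The plan is to show that $E$ is simple by proving that its only holomorphic endomorphisms are the scalars. Since $X$ is compact and connected, the coefficients of the characteristic polynomial of any $\phi\in\End(E)$ are global holomorphic functions on $X$, hence constant; thus $\phi$ has a constant eigenvalue $\lambda\in\CC$. It therefore suffices to show that every nonzero $\psi\in\End(E)$ with $\det\psi\equiv0$ leads to a contradiction: applying this to $\psi=\phi-\lambda\,\id_E$ forces $\phi=\lambda\,\id_E$, so that $\End(E)=\CC\,\id_E$ and $E$ is simple. This reduces the whole statement to producing, from a single rank-deficient endomorphism, a pair of contradictory stability inequalities via the additivity of $\Zch$.

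So suppose $\psi\in\End(E)$ is nonzero with $\det\psi\equiv0$, and set $S:=\mathrm{im}\,\psi$ and $z:=\Zch_X(E)$. As $\psi\neq0$ its generic rank is at least $1$, while $\det\psi\equiv 0$ forces the generic rank to be at most $\rk(E)-1$; hence $0<\rk(S)<\rk(E)$. Being a subsheaf of the vector bundle $E$, the sheaf $S$ is torsion-free, and since $S\cong E/\Ker\psi$ it is at the same time a torsion-free quotient of $E$. The $\Zch$-stability of $E$ (Definition~\ref{def:Zstable}) applied to this quotient gives
\begin{equation}
    \Im\!\left(\frac{\Zch_X(S)}{z}\right)>0.
\end{equation}

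I now exploit $S$ in its other role, as a subsheaf $S\hookrightarrow E$. Let $T:=\tors(E/S)$ and $Q:=(E/S)/T$, so that $Q$ is a torsion-free quotient of $E$ with $0<\rk(Q)=\rk(E)-\rk(S)<\rk(E)$; stability gives $\Im(\Zch_X(Q)/z)>0$. Applying the additivity of $\Zch$ on the short exact sequences $0\to S\to E\to E/S\to0$ and $0\to T\to E/S\to Q\to0$, and recalling that $\Im(z/z)=0$, we obtain
\begin{equation}
    0=\Im\!\left(\frac{\Zch_X(S)}{z}\right)+\Im\!\left(\frac{\Zch_X(T)}{z}\right)+\Im\!\left(\frac{\Zch_X(Q)}{z}\right),
\end{equation}
so that the two outer terms being strictly positive forces $\Im(\Zch_X(T)/z)<0$.

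To reach a contradiction it remains to show that the torsion sheaf $T$ satisfies $\Im(\Zch_X(T)/z)\geq0$; this is the main obstacle, and the only place where the hypotheses on the central charge enter. Using~\eqref{eq:abc_coeff} one computes $\Im(\Zch_X(T)/z)=\abs{z}^{-1}\big([\beta].\Chern_1(T)+2\alpha\,\Chern_2(T)\big)$, and the desired sign must be extracted from the positivity $[\omega].\Chern_1(T)\geq0$ (valid since $\Chern_1(T)$ is effective and $\omega$ is K\"ahler) together with the stability-vector conditions $\Im(\rho_j/\rho_{j+1})>0$, which encode precisely the phase ordering of Bayer's polynomial stability and prevent lower-dimensional (torsion) contributions from destabilising. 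When $\alpha>0$ this is transparent through Proposition~\ref{prop:stability_comparison}: the inequality $\Im(\Zch_X(\,\cdot\,)/z)\geq0$ for torsion becomes the classical fact that passing to the saturation weakly increases the twisted Monge--Amp\`ere slope, so that the torsion correction has the correct sign (the delicate subcase being torsion supported at points, where $\Chern_1(T)=0$ and the sign is exactly that of $\alpha$). Granting this last sign, the displayed identity is contradictory, every nonzero rank-deficient endomorphism is excluded, and $E$ is simple.
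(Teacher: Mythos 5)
Your overall strategy --- constant eigenvalues by compactness of $X$, reduction to a nonzero $\psi=\phi-\lambda\,\id_E$ of deficient rank, and playing the double role of $S=\mathrm{im}\,\psi$ as torsion-free quotient and as subsheaf against the additivity of $\Zch$ --- parallels the paper's proof, and your steps up to the identity $\Im(\Zch_X(S)/z)+\Im(\Zch_X(T)/z)+\Im(\Zch_X(Q)/z)=0$ are correct. But the proof has a genuine gap exactly where you flag ``the main obstacle'', and the justification you sketch there is false. You need the torsion sheaf $T=\tors(E/S)$ to satisfy $[\beta].\Chern_1(T)+2\alpha\,\Chern_2(T)\geq 0$, and this is not a ``classical fact'': saturation weakly increases the \emph{Mumford} slope (only the effective class $\Chern_1(T)$ enters there), but it does \emph{not} weakly increase the twisted Monge--Amp\`ere slope. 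For $T$ supported on a curve $C$ one has for instance $\Chern_2(\sheaf_C)=-C^2/2$, and twisting down by points on $C$ decreases $\Chern_2(T)$ by $1$ each time, so $\Chern_2(T)$ can be arbitrarily negative; meanwhile $[\beta]$ carries no positivity whatsoever, since it involves the arbitrary unitary representative $u_1$ in~\eqref{eq:abc_coeff}. Hence the combination $[\beta].\Chern_1(T)+2\alpha\,\Chern_2(T)$ has no definite sign. Even your parenthetical treatment of point-supported torsion requires $\alpha>0$, i.e. condition~\eqref{eq:Z_alphapositive}, which is not a hypothesis of Lemma~\ref{lemma:stable_simple} (and for $\alpha<0$ that case has the wrong sign). ``Granting this last sign'' is therefore granting an unproven and generally false statement, so the contradiction is never reached.

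The paper's proof never meets this issue because it never saturates: it works only with the exact sequence $0\to\ker\phi\to E\to\range\phi\to 0$, whose quotient term is automatically torsion-free, and applies the stability inequality $\Im(\Zch_X(\,\cdot\,)/\Zch_X(E))<0$ to $\ker\phi$ and $\range\phi$ directly \emph{as subsheaves} --- the subsheaf formulation of $\Zch$-stability under which the lemma is stated in the cited reference, and which matches the conclusion of Theorem~\ref{thm:Zcrit_stability}; additivity then yields the contradiction with no torsion sheaf appearing. Your difficulty is not an artifact of presentation: it is precisely the discrepancy between the quotient formulation of Definition~\ref{def:Zstable} and the subsheaf formulation, and it cannot be bridged by a sign claim on charges of torsion sheaves. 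The repair is to use the subsheaf inequality for $\range\psi\subset E$ (not necessarily saturated) as part of the stability hypothesis, as the paper does, rather than to try to manufacture it from the quotient definition via the torsion sheaf.
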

\begin{proof}
    As the Chern character~$\Chern(E)$ is additive on short exact sequences, if~$E$ is~$\Zch$-stable any vector bundle morphism~$\phi:E\to E$ is either~$0$ or an isomorphism. Indeed, considering the kernel and range of~$\phi$ we obtain
    \begin{equation}
        \Im\left(\frac{\Zch_X(\ker\phi)}{\Zch_X(E)}\right)+\Im\left(\frac{\Zch_X(\range\phi)}{\Zch_X(E)}\right)=\Im\left(\frac{\Zch_X(E)}{\Zch_X(E)}\right)=0.
    \end{equation}
    However~$\ker\phi$ and~$\range\phi$ are subsheaves of~$E$, and if~$0<\rk(\ker\phi)<\rk(E)$ the~$\Zch$-stability of~$E$ would imply
    \begin{equation}
        \Im\left(\frac{\Zch_X(\ker\phi)}{\Zch_X(E)}\right)<0 \text{ or } \Im\left(\frac{\Zch_X(\range\phi)}{\Zch_X(E)}\right)<0,
    \end{equation}
    which is a contradiction. But then~$\phi$ must be the multiplication by a scalar: let~$\lambda$ be any eigenvalue of~$\phi$ on a fibre~$E_x$, and apply the previous result to the morphism~$\hat{\phi}=\phi-\lambda\id_E$. As it is not invertible, it must be identically zero.
\end{proof}

\subsection{Consequences of Monge-Ampère positivity}\label{sec:positivity_stability}

This Section rephrases and expands the proof of~\cite[Lemma~$3.1$]{Pingali_vbMA}. We write the details both for completeness and to highlight the importance of the Monge-Ampère positivity of a solution of~\eqref{eq:vbMA_twisted}, which is stronger than the positivity assumption in~\cite{Pingali_vbMA} but is more natural from our point of view. We also highlight the fact that it is not necessary to assume the volume form hypothesis, and we take into account a non-vanishing twist~$\beta/2\alpha$ in~\eqref{eq:vbMA_twisted}. Most of the computation will be done for vector bundles of arbitrary rank, as we hope that our approach can be used to establish Theorem~\ref{thm:Zcrit_stability} for higher rank bundles.

Consider a short exact sequence of vector bundles over the compact K\"ahler surface~$X$
\begin{equation}\label{eq:ses}
0\to S\to E\to Q\to 0.
\end{equation}
Fix a Hermitian metric~$h$ on~$E$, and let~$A\in\Alt^{0,1}(\Hom(Q,S))$ be the second fundamental form of~\eqref{eq:ses}. Then the curvature forms of~$h$, its restriction~$h_S$ on~$S$ and the induced metric~$h_Q$ on~$Q$ are related by
\begin{equation}\label{eq:curvature_secondfundform}
    \curvform_{\alpha,\beta}(h)=\begin{pmatrix}
        \curvform_{S,\alpha,\beta}-\frac{\I}{2\pi}A\wedge A^* & \frac{\I}{2\pi}D'A \\ -\frac{\I}{2\pi}D''A^* & \curvform_{Q,\alpha,\beta}-\frac{\I}{2\pi}A^*\wedge A
    \end{pmatrix}
\end{equation}
where we denote by~$\curvform_S$ and~$\curvform_Q$ the curvature forms of~$h_S$ and~$h_Q$ respectively, and following the previous notation we set~$\curvform_{S,\alpha,\beta}=\curvform_S+\frac{\beta}{2\alpha}\otimes\id_S$. Note also that~\eqref{eq:curvature_secondfundform} depends on the decomposition~$\beta\otimes\id_E=\beta\otimes\id_S\oplus \beta\otimes\id_Q$.

Squaring~\eqref{eq:curvature_secondfundform} we obtain
\begin{equation}
\begin{split}
    &\curvform_{\alpha,\beta}(h)^2=\\
    =&\begin{pmatrix}
        \left(\curvform_{S,\beta}-\frac{\I}{2\pi}A\wedge A^*\right)^2-\left(\frac{\I}{2\pi}\right)^2D'A\wedge D''A^* & \hspace{-2cm}(\dots) \\ (\dots) & \hspace{-2cm}\left(\curvform_{Q,\alpha,\beta}-\frac{\I}{2\pi} A^*\wedge A\right)^2-\left(\frac{\I}{2\pi}\right)^2D''A^*\wedge D'A
    \end{pmatrix}.
\end{split}
\end{equation}
Assume that~$h$ is a solution of~$\curvform_{\alpha,\beta}(h)^2=\eta\otimes\id_E$; then, we must have
\begin{equation}
    \begin{dcases}
        \left(\curvform_{S,\alpha,\beta}-\frac{\I}{2\pi}A\wedge A^*\right)^2-\left(\frac{\I}{2\pi}\right)^2D'A\wedge D''A^*=\eta\otimes\id_S\\
        \left(\curvform_{Q,\alpha,\beta}-\frac{\I}{2\pi}A^*\wedge A\right)^2-\left(\frac{\I}{2\pi}\right)^2D''A^*\wedge D'A=\eta\otimes\id_Q.
    \end{dcases}
\end{equation}
Taking the traces and integrating, we find two equations relating~$\Chern_2(S)$,~$\Chern_2(Q)$, and~$A$ to~$\rk(S)$,~$\rk(Q)$, and~$\vartheta=[\beta/2\alpha]$, with some spurious terms. More precisely, for~$S$ we get
\begin{equation}\label{eq:vbMA_inteq_S}
\begin{split}
    \rk(S)\,[\eta].X=&2\,\rk(S)\mu_{MA,\vartheta}(S)+\rk(S)\vartheta^2-\frac{2\I}{2\pi}\int_X\Tr\left(\curvform_{S,\alpha,\beta}\wedge A\wedge A^*\right)\\
    &+\left(\frac{\I}{2\pi}\right)^2\int_X\Tr\left(\left(A\wedge A^*\right)^2\right)-\left(\frac{\I}{2\pi}\right)^2\int_X\Tr\left(D'A\wedge D''A^*\right)
\end{split}
\end{equation}
while for~$Q$
\begin{equation}\label{eq:vbMA_inteq_Q}
\begin{split}
    \rk(Q)\,[\eta].X=&2\,\rk(Q)\mu_{MA,\vartheta}(Q)+\rk(Q)\vartheta^2-\frac{2\I}{2\pi}\int_X\Tr\left(\curvform_{Q,\alpha,\beta}\wedge A^*\wedge A\right)\\
    &+\left(\frac{\I}{2\pi}\right)^2\int_X\Tr\left(\left(A^*\wedge A\right)^2\right)-\left(\frac{\I}{2\pi}\right)^2\int_X\Tr\left(D''A^*\wedge D'A\right).
\end{split}
\end{equation}
Note now that~\eqref{eq:vbMA} implies
\begin{equation}
    2\,\rk(E)\mu_{MA,\vartheta}(E)+\rk(E)\vartheta^2=\rk(E)\,[\eta].X=(\rk(S)+\rk(Q))[\eta].X
\end{equation}
and from the exact sequence we get
\begin{equation}
\begin{split}
    &2\,\rk(E)\mu_{MA,\vartheta}(E)+\rk(E)\vartheta^2=\\
    =&2\,\rk(S)\mu_{MA,\vartheta}(S)+\rk(S)\vartheta^2+2\,\rk(Q)\mu_{MA,\vartheta}(Q)+\rk(Q)\vartheta^2.
\end{split}
\end{equation}
Hence, adding~\eqref{eq:vbMA_inteq_S} and~\eqref{eq:vbMA_inteq_Q} we obtain
\begin{equation}\label{eq:vbMA_stability_identity1}
    \begin{split}
        &-\frac{2\I}{2\pi}\int_X\Tr\left(\curvform_{S,\alpha,\beta}\wedge A\wedge A^*\right)-\frac{2\I}{2\pi}\int_X\Tr\left(\curvform_{Q,\alpha,\beta}\wedge A^*\wedge A\right)\\
    &+\left(\frac{\I}{2\pi}\right)^2\int_X\Tr\left(\left(A\wedge A^*\right)^2\right)+\left(\frac{\I}{2\pi}\right)^2\int_X\Tr\left(\left(A^*\wedge A\right)^2\right)\\
    &-\left(\frac{\I}{2\pi}\right)^2\int_X\Tr\left(D'A\wedge D''A^*\right)-\left(\frac{\I}{2\pi}\right)^2\int_X\Tr\left(D''A^*\wedge D'A\right)=0.
    \end{split}
\end{equation}
\begin{lemma}\label{lemma:trace_indentities}
With the previous notation, we have
    \begin{equation}
        \Tr\left(\left(A^*\wedge A\right)^2\right)+\Tr\left(\left(A\wedge A^*\right)^2\right)=0,
    \end{equation}
    \begin{equation}
        \Tr\left(D'A\wedge D''A^*\right)=\Tr\left(D''A^*\wedge D'A\right).
    \end{equation}
\end{lemma}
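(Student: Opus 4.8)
The approach I would take is to reduce everything to a single graded cyclicity property of the trace on bundle-valued forms, after which no genuine computation remains. First I would record the types: since $A\in\Alt^{0,1}(\Hom(Q,S))$ we have $A^*\in\Alt^{1,0}(\Hom(S,Q))$, hence $A\wedge A^*\in\Alt^{1,1}(\End(S))$ and $A^*\wedge A\in\Alt^{1,1}(\End(Q))$, while $D'A\in\Alt^{1,1}(\Hom(Q,S))$ and $D''A^*\in\Alt^{1,1}(\Hom(S,Q))$. The key lemma I would prove is that for a $p$-form $\phi$ with values in $\Hom(V,W)$ and a $q$-form $\psi$ with values in $\Hom(W,V)$,
\begin{equation}
    \Tr(\phi\wedge\psi)=(-1)^{pq}\,\Tr(\psi\wedge\phi).
\end{equation}
This is immediate once $\phi$ and $\psi$ are expanded as finite sums of scalar-valued forms tensored with homomorphisms: ordinary cyclicity of the matrix trace handles the endomorphism factors, and swapping the two scalar forms produces the Koszul sign $(-1)^{pq}$.

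Next I would apply this identity twice. For the first claim I take $\phi=A$, of degree $1$, and $\psi=A^*\wedge A\wedge A^*$, of degree $3$, so that $\phi\wedge\psi=(A\wedge A^*)^2$ and $\psi\wedge\phi=(A^*\wedge A)^2$; since $(-1)^{1\cdot3}=-1$ this gives
\begin{equation}
    \Tr\big((A\wedge A^*)^2\big)=-\,\Tr\big((A^*\wedge A)^2\big),
\end{equation}
which is exactly $\Tr((A^*\wedge A)^2)+\Tr((A\wedge A^*)^2)=0$. For the second claim I take $\phi=D'A$ and $\psi=D''A^*$, both of degree $2$; now $(-1)^{2\cdot2}=1$, and the identity reads $\Tr(D'A\wedge D''A^*)=\Tr(D''A^*\wedge D'A)$, as desired.

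I do not expect any real obstacle here: the statement is essentially formal, and the only point requiring attention is the bookkeeping of the Koszul signs when commuting form-valued factors inside the trace. It is perhaps worth stressing that the asymmetry between the two identities — a minus sign in the first, none in the second — is dictated purely by the parity of the product of the degrees of the factors being exchanged, which is odd in the first case and even in the second.
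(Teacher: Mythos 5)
Your proof is correct and follows essentially the same route as the paper: both arguments reduce the two identities to the graded cyclicity of the trace, $\Tr(\phi\wedge\psi)=(-1)^{pq}\Tr(\psi\wedge\phi)$ (the paper states it for $\End(E)$-valued forms after embedding $A$ and $A^*$ as off-diagonal blocks, while you state it directly for $\Hom(V,W)$- and $\Hom(W,V)$-valued forms, which is a harmless variation). The sign bookkeeping in your $1\times 3$ and $2\times 2$ splittings is right.
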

\begin{proof}
This is a consequence of the general fact that for any~$C\in\Alt^k(\End(E))$ and~$B\in\Alt^l(\End(E))$,~$\Tr(C\wedge B)=(-1)^{kl}\Tr(B\wedge C)$. More explicitly, if~$B$ and~$C$ are~$\End(E)$-valued~$1$-forms, we have
    \begin{equation}
    \begin{split}
        \Tr&(C\wedge B\wedge C\wedge B)=\Tr\left(C_iB_jC_kB_l\right)\dd x^i\wedge\dd x^j\wedge\dd x^k\wedge\dd x^l=\\
        &=-\Tr\left(B_jC_kB_lC_i\right)\dd x^j\wedge\dd x^k\wedge\dd x^l\wedge\dd x^i=-\Tr\left(B\wedge C\wedge B\wedge C\right).
    \end{split}
    \end{equation}
    This proves the first identity, identifying~$A$ with the~$\End(E)$-valued differential form
    \begin{equation}
        \begin{pmatrix}
            0 & A\\ 0 &0
        \end{pmatrix}
    \end{equation}
    in the matrix-block notation corresponding to the decomposition~$E=S+Q$ as smooth vector bundles. The other identity is proved in the same way.
\end{proof}

Hence~\eqref{eq:vbMA_stability_identity1} becomes
\begin{equation}
\begin{split}
    \frac{\I}{2\pi}\int_X\Tr\left(\curvform_{S,\alpha,\beta}\wedge A\wedge A^*\right)+\frac{\I}{2\pi}\int_X\Tr\left(\curvform_{Q,\alpha,\beta}\wedge A^*\wedge A\right)=&\\
    =-\left(\frac{\I}{2\pi}\right)^2\int_X&\Tr\big(D''A^*\wedge D'A\big).
\end{split}
\end{equation}
Substitute this expression for~$\int\Tr\left(D''A^*\wedge D'A\right)$ in~\eqref{eq:vbMA_inteq_S}
\begin{equation}\label{eq:vbMA_inteq_S_simplified}
\begin{split}
    \rk(S)\,[\eta].X=&2\,\rk(S)\mu_{MA,\vartheta}(S)+\rk(S)\vartheta^2-\frac{\I}{2\pi}\int_X\Tr\left(\curvform_{S,\alpha,\beta}\wedge A\wedge A^*\right)+\\
    &+\frac{\I}{2\pi}\int_X\Tr\left(\curvform_{Q,\alpha,\beta}\wedge A^*\wedge A\right)+\left(\frac{\I}{2\pi}\right)^2\int_X\Tr\left(\left(A\wedge A^*\right)^2\right)
\end{split}
\end{equation}
As~$2\,\mu_{MA,\vartheta}(E)+\vartheta^2=[\eta].X$, the inequality of Monge-Amp\`ere slopes
\begin{equation}
    \mu_{MA,\vartheta}(S)<\mu_{MA,\vartheta}(E)
\end{equation}
is equivalent to
\begin{equation}\label{eq:vbMA_stability_keycondition}
    \frac{\I}{2\pi}\int_X\Tr\left(\curvform_{Q,\alpha,\beta}\wedge A^*\wedge A\right)-\frac{\I}{2\pi}\int_X\Tr\left(\curvform_{S,\alpha,\beta}\wedge A\wedge A^*\right)>\left(\frac{\I}{2\pi}\right)^2\int_X\Tr\left(\left( A^*\wedge A\right)^2\right).
\end{equation}
\begin{lemma}\label{lemma:subsol1}
    Assume that~$h\in\Hermmetric(E)$ is Monge-Ampère positive. Then,
    \begin{equation}
        \frac{\I}{2\pi}\int_X\Tr\left(\curvform_{Q,\alpha,\beta}\wedge A^*\wedge A\right)-\frac{\I}{2\pi}\int_X\Tr\left(\curvform_{S,\alpha,\beta}\wedge A\wedge A^*\right)\geq 2\left(\frac{\I}{2\pi}\right)^2\int_X\Tr\left((A^*\wedge A)^2\right)
    \end{equation}
    with equality if and only if~$A=0$. If instead~$h\in\Hermmetric(E)$ is Monge-Ampère negative then
    \begin{equation}
        \frac{\I}{2\pi}\int_X\Tr\left(\curvform_{Q,\alpha,\beta}\wedge A^*\wedge A\right)-\frac{\I}{2\pi}\int_X\Tr\left(\curvform_{S,\alpha,\beta}\wedge A\wedge A^*\right)\leq 2\left(\frac{\I}{2\pi}\right)^2\int_X\Tr\left((A^*\wedge A)^2\right)
    \end{equation}
    with equality if and only if~$A=0$.
\end{lemma}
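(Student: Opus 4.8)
The plan is to feed the Monge-Ampère positivity condition \eqref{eq:MA_positivity} the endomorphism-valued $(0,1)$-form built from the second fundamental form $A$ of \eqref{eq:ses}. In the smooth orthogonal splitting $E=S\oplus Q$ determined by $h$, take
\begin{equation}
\xi=\begin{pmatrix} 0 & A\\ 0 & 0\end{pmatrix}\in T^{0,1}{}^*X\otimes\End(E),\qquad \xi^*=\begin{pmatrix} 0 & 0\\ A^* & 0\end{pmatrix},
\end{equation}
so that $\xi^*\wedge\xi$ is the block-diagonal form with entries $0$ and $A^*\wedge A$. Specialised to $n=2$, the positivity condition reads $\I\Tr\bigl[\xi^*\wedge\xi\wedge\curvform_{\alpha,\beta}(h)+\xi^*\wedge\curvform_{\alpha,\beta}(h)\wedge\xi\bigr]>0$ at every point where $\xi\neq 0$, i.e.\ where $A\neq 0$, and equals $0$ where $A=0$; the task is to unwind both trace terms using the block expression \eqref{eq:curvature_secondfundform} of $\curvform_{\alpha,\beta}(h)$.

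The computation I would carry out: expanding in blocks, only the $\Hom(Q,Q)$-entry survives each trace, giving $\Tr[\xi^*\wedge\xi\wedge\curvform_{\alpha,\beta}(h)]=\Tr[A^*\wedge A\wedge\curvform_{Q,\alpha,\beta}]-\frac{\I}{2\pi}\Tr[(A^*\wedge A)^2]$ and $\Tr[\xi^*\wedge\curvform_{\alpha,\beta}(h)\wedge\xi]=\Tr[A^*\wedge\curvform_{S,\alpha,\beta}\wedge A]-\frac{\I}{2\pi}\Tr[(A^*\wedge A)^2]$, where the $\Tr[(A^*\wedge A)^2]$ contributions come precisely from the second-fundamental-form corrections $-\frac{\I}{2\pi}A^*\wedge A$ and $-\frac{\I}{2\pi}A\wedge A^*$ on the diagonal of \eqref{eq:curvature_secondfundform}. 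Then, applying the graded-trace identity $\Tr(C\wedge B)=(-1)^{kl}\Tr(B\wedge C)$ already used in Lemma~\ref{lemma:trace_indentities} — with $A,A^*$ of form-degree $1$ and $\curvform_{S,\alpha,\beta},\curvform_{Q,\alpha,\beta}$ of form-degree $2$ — one gets $\Tr[A^*\wedge A\wedge\curvform_{Q,\alpha,\beta}]=\Tr[\curvform_{Q,\alpha,\beta}\wedge A^*\wedge A]$ and $\Tr[A^*\wedge\curvform_{S,\alpha,\beta}\wedge A]=-\Tr[\curvform_{S,\alpha,\beta}\wedge A\wedge A^*]$. Summing the two terms and multiplying by $\frac{\I}{2\pi}$ yields, pointwise,
\begin{equation}
\frac{1}{2\pi}\I\Tr\bigl[\xi^*\wedge\xi\wedge\curvform_{\alpha,\beta}(h)+\xi^*\wedge\curvform_{\alpha,\beta}(h)\wedge\xi\bigr]=\frac{\I}{2\pi}\Tr[\curvform_{Q,\alpha,\beta}\wedge A^*\wedge A]-\frac{\I}{2\pi}\Tr[\curvform_{S,\alpha,\beta}\wedge A\wedge A^*]-2\Bigl(\frac{\I}{2\pi}\Bigr)^2\Tr[(A^*\wedge A)^2],
\end{equation}
i.e.\ exactly the difference of the two sides of the claimed inequality.

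Finally, Monge-Ampère positivity says the left-hand side above is a nonnegative multiple of the volume form everywhere, strictly positive wherever $A\neq 0$; integrating over $X$ and rearranging gives the asserted inequality, with equality if and only if the integrand vanishes identically, i.e.\ if and only if $A\equiv 0$. The Monge-Ampère negative case is word-for-word identical with all inequalities reversed. Note that nothing in this argument uses that $h$ solves the vector bundle Monge-Ampère equation, nor the volume form hypothesis, nor the rank of $E$ — only the pointwise positivity of $h$ — which is what allows the statement to be recorded at the level of generality needed for higher rank. I expect the only real difficulty to be the sign bookkeeping in the graded-trace manipulations: in particular making sure the $\curvform_{S,\alpha,\beta}$-term acquires the opposite sign to the $\curvform_{Q,\alpha,\beta}$-term, and that the coefficient of $\Tr[(A^*\wedge A)^2]$ comes out as $2(\I/2\pi)^2$, the factor $2$ arising because both test terms $\xi^*\wedge\xi\wedge\curvform_{\alpha,\beta}(h)$ and $\xi^*\wedge\curvform_{\alpha,\beta}(h)\wedge\xi$ contribute one copy.
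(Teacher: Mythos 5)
Your proposal is correct and follows essentially the same route as the paper's proof: test the pointwise Monge--Amp\`ere positivity condition on $\xi=\begin{pmatrix}0&A\\0&0\end{pmatrix}$, expand both trace terms using the block form \eqref{eq:curvature_secondfundform}, rearrange with the graded-trace identity of Lemma~\ref{lemma:trace_indentities}, and integrate. The sign and coefficient bookkeeping you flag as the main risk comes out exactly as in the paper, so there is nothing to add.
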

Note that this inequality (in the positive case, say) is quite similar to~\eqref{eq:vbMA_stability_keycondition}, the only difference is a factor of~$2$. Still, Lemma~\ref{lemma:subsol1} implies~\eqref{eq:vbMA_stability_keycondition} if~$(\I)^2\Tr\left((A^*\wedge A)^2\right)>0$, but this last inequality might not be true in general.
\begin{proof}[Proof of Lemma~\ref{lemma:subsol1}]
    The two cases are symmetrical, so we focus on the positive one. Choose~$\xi$ to be~$\begin{pmatrix}0 & A\\ 0 &0\end{pmatrix}$. From~\eqref{eq:curvature_secondfundform} we have
    \begin{equation}
    \begin{split}
        \xi^*\wedge\xi\wedge\curvform_{\alpha,\beta}(h)=&
        \begin{pmatrix}
        0 & 0 \\ A^* &0
        \end{pmatrix}
        \wedge
        \begin{pmatrix}
        0 & A\\ 0 &0
        \end{pmatrix}
        \wedge
        \begin{pmatrix}
        \curvform_{S,\alpha,\beta}-\frac{\I}{2\pi}A\wedge A^* & \frac{\I}{2\pi}D'A \\ -\frac{\I}{2\pi}D''A^* & \curvform_{Q,\alpha,\beta}-\frac{\I}{2\pi}A^*\wedge A
        \end{pmatrix}=\\
        =&
        \begin{pmatrix}
        0 & 0 \\0 & A^*\wedge A
        \end{pmatrix}
        \wedge
        \begin{pmatrix}
        \curvform_{S,\alpha,\beta}-\frac{\I}{2\pi}A\wedge A^* & \frac{\I}{2\pi}D'A \\ -\frac{\I}{2\pi}D''A^* & \curvform_{Q,\alpha,\beta}-\frac{\I}{2\pi}A^*\wedge A
        \end{pmatrix}=\\
        =&\begin{pmatrix}
        0 & 0 \\ (\dots) & A^*\wedge A\wedge\curvform_{Q,\alpha,\beta}-\frac{\I}{2\pi}(A^*\wedge A)^2
        \end{pmatrix}
    \end{split}
    \end{equation}
    while for the other factor we get
    \begin{equation}
    \begin{split}
        \xi^*\wedge\curvform_{\alpha,\beta}(h)\wedge\xi=&
        \begin{pmatrix}
        0 & 0 \\ A^* &0
        \end{pmatrix}
        \wedge
        \begin{pmatrix}
        \curvform_{S,\alpha,\beta}-\frac{\I}{2\pi}A\wedge A^* & \frac{\I}{2\pi}D'A \\ -\frac{\I}{2\pi}D''A^* & \curvform_{Q,\alpha,\beta}-\frac{\I}{2\pi}A^*\wedge A
        \end{pmatrix}
        \wedge
        \begin{pmatrix}
        0 & A\\ 0 &0
        \end{pmatrix}=\\
        =&
        \begin{pmatrix}
        0& 0\\
        A^*\wedge\curvform_{S,\alpha,\beta}-\frac{\I}{2\pi}A^*\wedge A\wedge A^* & (\dots)
        \end{pmatrix}
        \wedge
        \begin{pmatrix}
        0 & A\\ 0 &0
        \end{pmatrix}=\\
        =&
        \begin{pmatrix}
        0& 0\\
        0 & A^*\wedge\curvform_{S,\alpha,\beta}\wedge A-\frac{\I}{2\pi}(A^*\wedge A)^2 
        \end{pmatrix}
    \end{split}
    \end{equation}
    Taking the trace, we see that the subsolution condition implies
    \begin{equation}
        \I\Tr\left(A^*\wedge A\wedge\curvform_{Q,\alpha,\beta}-\frac{\I}{2\pi}(A^*\wedge A)^2\right)+\I\Tr\left(A^*\wedge\curvform_{S,\alpha,\beta}\wedge A-\frac{\I}{2\pi}(A^*\wedge A)^2\right)\geq 0
    \end{equation}
    with equality if and only if~$A=0$. We rewrite this as
    \begin{equation}
        \frac{\I}{2\pi}\Tr\left(\curvform_{Q,\alpha,\beta}\wedge A^*\wedge A\right)-\frac{\I}{2\pi}\Tr\left(\curvform_{S,\alpha,\beta}\wedge A\wedge A^*\right)-2\left(\frac{\I}{2\pi}\right)^2\Tr\left((A^*\wedge A)^2\right)\geq 0
    \end{equation}
    which gives the thesis.
\end{proof}
As a consequence, we obtain a first extension of Theorem~\ref{thm:Pingali_stab}.
\begin{cor}\label{cor:rank2_MAstability}
    Let~$E$ be a simple vector bundle over a K\"ahler surface. If~$h\in\Hermmetric(E)$ is twisted Monge-Ampère positive and solves the twisted Monge-Ampère equation \eqref{eq:vbMA}, then for every sub-bundle~$S\subset E$ of corank~$1$
    \begin{equation}\label{eq:MAstable_subbundle}
        \mu_{MA,\vartheta}(S)\leq\mu_{MA,\vartheta}(E),
    \end{equation}
    with equality if and only if~$S$ splits off $E$ as a direct summand. If moreover $\rk(E)=2$, then $E$ is Monge-Ampère stable.
\end{cor}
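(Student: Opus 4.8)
The plan is to run the computation of this section for a corank-$1$ sub-bundle and then to observe that the only term in~\eqref{eq:vbMA_stability_keycondition} whose sign is a priori unclear becomes harmless once $\rk(Q)=1$. So let $S\subset E$ be a sub-bundle of corank~$1$, write the associated sequence $0\to S\to E\to Q\to 0$ with $\rk(Q)=1$, let $A\in\Alt^{0,1}(\Hom(Q,S))$ be the second fundamental form of $h$, and set
\begin{equation}
L:=\frac{\I}{2\pi}\int_X\Tr\left(\curvform_{Q,\alpha,\beta}\wedge A^*\wedge A\right)-\frac{\I}{2\pi}\int_X\Tr\left(\curvform_{S,\alpha,\beta}\wedge A\wedge A^*\right),\qquad I:=\left(\frac{\I}{2\pi}\right)^2\int_X\Tr\left((A^*\wedge A)^2\right).
\end{equation}
Substituting $2\mu_{MA,\vartheta}(E)+\vartheta^2=[\eta].X$ into~\eqref{eq:vbMA_inteq_S_simplified} and using Lemma~\ref{lemma:trace_indentities} gives the identity $2\,\rk(S)\bigl(\mu_{MA,\vartheta}(E)-\mu_{MA,\vartheta}(S)\bigr)=L-I$, which is exactly the content of~\eqref{eq:vbMA_stability_keycondition}; so~\eqref{eq:MAstable_subbundle} is equivalent to $L\geq I$ and the equality case in~\eqref{eq:MAstable_subbundle} to $L=I$. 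Since $h$ is twisted Monge-Amp\`ere positive, Lemma~\ref{lemma:subsol1} gives $L\geq 2I$, with equality if and only if $A=0$.

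The key point is that $I\geq 0$ once $\rk(Q)=1$. Indeed $A^*\wedge A\in\Alt^{1,1}(\End Q)$, and since $\End Q$ is canonically trivial of rank~$1$ this is a scalar $(1,1)$-form times $\id_Q$; in a local unitary frame, writing $A=\sum_j A_{\bar\jmath}\,\dd\bar z^{\jmath}$ and choosing a unit section $e$ of $Q$, one has $\tfrac{\I}{2\pi}A^*\wedge A=\bigl(\tfrac{\I}{2\pi}\sum_{j,k}\langle A_{\bar\jmath}e,A_{\bar k}e\rangle_{h_S}\,\dd z^k\wedge\dd\bar z^{\jmath}\bigr)\otimes\id_Q$, and the matrix $\bigl(\langle A_{\bar\jmath}e,A_{\bar k}e\rangle_{h_S}\bigr)$ is Hermitian positive semidefinite, so $\tfrac{\I}{2\pi}A^*\wedge A$ is a nonnegative $(1,1)$-form (equivalently, this is the Griffiths statement read off from~\eqref{eq:curvature_secondfundform} that the curvature of the quotient metric dominates the one induced from $E$). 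Hence $\bigl(\tfrac{\I}{2\pi}\bigr)^2(A^*\wedge A)^2=\bigl(\tfrac{\I}{2\pi}A^*\wedge A\bigr)^{\wedge 2}$ is a nonnegative $(2,2)$-form and $I\geq 0$. Combining, $L-I\geq 2I-I=I\geq 0$, which proves~\eqref{eq:MAstable_subbundle}. If equality holds there then $L=I$; together with $L\geq 2I$ and $I\geq 0$ this forces $I=0$ and $L=0=2I$, hence $A=0$ by Lemma~\ref{lemma:subsol1}, so the smooth orthogonal splitting $E=S\oplus S^{\perp}$ is $\delbar_E$-invariant and $S$ splits off $E$ holomorphically. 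Conversely, if $E\cong S\oplus Q$ then $Q$ is also a corank-$1$ sub-bundle, and~\eqref{eq:MAstable_subbundle} applied to both $S$ and $Q$, together with $\mu_{MA,\vartheta}(S)+\mu_{MA,\vartheta}(Q)=2\mu_{MA,\vartheta}(E)$, forces both inequalities to be equalities.

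Finally let $\rk(E)=2$. A coherent saturated subsheaf $S\subset E$ of rank~$1$ is reflexive, hence a line bundle on the smooth surface $X$, and is a sub-bundle away from a finite set $Z\subset X$; running the argument above on $X\setminus Z$ and expressing $\Chern_\bullet(S)$ and $\Chern_\bullet(E/S)$ through Chern--Weil integrals of the induced metrics (which are singular near $Z$ when $Z\neq\emptyset$), exactly as in the proof of~\cite[Proposition~$3.1$]{Pingali_vbMA}, one again obtains $\mu_{MA,\vartheta}(S)\leq\mu_{MA,\vartheta}(E)$, with equality forcing a holomorphic splitting $E\cong S\oplus(E/S)$. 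Since $E$ is simple it is indecomposable, so the equality case never occurs and $\mu_{MA,\vartheta}(S)<\mu_{MA,\vartheta}(E)$ for every such $S$, i.e. $E$ is Monge-Amp\`ere stable. The step I expect to be the main obstacle is the sign of $I$: Lemma~\ref{lemma:subsol1} alone only yields a slope inequality off by a factor of~$2$, and it is precisely the rank hypothesis ($\rk(Q)=1$, equivalently $\rk(E)=2$ for the stability statement) that makes $I\geq0$ and lets the estimate close — lifting this is exactly what one would need to extend Theorem~\ref{thm:Zcrit_stability} to higher rank. The additional bookkeeping for saturated subsheaves that are not sub-bundles is routine but technical, and I would simply follow~\cite{Pingali_vbMA} for it.
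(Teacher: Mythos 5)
Your proposal is correct and follows essentially the same route as the paper: reduce the slope inequality to $L\geq I$, invoke Lemma~\ref{lemma:subsol1} for $L\geq 2I$ with equality iff $A=0$, and observe that for a corank-$1$ sub-bundle $\I A^*\wedge A$ is a semipositive scalar $(1,1)$-form so that $I\geq 0$ (the paper verifies this by the same Gram/Cauchy--Schwarz computation you phrase as positive semidefiniteness of the Gram matrix), then defer the passage from sub-bundles to saturated subsheaves in rank $2$ to the argument of~\cite[Proposition~$3.1$]{Pingali_vbMA}. No gaps.
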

\begin{proof}
    Assuming that the second fundamental form~$A$ satisfies~$\I^2\Tr((A^*\wedge A)^2)\geq 0$, the first inequality in Lemma~\ref{lemma:subsol1} is stronger than~\eqref{eq:vbMA_stability_keycondition}, so Lemma~\ref{lemma:subsol1} will imply the first part of the Lemma.
    
    We prove that~$\I^2\Tr((A^*\wedge A)^2)\geq 0$ if~$\rk(S)=\rk(E)-1$. Note first that in this case~$A^*\wedge A$ is just a~$(1,1)$-form. In an orthonormal local frame $(f_1,\dots,f_r)$ of $E$ such that $\{f_1,\dots,f_{r-1}\}$ spans $S$, we can write~$A=A_{\bar{a}}\dd\bar{z}^a$ for matrices~$A_{\bar{a}}$, and~$A^*=A_{\bar{a}}^*\dd z^a$; hence,
    \begin{equation}
        \left(\I\,A^*\wedge A\right)^2=2\left(A_{\bar{1}}^*A_{\bar{1}}A_{\bar{2}}^*A_{\bar{2}}-A_{\bar{1}}^*A_{\bar{2}}A_{\bar{2}}A_{\bar{1}}^*\right)\I\dd z^1\wedge\dd\bar{z}^1\wedge\I\dd z^2\wedge\dd\bar{z}^2,
    \end{equation}
    so it will be sufficient to show that~$A_{\bar{1}}^*A_{\bar{1}}A_{\bar{2}}^*A_{\bar{2}}-A_{\bar{1}}^*A_{\bar{2}}A_{\bar{2}}A_{\bar{1}}^*\geq 0$. If we write~$A_{\bar{1}}=(x^1,\dots,x^s)^\transpose$ and~$A_{\bar{2}}=(y^1,\dots,y^s)^\transpose$ we get
    \begin{equation}
        \sum_{i,j}\abs{x^i}^2\abs{y^j}^2-\sum_{i,j}\bar{x}^iy^ix^j\bar{y}^j=\sum_{i\not=j}\abs{x^i}^2\abs{y^j}^2-\sum_{i\not=j}\bar{x}^iy^ix^j\bar{y}^j=\sum_{i\not=j}\abs{\bar{x}^iy^i-x^j\bar{y}^j}^2\geq 0.
    \end{equation}
    This together with Lemma \ref{lemma:subsol1} implies \eqref{eq:vbMA_stability_keycondition}, unless $A=0$; but in that case $E$ splits as a direct sum of $S$ and its orthogonal complement, against our assumption.
    
    For a rank $2$ bundle, the inequality of Monge-Ampère slopes for general (co-)rank $1$ subsheaves follows as in the proof of~\cite[Proposition~$3.1$]{Pingali_vbMA}.
\end{proof}

\begin{proof}[Proof of Theorem~\ref{thm:Zcrit_stability}]
    Assume that~$h\in\Hermmetric(E)$ is~$\Zch$-positive and~$\Zch$-critical, and assume that~$S\subset E$ is a sub-bundle. By Proposition~\ref{prop:stability_comparison}, we want to establish
    \begin{equation}
        \alpha\left(\mu_{MA,\vartheta}(S)-\mu_{MA,\vartheta}(E)\right)<0.
    \end{equation}
    First assume that~$\alpha>0$, so that the inequality amounts to~$\mu_{MA,\vartheta}(S)<\mu_{MA,\vartheta}(E)$, i.e. Monge-Ampère stability. But in this case~$h$ is~$\Zch$-positive if and only if it is Monge-Ampère positive, so Corollary~\ref{cor:rank2_MAstability} allows to conclude, even if~$S$ is just a saturated subsheaf of~$E$.

    If instead~$\alpha$ is negative, we must prove that~$\mu_{MA,\vartheta}(S)>\mu_{MA,\vartheta}(E)$. As~$h$ is~$\Zch$-positive and~$\alpha<0$,~$h$ is Monge-Ampère \emph{negative}, and by Lemma~\ref{lemma:subsol1} then we deduce that~$\mu_{MA,\vartheta}(S)>\mu_{MA,\vartheta}(E)$ arguing as in Corollary~\ref{cor:rank2_MAstability}. Note however that the proof of~\cite[Proposition~$3.1$]{Pingali_vbMA} does not allow us to extend this last inequality to also consider subsheaves of~$E$ when~$\alpha<0$.
\end{proof}

\begin{rmk}\label{rmk:corank_1_stability}
    Corollary \ref{cor:rank2_MAstability} also gives an obstruction to the existence of $\Zch$-critical and $\Zch$-positive metrics for bundles of arbitrary degree. If $E$ satisfies~\eqref{eq:Z_alphapositive}, $S\subset E$ is a sub-bundle of corank~$1$, and~$h\in\Hermmetric(E)$ is $\Zch$-positive and~$\Zch$-critical, then
    \begin{equation}
        \Im\left(\overline{\Zch_X(E)}\Zch_X(S)\right)\leq 0,
    \end{equation}
    with equality if and only if~$E$ splits. The same conclusion applies if instead $E$ satisfies the opposite inequality in \eqref{eq:Z_alphapositive}, i.e.~$\Im(\overline{\Zch_X(E)}\,\rho_0)$ is negative, and~$\rk(S)=1$.
\end{rmk}

\begin{proof}[Proof of Theorem~\ref{thm:Zcrit_positivity}]
    The first statement is given by Lemma~\ref{lemma:subsol_positiveclass}, which is a special case of~\cite[Theorem~$1.6$]{McCarthy_thesis}. For the second part, we actually prove a slightly stronger result that does not depend on the rank of~$E$.
    \begin{lemma}\label{lemma:Zpositivity_rank1quotient}
        Let~$\Zch$ be a polynomial central charge, and assume that~$E\to X$ is a vector bundle on the projective surface~$X$ that satisfies~\eqref{eq:Z_alphapositive}. If there exists a~$\Zch$-positive metric~$h\in\Hermmetric(E)$, then
        \begin{equation}
            \Im\left(\overline{\Zch_X(E)}\Zch_V(Q)\right)>0
        \end{equation}
        for any~$1$-dimensional analytic (irreducible and reduced) subvariety~$V\subset X$ and any torsion-free rank~$1$ quotient~$Q$ of~$E_{\restriction V}$.
    \end{lemma}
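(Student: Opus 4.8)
The plan is to turn the claimed inequality into a lower bound for the degree of $Q$ over $V$, and then to extract that bound from the curvature of the metric induced on $Q$, using $\Zch$-positivity of $h$ as the source of strict positivity. Write $\vartheta=[\beta/2\alpha]$. Since $\rk(Q)=1$, a direct computation from the definitions of $\Zch_V$ and of $\alpha,\beta$ in~\eqref{eq:abc_coeff}—using $[\beta]=\abs{\Zch_X(E)}^{-1}\Im\!\big(\overline{\Zch_X(E)}(\rho_0U_1+\rho_1[\omega])\big)$ and $\Im\!\big(\overline{\Zch_X(E)}\rho_0\big)=2\alpha\abs{\Zch_X(E)}$ (c.f.~Remark~\ref{rmk:strong_Zpositive})—gives $\Im\!\big(\overline{\Zch_X(E)}\,\Zch_V(Q)\big)=2\alpha\abs{\Zch_X(E)}\big(\deg_V(Q)+\vartheta.V\big)$. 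As $\alpha>0$ by~\eqref{eq:Z_alphapositive} (Remark~\ref{rmk:strong_Zpositive}) and $\abs{\Zch_X(E)}>0$, the lemma is equivalent to the inequality $\deg_V(Q)+\vartheta.V>0$.

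I would first prove this when $V$ is smooth, so that $Q$ is a line bundle and $S:=\ker(E_{\restriction V}\to Q)$ is a sub-bundle. Equip $Q$ with the quotient metric $h_Q$ induced by $h_{\restriction V}$, with second fundamental form $A\in\Alt^{0,1}(V,\Hom(Q,S))$, and let $e_Q$ be a local unit lift of a frame of $Q$ into $E_{\restriction V}$. The block-matrix curvature identity of Section~\ref{sec:positivity_stability} (the one preceding~\eqref{eq:curvature_secondfundform}) is a local differential-geometric identity, hence restricts to $V$, and reading off its $(Q,Q)$-entry gives, on $V$,
\begin{equation}
\curvform(h_Q)=\langle\curvform_{\alpha,\beta}(h)\,e_Q,e_Q\rangle-\frac{\beta}{2\alpha}+\frac{\I}{2\pi}A^*\wedge A .
\end{equation}
On a curve $\tfrac{\I}{2\pi}A^*\wedge A$ is a nonnegative $(1,1)$-form, so integrating and using $\deg_V(Q)=\int_V\curvform(h_Q)$ together with $\tfrac1{2\alpha}\int_V\beta=\vartheta.V$ yields $\deg_V(Q)\geq\int_V\langle\curvform_{\alpha,\beta}(h)\,e_Q,e_Q\rangle-\vartheta.V$.

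It then remains to see that $\Zch$-positivity of $h$ forces the $(1,1)$-form $\langle\curvform_{\alpha,\beta}(h)\,e,e\rangle$ to be strictly positive for every nonzero $e\in E_x$. This is the computation in the proof of Lemma~\ref{lemma:subsol_positiveclass}, but with the test endomorphism $\id_E$ replaced by $e\otimes e^*$: inserting $\xi=v\otimes(e\otimes e^*)$ into the $\Zch$-positivity inequality of Definition~\ref{ZposMetric}, rewritten as in~\eqref{eq:positivity_comparison}, and dividing by $\alpha>0$ gives $\I\,\bar v\wedge v\wedge\langle\curvform_{\alpha,\beta}(h)\,e,e\rangle>0$ for every nonzero $v\in T^{0,1}_x{}^*X$, i.e.\ positivity of $\langle\curvform_{\alpha,\beta}(h)\,e,e\rangle$. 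Applying this pointwise along $V$ with $e=(e_Q)_x$ shows that $\langle\curvform_{\alpha,\beta}(h)\,e_Q,e_Q\rangle$ restricts to a strictly positive area form on $V$, so $\int_V\langle\curvform_{\alpha,\beta}(h)\,e_Q,e_Q\rangle>0$ and $\deg_V(Q)+\vartheta.V>0$, which settles the smooth case.

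For a possibly singular (irreducible, reduced) curve $V$, I would pass to the normalisation $\nu\colon\tilde V\to V$: the pulled-back metric $\nu^*(h_{\restriction V})$ is again $\Zch$-positive (the condition is pointwise), $\bar Q:=\nu^*Q/\tors$ is a line-bundle quotient of $\nu^*E_{\restriction V}$ on the smooth curve $\tilde V$, and $\int_{\tilde V}\nu^*\beta=[\beta].V$ so that $\vartheta.V$ is unchanged. The smooth case on $\tilde V$ gives $\deg_{\tilde V}(\bar Q)+\vartheta.V>0$, and one concludes with the standard comparison $\deg_V(Q)\geq\deg_{\tilde V}(\nu^*Q/\tors)$ for torsion-free rank~$1$ sheaves on an integral curve. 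The step I expect to be most delicate is the positivity input of the third paragraph: $\Zch$-positivity is a condition on deformations of the holomorphic structure, whereas the degree estimate needs strict positivity of $\curvform_{\alpha,\beta}(h)$ contracted with the \emph{specific} vector $e_Q$ and restricted \emph{along} $V$, so keeping the signs and the $[\,\cdot\,]_{\sym}$ / anti-self-adjoint-part conventions of Definition~\ref{ZposMetric} straight is where an error is most likely to slip in; the normalisation reduction is routine but requires the degree comparison to be quoted with the correct direction of inequality.
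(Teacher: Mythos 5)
Your proof is correct and follows essentially the same route as the paper's: reduce to the degree inequality $\deg_V(Q)+\vartheta.V>0$, use the second-fundamental-form decomposition together with the favourable sign of $\I\,A^*\wedge A$ on a curve, and extract positivity of the $Q$-block by testing $\Zch$-positivity against endomorphisms supported on $Q$ (your $\xi=v\otimes(e_Q\otimes e_Q^*)$ is, up to normalisation, the paper's choice of $\xi$ in the image of $\End(Q)\subset\End(E)$). The only organisational difference is in the singular case, where you run Gauss--Codazzi directly with the quotient metric on $\nu^*Q/\tors$ over the normalisation instead of extending the pulled-back metric across the torsion points as the paper does; this is an equivalent (arguably slightly cleaner) way to finish, and the degree comparison $\deg_V(Q)\geq\deg_{\tilde V}(\nu^*Q/\tors)$ is quoted in the correct direction.
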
    
    First, we claim that the inequality~$\Im\left(\overline{\Zch_X(E)}\Zch_V(Q)\right)>0$ is equivalent to
    \begin{equation}\label{eq:quotient_positive_V}
        \left(2\alpha\,\chern_1(Q)+[\beta]\right).V>0.
    \end{equation}
    Indeed, using~\eqref{eq:abc_coeff} we compute:    \begin{equation}\label{eq:positivity_quotient_accessory}
        \begin{split}
            \Im\left(\overline{\Zch_X(E)}\Zch_V(Q)\right)=&\Im\left(\overline{\Zch_X(E)}\left(\rho_0\,\Chern_1(Q).V+\left(\rho_0\, U_1+\rho_1\,H\right).V\right)\right)=\\
            =&\abs{\Zch_X(E)}^{-1}\left(2\alpha\,\Chern_1(Q).V+[\beta].V\right)
        \end{split}
    \end{equation}
    which is positive if and only if~\eqref{eq:quotient_positive_V} is satisfied.

    Second, we prove that under the hypotheses of Lemma~\ref{lemma:Zpositivity_rank1quotient} and assuming that~$V$ is smooth we have~\eqref{eq:quotient_positive_V}. Recall that~$E_{\restriction V}$ is an extension as in~\eqref{eq:ses},
    \begin{equation}\label{eq:ses_subvariety}
        0\rightarrow S\rightarrow E_{\restriction V}\rightarrow Q\rightarrow 0,
    \end{equation}
    with~$Q$ torsion free of rank~$1$. But torsion free sheaves over a normal variety are smooth outside a codimension~$2$ Zariski closed subset, so~$Q\to V$ is actually a line bundle since~$V$ is a smooth curve.
    
    We can find a small open neighbourhood~$U$ of any~$x$ in~$X$ such that~\eqref{eq:ses_subvariety} extends to~$U$, so on~$U$ we have the decomposition~\eqref{eq:curvature_secondfundform} for the curvature of~$h$ in terms of the curvature forms of~$h_S$,~$h_Q$, and the second fundamental form~$A\in\Alt^{0,1}(U,\Hom(Q,S))$. We know that for any~$p\in U$ and any nonzero~$\xi\in T^{0,1}X{}^*_p\otimes\End(E_p)$
    \begin{equation}
        \I\Tr\left[ \xi^*\wedge\xi\wedge\left(2\alpha\,\curvform(h)+\beta\otimes\id_E\right)\right]_{\sym}>0.
    \end{equation}
    If we choose~$\xi$ in the image of the inclusion~$\End(Q)\subset\End(E)$, from~\eqref{eq:curvature_secondfundform} we find
    \begin{equation}\label{eq:Zpos_quotient}
        \I\Tr\left[\xi^*\wedge\xi\wedge\left(2\alpha\,\curvform(h_Q)+\beta\otimes\id_Q-2\alpha\frac{\I}{2\pi}A^*\wedge A\right)\right]_{\sym}>0.
    \end{equation}
    As~$Q$ has rank~$1$, we can simplify this as
    \begin{equation}
        2\I\,\xi^*\wedge\xi\wedge\left(2\alpha\,\curvform(h_Q)+\beta\right)-4\alpha\,\I\,\xi^*\wedge\xi\wedge\frac{\I}{2\pi}A^*\wedge A>0
    \end{equation}
    and we claim that~$\alpha\,\I\,\xi^*\wedge\xi\wedge\frac{\I}{2\pi}A^*\wedge A\geq 0$ for any~$(0,1)$-form~$A$ and any~$\xi$. To show this, choose a coframe~$\zeta^1,\zeta^2$ for~$T^{1,0}{}^*X$ such that~$\xi^*=\zeta^1$, and compute
    \begin{equation}
        \I\,\xi^*\wedge\xi\wedge\,\I\,A^*\wedge A=\abs{A_2}^2\I\zeta^1\wedge\zeta^1\wedge\I\zeta^2\wedge\zeta^2.
    \end{equation}
    When~$\alpha>0$, which is guaranteed by~\eqref{eq:Z_alphapositive}, we conclude that at any point of~$V$
    \begin{equation}\label{eq:curvature_ineq_local}
        2\alpha\,\curvform(h_Q)+\beta>0,
    \end{equation}
    and in particular we obtain~\eqref{eq:quotient_positive_V}. So we have shown that Lemma~\ref{lemma:Zpositivity_rank1quotient} (and Theorem~\ref{thm:Zcrit_positivity}) holds for a smooth curve~$V\subset X$, even without the projectivity assumption.

    Lastly, assume now that~$X$ is a projective surface and~$V\subset X$ is a reduced and irreducible curve. Note first that the singular set of the torsion-free sheaf~$Q$, i.e. the set where~$Q$ is not locally free, is included in the singular set~$\sing(V)$ of~$V$, so that we still have~\eqref{eq:curvature_ineq_local} on the smooth locus of~$V$. To make sense of the expression~\eqref{eq:quotient_positive_V}, recall that~$\Chern_1(Q).V$ denotes the degree of~$Q$ over~$V$.

    We denote by~$\pi:\hat{V}\to V$ the normalization of~$V$, which is a finite morphism and an isomorphism on the smooth locus of~$V$. On~$\hat{V}$, the coherent sheaf~$\pi^*Q$ will in general have torsion, but~$Q'=\pi^*Q/\tors(\pi^*Q)$ is locally free of rank~$1$. Moreover, the projection formula implies~$\deg_V(Q)\geq\deg_{\hat{V}}(Q')$, see for example~\cite[proof of Corollary~$1.2.24$]{Lazarsfeld_positivity_1} or~\cite[proof of Lemma~$1$]{Eisenbud_deteqs_curves}, so it will be sufficient to show~\eqref{eq:quotient_positive_V} for~$Q'$ over~$V'$.

    Over the preimage of~$V\setminus\sing(V)$ in~$\hat{V}$, consider the pulled-back metric~$\pi^*(h_Q)$ on~$\pi^*Q\simeq Q'\oplus \tors(\pi^*Q)$. Recall that~$\tors(\pi^*Q)$ is supported on a proper subvariety of~$\hat{V}$, i.e. a finite number of closed points~$p_1,\dots,p_s\in\hat{V}$. We consider some small analytic neighbourhoods~$(N_i)_{i=1,\dots,s}$ of these points, and the metric~$\hat{h}$ defined by~$\pi^*(h_Q)$ on~$\hat{V}\setminus\{p_1,\dots,p_s\}$ can be smoothly extended over~$\cup_{i=1}^s N_i$ in such a way that the extension still satisfies~$2\alpha\curvform(\hat{h})+\beta\geq 0$. Since~$Q'$ has rank one, we obtain again by~\eqref{eq:curvature_ineq_local}
    \begin{equation}
        2\alpha\,\deg_{\hat{V}}(Q') +[\beta].\hat{V} \geq  \int_{\hat{V}\setminus\cup_{i=1}^s N_i}\left(2\alpha\,\curvform(\hat{h})+\beta\right)>0.
    \end{equation}
    As~$\deg_V(Q)\geq\deg_{\hat{V}}(Q')$, the assumption~\eqref{eq:Z_alphapositive} that~$\alpha>0$ then implies~\eqref{eq:quotient_positive_V}.
\end{proof}

\begin{rmk}\label{rmk:subbundles} 
    Note that if instead the opposite of~\eqref{eq:Z_alphapositive} holds, i.e.~$\alpha<0$, the same reasoning shows the inequality~\eqref{eq:quotient_positive_V} for (rank~$1$) \emph{subsheaves}~$S\subset E_{\restriction V}$ rather than quotients, starting from the analogue of the inequality~\eqref{eq:Zpos_quotient} that holds for subbundles.
\end{rmk}

\subsection{Other stability conditions}\label{sec:stabilities}

The main result of~\cite{DervanMcCarthySektnan} is the proof of a correspondence between the existence of solutions of~\eqref{eq:Zcrit} and~$\Zch$-stability in an asymptotic regime known as the \emph{large volume limit}, over K\"ahler manifolds of arbitrary dimension. More precisely, they prove (for simple bundles, but the argument applies directly to irreducible bundles) the following.
\begin{thm}[\cite{DervanMcCarthySektnan}]\label{thm:asymptotic_stability}
    Let~$Z$ be a polynomial central charge, and assume that~$E$ is irreducible and \emph{sufficiently smooth}. Then~$E$ admits a family~$\{h_k\}$ of uniformly bounded (in the~$C^2$-norm)~$\Zch_k$-critical metrics for all~$k\gg 0$ if and only if~$E$ is \emph{asymptotically~$\Zch$-stable}, i.e. it is~$\Zch_k$-stable with respect to sub-bundles for all~$k\gg 0$.
\end{thm}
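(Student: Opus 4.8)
This is a Kobayashi--Hitchin-type correspondence valid in the asymptotic regime $k\gg0$, and the plan is to prove the two implications separately, guided by the moment-map picture of Section~\ref{sec:subsolutions}: the $\Zch_k$-critical equation is the vanishing of a moment map for the gauge group acting on the space of $\Zch_k$-subsolution connections, so that existence of zeros should match a Geometric Invariant Theory stability which, in the large volume limit, unfolds into the numerical inequalities defining asymptotic $\Zch$-stability.

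For ``existence $\Rightarrow$ asymptotic $\Zch$-stability'' I would fix $k$ large and a sub-bundle $S\subset E$ with torsion-free quotient $Q$, let $\gamma\in\Alt^{0,1}(\Hom(Q,S))$ be the second fundamental form of $S$ with respect to the $\Zch_k$-critical metric $h_k$, and decompose $\curvform(h_k)$ as in~\eqref{eq:curvature_secondfundform}. Restricting $\Im(\e^{-\I\vartheta_E}\Zdiff_k(h_k))=0$ to $S$, taking the trace and integrating over $X$ should yield, schematically,
\begin{equation}
\Im\left(\frac{\Zch_{k,X}(S)}{\Zch_{k,X}(E)}\right)=-\,c_k\int_X\I\Tr\left[\Im\left(\e^{-\I\vartheta_E}\Zdiff'_k(h_k)\right)\wedge\gamma^*\wedge\gamma\right]_{\sym}+(\text{lower-order terms in }\gamma),
\end{equation}
with $c_k>0$. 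For $k\gg0$ the dominant contribution to $\Im(\e^{-\I\vartheta_E}\Zdiff'_k(h_k))$ is positive-definite in the sense of~\eqref{eq:subsolution}, which is where the normalisation $\Im(\rho_n)>0$ enters; combined with the assumed uniform $C^2$-bound on $h_k$ this forces the right-hand side to be $\leq0$, with strictness because $E$ is irreducible and $\gamma=0$ would produce a holomorphic splitting. This gives $\Zch_k$-stability against sub-bundles for all $k\gg0$.

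For the converse I would perturb around a Hermite--Einstein metric. First, asymptotic $\Zch$-stability forces $E$ to be Mumford semistable --- this is exactly the role of the condition $\Im(\rho_{n-1}/\rho_n)>0$, see Remark~\ref{rmk:largevolumelimit} --- so together with the ``sufficiently smooth'' hypothesis one may start from a (possibly only approximately) Hermite--Einstein metric $h_0$, which solves the $k=\infty$ limit of the rescaled equation. Writing $h=h_0\e^{\phi}$ with $\phi$ a trace-normalised Hermitian endomorphism, one expands $\Im(\e^{-\I\vartheta_E}\Zdiff_k(h))$ in powers of $k^{-1}$ and builds a formal solution $\phi_k\sim\sum_{j\geq1}k^{-j}\phi^{(j)}$: at each order $\phi^{(j)}$ is obtained by inverting the leading-order linearisation, which is a normalised perturbation of the $\End E$-Laplacian $\Delta_\omega$ and, by simplicity of $E$, an isomorphism onto the $L^2$-orthogonal complement of $\id_E$; the residual direction along $\id_E$ is automatically solvable because $\int_X\Tr\Im(\e^{-\I\vartheta_E}\Zdiff_k(h))=\Im(\e^{-\I\vartheta_E}\Zch_{k,X}(E))=0$ by the definition of $\vartheta_E$. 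One then truncates at a high enough order, corrects the remainder by a quantitative inverse function theorem, and verifies that the resulting solution stays bounded in $C^2$. The place where asymptotic $\Zch$-stability is genuinely used --- beyond Mumford semistability --- is in the uniform-in-$k$ control of the inverse of the linearisation: a failure of a uniform lower bound would, after rescaling and extracting a limit, produce either a destabilising subsheaf or an endomorphism violating simplicity, mirroring the finite-dimensional GIT argument.

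The hard part will be this last analytic step: obtaining a Fredholm estimate for the linearisation --- an operator that is elliptic only along the subsolution locus --- with constants uniform in $k$, together with a uniform bound on the formal solutions, so that the implicit function theorem closes up with $k$-independent constants. A secondary difficulty is making the equivalence ``vanishing of the higher-order obstructions $\iff$ $\Zch_k$-stability'' precise, which amounts to matching the perturbative expansion with the GIT/moment-map weights order by order; the ``sufficiently smooth'' hypothesis is what one uses to guarantee that this matching is unobstructed.
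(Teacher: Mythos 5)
The paper does not prove this statement: it is quoted verbatim (up to replacing ``simple'' by ``irreducible'') from Dervan--McCarthy--Sektnan, and the authors only record it here as background for Section~\ref{sec:stabilities}. So there is no internal proof to compare against, and your proposal has to be judged against the argument in the cited source.

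Your outline of the two implications matches the broad strategy of that source: the ``existence $\Rightarrow$ stability'' direction is indeed a second-fundamental-form computation in which the uniform $C^2$-bound guarantees that the leading, positive-definite part of $\Im(\e^{-\I\vartheta_E}\Zdiff'_k(h_k))$ dominates for $k\gg0$, and the ``stability $\Rightarrow$ existence'' direction is a perturbation in powers of $k^{-1}$ from a Hermite--Einstein-type reference metric, closed up by the implicit function theorem. However, your description of \emph{where} asymptotic $\Zch$-stability enters the existence direction is not how the argument actually runs, and as stated it would not close up. The hypothesis ``sufficiently smooth'' means that the graded object of a Jordan--H\"older filtration of the Mumford-semistable bundle $E$ is a direct sum of stable \emph{vector bundles}; one starts from the Hermite--Einstein metric on $\mathrm{Gr}(E)$, not on $E$, and the genuine difficulty is that the linearisation has a kernel spanned by the automorphisms of $\mathrm{Gr}(E)$, which does not go away as $k\to\infty$. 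The $\Zch_k$-stability inequalities for the sub-bundles of the filtration are used to choose the gluing parameters (the extension classes, rescaled appropriately in $k$) so that the finite-dimensional obstructions in those kernel directions have the right sign and can be solved; they are \emph{not} used to produce a uniform lower bound on the inverse of the linearisation, and the ``rescale and extract a destabilising subsheaf'' mechanism you invoke belongs to a priori estimate arguments for a fixed equation, not to this perturbative setting. Relatedly, your first step ``asymptotic $\Zch$-stability forces Mumford semistability, so one may start from an approximately Hermite--Einstein metric on $E$'' skips the essential case: when $E$ is strictly semistable no Hermite--Einstein metric on $E$ exists at all, which is precisely why the construction must pass through $\mathrm{Gr}(E)$. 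As a reconstruction of the cited proof your sketch is therefore incomplete in the one place where the theorem is nontrivial.
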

A few comments about Theorem \ref{thm:asymptotic_stability} and how it relates to Conjecture~\ref{conj:Zstable} and Conjecture~\ref{conj:Zpositive} are in order. A difference between~\cite{DervanMcCarthySektnan} and the present work is that~$\Zch$-stability in~\cite{DervanMcCarthySektnan} is stated in terms of the arguments of~$\Zch_{X,k}(S)$ and~$\Zch_{X,k}(E)$, rather than the imaginary part of the ratio of these two complex numbers, for any subbundle~$S\subset E$. However, the assumption~$\Im(\rho_n)>0$ implies that, for~$k$ sufficiently large,~$\Zch_k(N)$ lies in the upper-half plane for any vector bundle~$N\to X$. Hence, the two inequalities
\begin{equation}
    \begin{gathered}
        \arg(\Zch_{X,k}(S))<\arg(\Zch_{X,k}(E)),\\
        \Im\left(\frac{\Zch_{X,k}(S)}{\Zch_{X,k}(E)}\right)<0,
    \end{gathered}
\end{equation}
are actually equivalent, for~$k\gg 0$. Also, any Hermitian metric is~$\Zch_k$-positive for~$k$ very large, so that Theorem~\ref{thm:asymptotic_stability} can be seen as a confirmation of Conjecture~\ref{conj:Zstable}. Note however that the bundle is not necessarily~$\Zch_k$-positive for all~$k\gg 0$: it is however true that~$\Im\left(\Zch_{V,k}(E_{\restriction V})\overline{\Zch_{X,k}(E)}\right)>0$ for any codimension~$1$ analytic subvariety~$V\subset X$, and in particular any bundle over a surface is asymptotically~$\Zch$-positive, see Lemma~\ref{lemma:Zpositive_asymptotic} below. This may be considered as further evidence for the fact that Conjecture~\ref{conj:Zpositive} should only be expected to hold under some \emph{critical phase condition} for the charge. This hypothetical critical phase condition will likely \emph{not} be satisfied in the large volume limit, i.e. for a very large scale parameter~$k$: for the deformed Hermitian Yang-Mills equation in rank~$1$, which is essentially the only situation where the phase condition is well-understood, the large volume limit is always \emph{sub-critical} in dimension at least~$3$.

\begin{lemma}\label{lemma:Zpositive_asymptotic}
    Fix a polynomial central charge~$\Zch$ and a bundle~$E\to X$ over a K\"ahler manifold of dimension~$n$. Assume that the coefficients~$\{\rho_1,\dots,\rho_n\}$ all lie in the same half plane. Then for any analytic subset~$V\subset X$ with~$0<\dim(V)<\dim(X)$ and any torsion-free quotient~$Q$ of~$E_{\restriction V}$ (possibly equal to~$E_{\restriction V}$), there is~$k_0$ such that for every~$k>k_0$
    \begin{equation}
        \Im\left(\frac{\Zch_{V,k}(Q)}{\Zch_{X,k}(E)}\right)>0,
    \end{equation}
    that is,~$E$ is asymptotically~$\Zch$-positive and~$\Zch$-positive for quotients. In particular, any vector bundle on a K\"ahler surface is asymptotically~$\Zch$-positive and~$\Zch$-positive for quotients for any polynomial central charge.
\end{lemma}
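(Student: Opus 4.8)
The plan is to determine the leading-order behaviour in~$k$ of~$\Zch_{V,k}(Q)$ and~$\Zch_{X,k}(E)$ and thereby reduce the statement to an elementary fact about the stability vector~$\rho$. Write~$d=\dim V$. Rescaling the K\"ahler form by~$\omega\mapsto k\omega$ (keeping the unipotent class~$U$ fixed),~$\Zch_{V,k}(Q)$ is obtained by integrating over~$V$ the degree-$2d$ component of~$\bigl(\sum_j\rho_j k^j[\omega]^j\bigr)\,U\,\Chern(Q)\in H^\bullet(X,\RR)$; in this component the only term carrying~$k^d$ is~$\rho_d k^d[\omega]^d\cdot\rk(Q)$, since the degree-$0$ parts of~$U$ and of~$\Chern(Q)$ are~$1$ and~$\rk(Q)$. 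Hence
\[
    \Zch_{V,k}(Q)=\rho_d\,([\omega]^d.V)\,\rk(Q)\,k^d+O(k^{d-1}),\qquad \Zch_{X,k}(E)=\rho_n\,([\omega]^n.X)\,\rk(E)\,k^n+O(k^{n-1}).
\]
As~$[\omega]$ is K\"ahler the intersection numbers~$[\omega]^d.V$ and~$[\omega]^n.X$ are strictly positive, the ranks are positive, and~$\rho_n\neq 0$, so~$\Zch_{X,k}(E)\neq 0$ for~$k\gg0$ and, absorbing all lower-order terms into a factor~$1+O(k^{-1})$,
\[
    \frac{\Zch_{V,k}(Q)}{\Zch_{X,k}(E)}=\frac{\rho_d}{\rho_n}\cdot\frac{([\omega]^d.V)\,\rk(Q)}{([\omega]^n.X)\,\rk(E)}\,k^{\,d-n}\bigl(1+O(k^{-1})\bigr).
\]
Therefore~$\Im\bigl(\Zch_{V,k}(Q)/\Zch_{X,k}(E)\bigr)$ has the sign of~$\Im(\rho_d/\rho_n)$ once~$k$ is large, and the whole statement comes down to proving~$\Im(\rho_d/\rho_n)>0$ for every~$1\leq d\leq n-1$.

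This numerical inequality is the only genuine point. Since~$\Zch$ is a polynomial central charge we have~$\Im(\rho_j/\rho_{j+1})>0$ for~$0\leq j<n$, and by hypothesis there is~$\phi\in\RR$ with~$\Re(\e^{-\I\phi}\rho_j)>0$ for~$j=1,\dots,n$; put~$\theta_j:=\arg(\e^{-\I\phi}\rho_j)\in(-\pi/2,\pi/2)$. For~$1\leq j<n$ the difference~$\theta_j-\theta_{j+1}$ lies in~$(-\pi,\pi)$ and has~$\sin(\theta_j-\theta_{j+1})>0$, hence~$\theta_j-\theta_{j+1}\in(0,\pi)$; chaining these gives~$\theta_1>\theta_2>\dots>\theta_n$, all within an interval of length~$\pi$, so~$\theta_d-\theta_n\in(0,\pi)$ and~$\Im(\rho_d/\rho_n)=\abs{\rho_d/\rho_n}\,\sin(\theta_d-\theta_n)>0$. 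Combined with the expansion above, this produces the required~$k_0$ (which depends on~$V$ and~$Q$ only through the lower-order coefficients), so~$E$ is asymptotically~$\Zch$-positive and~$\Zch$-positive for quotients.

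For the surface case~$n=2$, the only subvarieties with~$0<\dim V<2$ are curves, so~$d=1$; and~$\Im(\rho_1/\rho_2)>0$ already forces~$\rho_1/\rho_2\notin\RR_{\leq 0}$, so~$\rho_1$ and~$\rho_2$ automatically lie in a common open half-plane through the origin and the extra hypothesis is vacuous there. The main, and essentially only, obstacle is to recognise why the common half-plane assumption cannot be dropped: without it the arguments of~$\rho_1,\dots,\rho_n$ could wind by more than~$\pi$ when~$n\geq 3$, and then~$\Im(\rho_d/\rho_n)$ could be negative even though every consecutive ratio~$\rho_j/\rho_{j+1}$ has positive imaginary part; everything else is bookkeeping and a degree count.
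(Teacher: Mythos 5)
Your proposal is correct and follows essentially the same route as the paper: expand both charges to leading order in~$k$, observe that the sign of the imaginary part of the ratio is governed by~$\Im(\rho_d\bar\rho_n)$, and use the common half-plane hypothesis to order the arguments of~$\rho_1,\dots,\rho_n$ within an interval of length~$\pi$ (the paper rotates them into the upper half-plane and uses principal arguments, you rotate them to have positive real part — the same argument). Your closing remarks on why the hypothesis is needed for~$n\geq 3$ and vacuous for~$n=2$ match the paper's discussion as well.
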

\begin{proof}
    Assume that~$V\subset X$ has dimension~$p>0$. By definition of a polynomial central charge, we have
    \begin{equation}
        \begin{split}
            \Zch_{X,k}(E)=&\rk(E)\,k^n\,\rho_n[\omega]^n+O(k^{n-1})\\
            \Zch_{V,k}(Q)=&\rk(Q)\,k^p\rho_p\,[\omega]^p.[V]+O(k^{p-1}).
        \end{split}
    \end{equation}
    The imaginary part of the ratio has the same sign as~$\Im\left(\overline{\Zch_{X,k}(E)}\,\Zch_{V,k}(Q)\right)$, and we can expand this as
    \begin{equation}\label{eq:asymptotic_exp_positivity}
        \Im\left(\overline{\Zch_{X,k}(E)}\,\Zch_{V,k}(Q)\right)=\rk(E)^2k^{n+p}[\omega]^n\,[\omega]^p.[V]\,\Im\left(\bar{\rho}_n\rho_p\right)+O(k^{n+p-1}).
    \end{equation}
    If~$\Im\left(\bar{\rho}_n\rho_p\right)>0$, the leading order term in this expression is positive. This condition is always true for~$p=n-1$ (by definition of a polynomial central charge) but might fail if~$V$ has higher codimension. Assume now that~$\rho_1,\dots,\rho_n$ all lie in the same half plane. Up to rotating the stability vector~$\rho$, we can assume that they all lie in the upper half plane. Since~$\Zch$ is a polynomial central charge,~$\Im\left(\bar{\rho}_j\rho_{j-1}\right)>0$ for all~$j=1,\dots,n$. In other words, using the principal value of the argument function,
    \begin{equation}
        \pi>\arg(\rho_1)>\arg(\rho_2)>\dots>\arg(\rho_{n-1})>\arg(\rho_n)>0,
    \end{equation}
    so that~$\Im\left(\rho_j\bar{\rho}_i\right)>0$ whenever~$i>j>0$.
\end{proof}

\begin{rmk}\label{rmk:positivity_noncritical}
    With the notation of Lemma~\ref{lemma:Zpositive_asymptotic}, it is clear that if the components of the stability vector~$\rho_i$ do not lie all in the same half-plane, the inequality~$\Im\left(\rho_j\bar{\rho}_i\right)>0$ might fail for some~$i>j>0$. It is however true that, for very large~$k$,
    \begin{equation}\label{eq:positivity_general}
        \Im\left(\frac{\rho_p}{\rho_n}\right)\Im\left(\frac{\Zch_{V,k}(Q)}{\Zch_{X,k}(E)}\right)>0
    \end{equation}
    provided that~$\Im\left(\rho_p\bar{\rho}_n\right)\not=0$. This shows that if the existence of~$\Zch$-positive metrics implies any inequalities between the charges of quotients of~$E_{\restriction V}$ and~$E$, in general these inequalities should depend on the codimension of~$V\subset X$ as in~\eqref{eq:positivity_general}, see also~\cite[Remark~$4.3.18$]{McCarthy_thesis}. For the rank~$1$ dHYM equation on the one-point blowup of~$\CC\PP^n$, a link between the existence of solutions and inequalities such as~\eqref{eq:positivity_general} has been investigated in~\cite{JacobSheu_BlPn}. Note however that for the dHYM charge~$\Im\left(\rho_i\bar{\rho}_j\right)=0$ if~$i$ and~$j$ have the same parity.
\end{rmk}

One could wonder if, instead of the~$\Zch$-stability condition in Conjecture~\ref{conj:Zstable}, one should also consider the~$\Zch$-stability of~$E_{\restriction V}$ for subvarieties~$V\subset X$ to characterise the existence of solutions of the~$\Zch$-critical equation, as was conjectured in~\cite{DervanMcCarthySektnan}. More precisely,~\cite[Conjecture~$1.6$]{DervanMcCarthySektnan} states that~$E$ should admit a~$\Zch$-critical connection if and only if the following condition holds:
\begin{definition}\label{def:Zstable_subvarieties}
    A bundle~$E\to X$ is \emph{$\Zch$-stable over subvarieties} if for any~$V\subset X$ and any proper short exact sequence~$0\to S\to E_{\restriction V}\to Q\to 0$ of coherent sheaves over~$V$,
    \begin{equation}\label{eq:stability_subbundles_subvarieties}
    \Im\left(\frac{Z_V(Q)}{Z_V(E_{\restriction V})}\right)>0.
\end{equation}
\end{definition}
The following result shows that, at least in our simple setting of bundles over surfaces, this condition is probably too strong.
\begin{lemma}\label{lemma:subvar_stability}
    A bundle~$E$ on a compact K\"ahler surface~$X$ is~$\Zch$-stable over subvarieties if and only if~$E$ is Mumford stable when restricted to any curve in~$X$.
\end{lemma}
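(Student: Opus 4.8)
The plan is to reduce the equivalence to a single computation carried out separately on each curve in~$X$. Since $X$ is a surface, the proper analytic subvarieties of $X$ are the curves, and the condition~\eqref{eq:stability_subbundles_subvarieties} over a $0$-dimensional subvariety is degenerate (for a point $x$ one has $\Zch_{\{x\}}(Q)/\Zch_{\{x\}}(E_x)=\rk(Q)/\rk(E)$, a positive real number, so its imaginary part is zero); thus only curves carry content, and it suffices to fix an irreducible reduced curve $V\subset X$ and analyse~\eqref{eq:stability_subbundles_subvarieties} there.

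First I would record the shape of the central charge on a curve: from the definition of $\Zch_V$, for any coherent sheaf $F$ on $V$,
\[
    \Zch_V(F)=\rk(F)\,c_V+\rho_0\,\deg_V(F),\qquad c_V:=\bigl(\rho_1[\omega]+\rho_0U_1\bigr).V\in\CC,
\]
so that $c_V$ depends only on $V$ and not on $F$. Writing $r=\rk(E)$ and $d=\deg_V(E_{\restriction V})$ and feeding this into a proper short exact sequence $0\to S\to E_{\restriction V}\to Q\to 0$ of coherent sheaves on $V$, a direct expansion gives
\[
    \Im\bigl(\Zch_V(Q)\,\overline{\Zch_V(E_{\restriction V})}\bigr)=\bigl(\rk(Q)\,d-r\,\deg_V(Q)\bigr)\,\Im\bigl(c_V\bar\rho_0\bigr)=r\,\rk(Q)\bigl(\mu(E_{\restriction V})-\mu(Q)\bigr)\,\Im\bigl(c_V\bar\rho_0\bigr),
\]
where $\mu(\cdot)=\deg_V(\cdot)/\rk(\cdot)$ is the Mumford slope on $V$, and $\Im(c_V\bar\rho_0)=[\omega].V\,\Im(\rho_1\bar\rho_0)$ because $U_1.V\,\abs{\rho_0}^2$ is real.

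Then I would carry out the sign analysis. As $\omega$ is Kähler, $[\omega].V>0$; as $\rho$ is a stability vector, $\Im(\rho_0/\rho_1)>0$, hence $\Im(\rho_0\bar\rho_1)>0$ and $\Im(\rho_1\bar\rho_0)<0$, so that $\Im(c_V\bar\rho_0)<0$. Taking $F=E_{\restriction V}$ in the formula above also shows $\Im\bigl(\Zch_V(E_{\restriction V})\bar\rho_0\bigr)=r\,\Im(c_V\bar\rho_0)\neq 0$, so $\Zch_V(E_{\restriction V})\neq 0$ and dividing by $\abs{\Zch_V(E_{\restriction V})}^2>0$ is legitimate. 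Hence, for quotients $Q$ with $\rk(Q)>0$,
\[
    \Im\left(\frac{\Zch_V(Q)}{\Zch_V(E_{\restriction V})}\right)>0\iff \mu(Q)>\mu(E_{\restriction V}),
\]
while if $\rk(Q)=0$, i.e.\ $Q$ is a non-zero torsion quotient, the left-hand side is automatically positive by the same bookkeeping (now $\deg_V(Q)>0$). Since $E_{\restriction V}$ is locally free, hence torsion-free over the integral scheme $V$, every non-zero subsheaf $S$ has $\rk(S)\geq 1$, so the positive-rank quotients that arise have $0<\rk(Q)<r$; passing to $Q/\tors(Q)$ shows it is enough to test torsion-free quotients. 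Thus imposing~\eqref{eq:stability_subbundles_subvarieties} for all proper short exact sequences over $V$ is exactly the assertion that $E_{\restriction V}$ is Mumford stable, and letting $V$ range over all curves in $X$ yields the lemma.

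I do not expect a serious obstacle: the substance is the two-line identity for $\Im\bigl(\Zch_V(Q)\overline{\Zch_V(E_{\restriction V})}\bigr)$ together with the sign of $\Im(\rho_1\bar\rho_0)$ and the Kähler positivity $[\omega].V>0$. The only points demanding a little care are the scope of the word ``subvariety''---one restricts to positive-dimensional proper subvarieties so that the condition is non-degenerate---and, should reducible or non-reduced curves be admitted, the reduction to the irreducible reduced case, which follows from additivity of $\Zch_V$ and of $\deg_V$ over the components of $V$.
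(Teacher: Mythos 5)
Your proof is correct and follows essentially the same route as the paper's: both reduce to the identity $\Im\bigl(\Zch_V(Q)\,\overline{\Zch_V(E_{\restriction V})}\bigr)=\mathrm{const}\cdot\Im(\rho_0\bar\rho_1)\,([\omega].V)\,\bigl(\mu_V(Q)-\mu_V(E_{\restriction V})\bigr)$ with a positive constant, and conclude from $\Im(\rho_0\bar\rho_1)>0$ and $[\omega].V>0$. The only differences are cosmetic bookkeeping (you keep degrees and ranks unnormalised and treat torsion quotients directly, whereas the paper divides by ranks and reduces to sub-bundles at the outset), plus your explicit remark on $0$-dimensional subvarieties, which the paper leaves implicit.
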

\begin{proof}
Recall first that on a curve~$V\subset X$ it is sufficient to check Mumford stability over sub-bundles rather than subsheaves, so we will assume that~$S$ is a sub-bundle of~$E_{\restriction V}$. We will also stop explicitly indicating the restriction of~$E$ to~$V$.

The inequality~$\Im\left(\Zch_V(Q)\,\Zch_V(E)^{-1}\right)>0$ is equivalent to
    \begin{equation}\label{eq:imaginarypart_QV}
        \Im\left(\frac{\Zch_V(Q)}{\rk(Q)}\overline{\frac{\Zch_V(E)}{\rk(E)}}\right)>0.
    \end{equation}
By definition of the central charge, we have
\begin{equation}
    \frac{\Zch_V(E)}{\rk(E)}=\left(\rho_0\,U_1+\rho_0\frac{\Chern_1(E)}{\rk(E)}+\rho_1[\omega]\right).[V]=\rho_0\big(\mu_V(E)+U_1.[V]\big)+\rho_1\mu_V([\omega])
\end{equation}
where~$\mu_V$ denotes the slopes of sheaves on~$V$. Similarly, for~$Q$ we have
\begin{equation}
    \frac{\Zch_V(Q)}{\rk(Q)}=\rho_0(\mu_V(Q)+U_1.[V])+\rho_1\mu_V([\omega]),
\end{equation}
so we can compute the left hand-side of~\eqref{eq:imaginarypart_QV} as
\begin{equation}
\begin{split}
    &\Im\Big(
    \abs{\rho_0}^2(\mu_V(Q)+U_1.[V])(\mu_V(E)+U_1.[V])+\abs{\rho_1}^2\mu_V([\omega])^2\\
    &\quad\quad+\rho_1\overline{\rho}_0\mu_V([\omega])(\mu_V(E)+U_1.[V])+\rho_0\overline{\rho}_1\mu_V([\omega])(\mu_V(Q)+U_1.[V])\Big)\\
    &=\Im(\rho_0\overline{\rho}_1)\mu_V([\omega])\Big((\mu_V(Q)+U_1.[V])-(\mu_V(E)+U_1.[V])\Big)\\
    &=\Im(\rho_0\overline{\rho}_1)\mu_V([\omega])(\mu_V(Q)-\mu_V(E)).
\end{split}
\end{equation}
The central charge condition requires~$\Im(\rho_0\overline{\rho}_1)>0$, so~\eqref{eq:imaginarypart_QV} is equivalent to~$\mu_V(Q)>\mu_V(E)$, i.e.~$E$ must be Mumford stable when restricted to~$V$.
\end{proof}

\begin{exm}\label{ex:not_subvarstable2}
Consider an asymptotically~$\Zch$-stable bundle~$E$ over the projective surface~$X$ polarized by an ample line bundle~$L$. Assume that~$E$ is not Mumford stable with respect to~$L$. Then, from~\cite{DervanMcCarthySektnan}, it is known that~$E$ is Mumford semi-stable and there exists a coherent subsheaf~$F$ of~$E$ for which the slope satisfies the equality~$\mu(F)=\mu(E)$. Moreover, the restriction of~$E$ over a generic curve~$C$ taken in the linear system of~$L^m$ (for~$m$ large enough) is semistable by the Mehta-Ramanathan theorem for semistable bundles. But this restriction~$E_{\restriction C}$ cannot be Mumford stable as its degree can be computed using the restriction to such generic curve:~$\mu(F_{\restriction C})=\mu(E_{\restriction C})$. Consequently, Lemma~\ref{lemma:subvar_stability} shows that~$E$ can never be~$\Zch_k$-stable over subvarieties for large~$k$. 
\end{exm}

\subsection{Small variations of the charge and Mumford stability}\label{sec:alphazero}

We highlight a consequence of Theorem~\ref{thm:Zcrit_stability} that can be useful to provide new examples of~$\Zch$-critical metrics starting from known ones. First, the~$\Zch$-positivity of a~$\Zch$-critical metric implies an openness result, analogously to~\cite[Lemma~$3.4$]{Takahashi_Jeq_bundles}. 
\begin{prop}\label{prop:openness}
    If~$E\to X$ is a simple bundle, the set of polynomial central charges~$\Zch$ for which there exists a~$\Zch$-positive and~$\Zch$-critical metric~$h\in\Hermmetric(E)$ is open.
\end{prop}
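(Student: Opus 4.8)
The plan is to run an implicit function theorem argument, exploiting that $\Zch$-positivity makes the $\Zch$-critical equation elliptic near a solution and that simpleness of $E$ pins down the kernel of its linearisation. Fix a polynomial central charge $\Zch_0$ in the set under consideration and a metric $h_0\in\Hermmetric(E)$ that is $\Zch_0$-positive and $\Zch_0$-critical. First I would observe that, near $\Zch_0$, the space of polynomial central charges is a finite-dimensional manifold: it is an open subset of $(\CC^*)^{n+1}\times\bigoplus_{j=1}^{n}H^{j,j}(X,\RR)$, cut out by the open conditions $\Im(\rho_j/\rho_{j+1})>0$ and $\Zch_X(E)\neq0$, and one can pick harmonic representatives $u_\Zch$ of the unipotent class and the phase $\vartheta_E(\Zch)$ depending smoothly on $\Zch$. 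Parametrise metrics near $h_0$ as $h_f=h_0\e^{f}$, with $f$ a section of the $h_0$-Hermitian endomorphisms of $E$ subject to the normalisation $\int_X\Tr(f)\,\omega^n=0$, which removes the only residual symmetry of the equation, namely the scaling $h\mapsto\lambda h$ (for $E$ simple this is the sole ambiguity coming from the complexified gauge group). Completing in $L^p_k$ for $k$ large, one obtains a smooth map
\begin{equation}
    \mathcal F(\Zch,f)=\Im\!\left(\e^{-\I\vartheta_E(\Zch)}\Zdiff_\Zch(h_f)\right)
\end{equation}
from the product of the charge space with this Banach manifold of metrics into the $L^p_{k-2}$-sections of anti-self-adjoint endomorphisms of $E$ (identified with top-degree forms via $\omega^n$) of vanishing integrated trace --- the latter constraint being automatic, since $\int_X\Tr\Zdiff_\Zch(h)=\Zch_X(E)\in\e^{\I\vartheta_E(\Zch)}\RR_{>0}$. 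By hypothesis $\mathcal F(\Zch_0,0)=0$.

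The key step is to show that $\mathcal L_0:=D_f\mathcal F(\Zch_0,0)$ is an isomorphism. By the computation \eqref{eq:derivative_Zdiff} of the differential of $\Zdiff$ along complexified gauge directions, $\mathcal L_0$ is a second-order operator whose principal symbol is, up to a positive factor, the positive-definite expression \eqref{eq:subsolution} from the definition of $\Zch$-positivity; hence $\mathcal L_0$ is elliptic. It is also formally self-adjoint --- the infinitesimal shadow of the moment-map interpretation of the $\Zch$-critical equation, with the relevant pairing \eqref{eq:momentmap_pairing} being positive definite precisely because $h_0$ is $\Zch$-positive --- hence Fredholm of index zero, so it suffices to show $\ker\mathcal L_0=0$. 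If $\mathcal L_0V=0$, then pairing against $V$ and using self-adjointness together with the positivity of \eqref{eq:momentmap_pairing} forces $\delbar V=0$, so $V$ is a holomorphic endomorphism of $E$; since $E$ is simple $V\in\RR\cdot\id_E$, and the normalisation $\int_X\Tr(f)\,\omega^n=0$ then forces $V=0$. Much of this Fredholm package is already available from the analysis of the linearised $\Zch$-critical operator in~\cite{DervanMcCarthySektnan} and~\cite{McCarthy_thesis}, where the same operator controls the large volume limit.

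Granting this, the implicit function theorem for $C^1$ maps of Banach spaces yields a neighbourhood $\mathcal U\ni\Zch_0$ and a smooth map $\Zch\mapsto f(\Zch)$ with $f(\Zch_0)=0$ and $\mathcal F(\Zch,f(\Zch))=0$; elliptic bootstrapping makes each $f(\Zch)$ smooth, so $h_{f(\Zch)}$ is a genuine $\Zch$-critical metric. Finally, $\Zch$-positivity of a metric is an open condition --- pointwise positive-definiteness, on the compact manifold $X$, of a tensor depending continuously on $(\Zch,f)$ --- and $h_0=h_{f(\Zch_0)}$ is $\Zch_0$-positive, so after shrinking $\mathcal U$ the metric $h_{f(\Zch)}$ remains $\Zch$-positive throughout $\mathcal U$, giving openness. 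I expect the main obstacle to be the isomorphism property of $\mathcal L_0$: one must match the ellipticity furnished by $\Zch$-positivity with the correct choice of domain and codomain (the integrated-trace normalisation and the scaling quotient) and invoke simpleness to exclude a kernel --- though, once the functional-analytic setup is in place, this is essentially the linearised theory already developed in~\cite{DervanMcCarthySektnan}.
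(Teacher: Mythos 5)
Your proposal is correct and follows essentially the same route as the paper: linearise the $\Zch$-critical operator, use $\Zch$-positivity to get ellipticity and the positivity of the pairing~\eqref{eq:momentmap_pairing}, deduce self-adjointness, identify the kernel with holomorphic endomorphisms (hence multiples of $\id_E$ by simpleness), and conclude by the implicit function theorem. The only cosmetic difference is that you remove the scaling ambiguity by a trace normalisation on the domain, whereas the paper works modulo the one-dimensional kernel by noting the image is orthogonal to it; these are equivalent.
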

\begin{proof}
    We claim that the linearisation of the~$\Zch$-critical equation at a~$\Zch$-positive and~$\Zch$-critical metric~$h$ is a self-adjoint elliptic operator, and its kernel are the holomorphic endomorphisms of~$E$.

    To prove this, we can reason as in~\cite[Lemma~$3.4$]{Takahashi_Jeq_bundles} starting from the expression of the linearisation in~\eqref{eq:derivative_Zdiff}. First of all, it is equivalent to linearise the operator with respect to a path of Hermitian metrics on~$E$ or along the action of a one-parameter subgroup of the complex gauge group, say generated by~$V\in\I\Alt^0(X,\End(E,h))$. Then the linearised operator is
    \begin{equation}
        \m{P}:V\mapsto\frac{\I}{2\pi}\left[\Im\left(\e^{-\I\vartheta_E}\Zdiff_k(D)'\right)\wedge\,(D''D'-D'D'')V\right]_{\sym}
    \end{equation}
    and it is elliptic as~$h$ is~$\Zch$-positive by~\cite[Lemma~$2.36$]{DervanMcCarthySektnan}. The Bianchi identity then shows that~$\m{P}$ is self-adjoint with respect to the pairing on endomorphisms of~$E$ given by trace and integration, see~\cite[\S$2.3.4$]{DervanMcCarthySektnan}.

    If we assume that~$\m{P}(V)=0$, then taking the trace of~$\m{P}(V)$ against~$V$ and integrating by parts we obtain
    \begin{equation}
        \left\langle D''V,D''V\right\rangle=0
    \end{equation}
    for the pairing defined in~\eqref{eq:momentmap_pairing}. So~$V$ must be a holomorphic endomorphism of~$E$, proving the claim.
    
    In our case, since~$E$ is simple the kernel of the linearisation is~$\id_E\cdot\CC$. The image of the~$\Zch$-critical operator~$h\mapsto\Zdiff$ is orthogonal to this set, so the Implicit Function Theorem will give the desired~$h'$.
\end{proof}
From Theorem~\ref{thm:Zcrit_stability} and Lemma~\ref{lemma:stable_simple} then we obtain
\begin{cor}
    Let~$\Zch$ be a polynomial central charge. If~$E$ is a rank~$2$ irreducible bundle over a K\"ahler surface that satisfies condition~\eqref{eq:Z_alphapositive}, and~$h\in\Hermmetric(E)$ is~$\Zch$-positive and~$\Zch$-critical, then for any other central charge~$\Zch'$ sufficiently close to~$\Zch$ there is~$h'\in\Hermmetric(E)$ that is~$\Zch'$-positive and~$\Zch'$-critical.
\end{cor}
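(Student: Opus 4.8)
The plan is to first promote the existence of the $\Zch$-positive and $\Zch$-critical metric $h$ into \emph{simplicity} of $E$, and then read off the conclusion from the openness statement of Proposition~\ref{prop:openness}.

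\textbf{Step 1: $E$ is simple.} I would apply Theorem~\ref{thm:Zcrit_stability}. Since $E$ is rank~$2$ and irreducible, $h$ is $\Zch$-positive and solves the $\Zch$-critical equation, and $E$ satisfies~\eqref{eq:Z_alphapositive}, the theorem gives $\Im\bigl(\Zch_X(S)/\Zch_X(E)\bigr)<0$ for every coherent saturated subsheaf $S\subset E$ of rank~$1$. This is exactly the $\Zch$-stability of $E$ in the sense of Definition~\ref{def:Zstable}: any torsion-free rank~$1$ quotient $Q$ of $E$ sits in $0\to S\to E\to Q\to 0$ with $S=\ker(E\to Q)$ saturated (being the kernel of a map to a torsion-free sheaf), and additivity of the polynomial central charge on this sequence gives $\Im\bigl(\Zch_X(S)/\Zch_X(E)\bigr)+\Im\bigl(\Zch_X(Q)/\Zch_X(E)\bigr)=0$, so the subsheaf inequality is equivalent to the quotient inequality~\eqref{eq:Zstable}. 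Lemma~\ref{lemma:stable_simple} then upgrades $\Zch$-stability to $\End(E)=\CC\cdot\id_E$.

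\textbf{Step 2: conclude by openness.} With $E$ now known to be simple, Proposition~\ref{prop:openness} applies: the set of polynomial central charges admitting a $\Zch$-positive and $\Zch$-critical metric on $E$ is open, and by hypothesis it contains $\Zch$. Hence every polynomial central charge $\Zch'$ sufficiently close to $\Zch$ lies in this set, which produces the required $h'\in\Hermmetric(E)$.

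All the genuine work is contained in the cited results (Theorem~\ref{thm:Zcrit_stability} for Step~1 and, above all, Proposition~\ref{prop:openness} for Step~2, whose proof rests on the ellipticity and self-adjointness of the linearised $\Zch$-critical operator at a $\Zch$-positive solution together with the Implicit Function Theorem), so there is no serious obstacle left here. The only points needing a word of care are the translation between the saturated-subsheaf inequality of Theorem~\ref{thm:Zcrit_stability} and the quotient formulation used in Lemma~\ref{lemma:stable_simple}, and the observation that a central charge close to $\Zch$ is still a genuine polynomial central charge (the conditions $\Im(\rho_j/\rho_{j+1})>0$ being open); using Remark~\ref{rmk:strong_Zpositive} one also sees that condition~\eqref{eq:Z_alphapositive} ($\alpha>0$) is open, although this is not actually needed once simplicity of $E$ is in hand.
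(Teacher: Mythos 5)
Your argument is exactly the paper's: the corollary is stated immediately after Proposition~\ref{prop:openness} with the one-line justification ``From Theorem~\ref{thm:Zcrit_stability} and Lemma~\ref{lemma:stable_simple} then we obtain'', i.e.\ stability of $E$ from the $\Zch$-positive $\Zch$-critical metric, simplicity from stability, and then openness of the set of admissible charges. Your extra remarks on the subsheaf/quotient translation and on the openness of the defining conditions for a polynomial central charge are correct fillers of details the paper leaves implicit.
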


We can also use Proposition~\ref{prop:openness} to construct examples of~$\Zch$-critical metrics starting from Mumford-stable bundles of arbitrary rank on a K\"ahler surface. First, note that in the case~$\alpha=0$ the~$\Zch$-critical equation~\eqref{eq:Zcritical_surface} simplifies substantially: it essentially reduces to the Hermite-Einstein equation. It is possible then to establish a version of our conjectures for bundles of arbitrary rank.
\begin{prop}\label{prop:alphazero}
    Fix a polynomial central charge~$\Zch$, and assume that~$E\to X$ is a bundle on a K\"ahler surface such that~$\alpha=0$, with the notation of~\eqref{eq:abc_coeff}. Then there is a~$\Zch$-positive and~$\Zch$-critical Hermitian metric~$h\in\Hermmetric(E)$ if and only if~$E$ is~$\Zch$-positive and~$\Zch$-stable.
\end{prop}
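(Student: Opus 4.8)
The plan is to show that, when $\alpha=0$, the $\Zch$-critical equation degenerates (after a conformal change of the metric) into the ordinary Hermite--Einstein equation, while $\Zch$-positivity of a metric becomes a purely cohomological condition on the charge; the statement then reduces to the classical Kobayashi--Hitchin correspondence together with Proposition~\ref{prop:stability_comparison}. Since $\Zch$-stability forces simplicity by Lemma~\ref{lemma:stable_simple}, I may and will assume $E$ indecomposable, exactly as in Conjecture~\ref{conj:Zstable}.

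First I would unwind the data in~\eqref{eq:abc_coeff} under $\Im(\e^{-\I\vartheta_E}\rho_0)=2\alpha=0$: then $\beta=\Im(\e^{-\I\vartheta_E}\rho_1)\,\omega=:c_1\omega$ is a real multiple of the K\"ahler form, and $\gamma=c_1\,\omega\wedge u_1+\Im(\e^{-\I\vartheta_E}\rho_2)\,\omega^2$ is a fixed top form, which I write as $\gamma=g\,\tfrac{\omega^2}{2}$ for a smooth function $g$. Using the Lefschetz identity $\omega\wedge\curvform(h)=(\Lambda_\omega\curvform(h))\tfrac{\omega^2}{2}$ on a surface, the $\Zch$-critical equation~\eqref{eq:Zcritical_surface}, which now reads $\beta\wedge\curvform(h)+\gamma\otimes\id_E=0$, becomes the prescribed-trace equation $\Lambda_\omega\curvform(h)=-(g/c_1)\,\id_E$. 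On the positivity side, $\Im(\e^{-\I\vartheta_E}\Zdiff'(h))=2\alpha\curvform(h)+\beta\otimes\id_E=c_1\,\omega\otimes\id_E$ no longer depends on $h$, and the form in~\eqref{eq:subsolution} is positive definite exactly when $c_1>0$; by the computation in the proof of Lemma~\ref{lemma:subsol_positiveclass}, the same inequality $c_1\,\rk(E)[\omega]=2\alpha\,\Chern_1(E)+\rk(E)[\beta]>0$ is precisely the condition that $E$ be $\Zch$-positive. Hence a $\Zch$-positive metric exists on $E$ iff $c_1>0$, and when $c_1>0$ \emph{every} metric on $E$ is $\Zch$-positive; from here on one assumes $c_1>0$.

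Next I would reduce $\Lambda_\omega\curvform(h)=-(g/c_1)\,\id_E$ to a genuine Hermite--Einstein equation by a conformal change. Since $\curvform(\e^{\psi}h)=\curvform(h)+\tfrac{\I}{2\pi}\bar{\partial}\partial\psi\otimes\id_E$, one has $\Lambda_\omega\curvform(\e^{\psi}h)=\Lambda_\omega\curvform(h)+\tfrac{\I}{2\pi}(\Lambda_\omega\bar{\partial}\partial\psi)\,\id_E$, and $\psi\mapsto\Lambda_\omega\tfrac{\I}{2\pi}\bar{\partial}\partial\psi$ is, up to a non-zero constant, the Laplacian of $(X,\omega)$. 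The normalisation $\Zch_X(E)\in\e^{\I\vartheta_E}\RR_{>0}$ gives $\Im(\e^{-\I\vartheta_E}\Zch_X(E))=0$, which for $\alpha=0$ and $[\beta]=c_1[\omega]$ reads $\rk(E)\int_X\gamma+c_1\,\Chern_1(E).[\omega]=0$; equivalently the average over $(X,\omega)$ of $-g/c_1$ equals the Hermite--Einstein constant $\lambda$ fixed by $\lambda\,\rk(E)\int_X\tfrac{\omega^2}{2}=\Chern_1(E).[\omega]$. Therefore the Laplace equation $\tfrac{\I}{2\pi}\Lambda_\omega\bar{\partial}\partial\psi=\lambda+g/c_1$ is solvable, and $\e^{\psi}h$ is $\Zch$-critical iff $h$ solves $\Lambda_\omega\curvform(h)=\lambda\,\id_E$. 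By the Kobayashi--Hitchin correspondence on a compact K\"ahler surface (Donaldson, Uhlenbeck--Yau), this Hermite--Einstein equation admits a solution iff $E$ is $[\omega]$-slope-polystable, hence --- $E$ being indecomposable --- iff $E$ is $[\omega]$-slope-stable.

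Finally I would match $[\omega]$-stability with $\Zch$-stability using Proposition~\ref{prop:stability_comparison} with $\alpha=0$: the $\Chern_2$ terms drop out and for every subsheaf $S\subset E$ one gets $\Im(\Zch_X(S)\,\overline{\Zch_X(E)})=c_1\,\rk(S)\,\abs{\Zch_X(E)}\big(\mu_{[\omega]}(S)-\mu_{[\omega]}(E)\big)$ with $\mu_{[\omega]}$ the usual slope; since $c_1>0$, $\Im(\Zch_X(S)/\Zch_X(E))<0$ iff $\mu_{[\omega]}(S)<\mu_{[\omega]}(E)$, and dually for quotients, so $\Zch$-stability of $E$ coincides with $[\omega]$-slope-stability. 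Chaining the equivalences: $E$ admits a $\Zch$-positive and $\Zch$-critical metric $\Leftrightarrow$ $c_1>0$ and $E$ is $[\omega]$-polystable $\Leftrightarrow$ $c_1>0$ and $E$ is $[\omega]$-stable $\Leftrightarrow$ $E$ is $\Zch$-positive and $\Zch$-stable. The routine parts are the bookkeeping with~\eqref{eq:abc_coeff} and Proposition~\ref{prop:stability_comparison}; the only substantial input is the Kobayashi--Hitchin correspondence, and the one point that genuinely needs the (implicit) indecomposability hypothesis is the gap between polystability and stability.
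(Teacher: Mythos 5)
Your proposal is correct and follows essentially the same route as the paper: reduce the $\alpha=0$ equation to a weak Hermite--Einstein equation, observe that $\Zch$-positivity of a metric becomes the $h$-independent condition $\Im(\e^{-\I\vartheta_E}\rho_1)>0$ equivalent to $\Zch$-positivity of the bundle, invoke Kobayashi--Hitchin, and match Mumford stability with $\Zch$-stability via Proposition~\ref{prop:stability_comparison}. Your write-up is in fact slightly more careful than the paper's on two routine points it leaves implicit --- the conformal gauge fixing that turns the weak Hermite--Einstein equation into the genuine one (with the constant pinned down by $\Im(\e^{-\I\vartheta_E}\Zch_X(E))=0$), and the polystable-versus-stable gap closed by indecomposability/simplicity.
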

\begin{proof}
    When~$\alpha=0$, from~\eqref{eq:Zcritical_surface} we see that the~$\Zch$-critical equation becomes the weak Hermite-Einstein equation
    \begin{equation}\label{eq:alphazero}
        \curvform(h)\wedge\beta+\gamma\otimes\id_E=0.
    \end{equation}
    Moreover the expressions for~$\beta$ and~$\gamma$ simplify:
    \begin{equation}
        \begin{dcases}
            \beta=\Im\left(\e^{-\I\vartheta_E}\rho_1\right)\omega\\
            \gamma=\Im\left(\e^{-\I\vartheta_E}\left(\rho_1\omega\wedge u_1+\rho_2\omega^2\right)\right).
        \end{dcases}
    \end{equation}
    The key observation is that in the present situation the~$\Zch$-positivity condition for any metric~$h\in\Hermmetric(E)$ is actually independent on~$h$, and becomes just a condition on the bundle. More precisely, from~\eqref{eq:positivity_comparison} we see that~$\Zch$-positivity of a metric is equivalent to~$\beta>0$, while the proof of Lemma~\ref{lemma:subsol_positiveclass} the bundle~$E$ is~$\Zch$-stable if and only if~$[\beta]>0$. Both conditions are equivalent to~$\Im\left(\e^{-\I\vartheta_E}\rho_1\right)>0$, as~$\omega$ is K\"ahler.

    Assuming now that~$\beta$ is a K\"ahler form,~\eqref{eq:alphazero} has a solution if and only if~$E$ is Mumford stable with respect to~$\beta$; to prove our claim, it will be sufficient to show that if~$\alpha=0$~$\Zch$-stability and Mumford stability coincide. As~$\alpha=0$,~\eqref{eq:stability_comparison} gives, for any subsheaf~$S\subset E$,
    \begin{equation}
        \Im\left(\Zch_X(S)\,\overline{\Zch_X(E)}\right)=\rk(S)\,\Im\left(\overline{\Zch_X(E)}\rho_1\right)\big(-\mu_L(E)+\mu_L(S)\big).
    \end{equation}
    As~$\beta>0$ is equivalent to~$\Im\left(\overline{\Zch_X(E)}\rho_1\right)>0$, we see that~$\Im\left(\Zch_X(S)\,\overline{\Zch_X(E)}\right)<0$ if and only if~$\mu_L(S)<\mu_L(E)$.
\end{proof}

The condition~$\alpha=0$ is realised precisely when
\begin{equation}
    \Im\left(\bar{\rho}_1\rho_0U_1.[\omega]+\bar{\rho}_2\rho_0[\omega]^2+\bar{\rho}_1\rho_0\frac{\Chern_1(E).[\omega]}{\rk(E)}\right)=0
\end{equation}
which we rewrite as
\begin{equation}\label{eq:alpha=zero_condition}
    U_1.[\omega]+\frac{\Im(\bar{\rho}_2\rho_0)}{\Im(\bar{\rho}_1\rho_0)}[\omega]^2=-\frac{\Chern_1(E).[\omega]}{\rk(E)}.
\end{equation}
Under this assumption, the positivity condition~$\beta>0$, i.e.~$\Im\left(\overline{\Zch_X(E)}\rho_1\right)>0$, becomes
\begin{equation}\label{eq:beta_positive_alphazero}
    \Im(\rho_1\bar{\rho}_2)\rk(E)[\omega]^2>\Im(\rho_0\bar{\rho}_1)\big(\rk(E)U_2+U_1.\Chern_1(E)+\Chern_2(E)\big)
\end{equation}
It is relatively easy to find examples of situations where~\eqref{eq:alpha=zero_condition} and~\eqref{eq:beta_positive_alphazero} are satisfied; for example, consider a Mumford stable rank~$2$ bundle on a surface of degree zero as in Example~\ref{exm:CP2blowup}. Then~\eqref{eq:alpha=zero_condition} can be satisfied by~$\Im(\bar{\rho}_1\rho_0)U_1+\Im(\bar{\rho}_2\rho_0)[\omega]=0$, and~\eqref{eq:beta_positive_alphazero} becomes
\begin{equation}
    \Im(\rho_1\bar{\rho}_2)[\omega]^2>\Im(\rho_0\bar{\rho}_1)\left(U_2-\frac{1}{4}\chern_2(E)\right)
\end{equation}
which is satisfied for example if~$4\,U_2-\chern_2(E)<0$.

Returning to the search for examples, we can deduce that Mumford stable bundles are~$\Zch$-stable with respect to many polynomial central charges, by considering small variations around particular charges. More explicitly, for a small parameter~$t$, assume that~$\Zch^t$ is a family of polynomial central charges depending on stability vectors~$\rho^t$ and unitary charges~$U^t$, and assume that~$E\to X$ is a Mumford stable bundle (with respect to~$[\omega]$) such that
    \begin{equation}
        \begin{split}
            &\Im\left(\overline{\Zch^t(E)}\rho^t_0\right)_{\vert t=0}=0\\
            &\Im\left(\overline{\Zch^t(E)}\rho^t_1\right)_{\vert t=0}>0.
        \end{split}
    \end{equation}
Then by Proposition~\ref{prop:alphazero} we can use Proposition~\ref{prop:openness} to conclude that for every small~$t$ there is a~$\Zch_t$-critical metric on~$E$.

Note that a similar phenomenon was already observed in the large volume regime (c.f. Remark~\ref{rmk:largevolumelimit}) on higher dimensional manifolds, see~\cite[Theorem~$4.3$]{DervanMcCarthySektnan},~\cite[Theorem~$4.1$]{Pingali_vbMA}, and~\cite[Theorem~$6.5$]{Takahashi_Jeq_bundles}, while Proposition~\ref{prop:openness} holds in non-asymptotic regimes. The relevant linearised problem to establish the existence of solutions in the asymptotic regime is however essentially the same as in our case.

\section{Some examples, Gieseker stability, and Z-polystability}\label{sec:examples}

We start this Section by discussing an example showing that~$\Zch$-positivity might fail even in very simple situations. Drawing from the rank-1 theories of the J-equation and the dHYM equation, we expect that the most difficult part of establishing Conjecture~\ref{conj:Zstable} on surfaces or higher-dimensional subvarieties will be addressing when a bundle admits a~$\Zch$-positive metric. On the other hand, we expect that it should be relatively easy to check the~$\Zch$-positivity of a vector bundle, at least over a surface. For example, in the rank~$1$ case it has been shown in~\cite{Dyrefelt_Khalid_destab_curves} that one just needs to check the inequality on a finite number of curves to ensure~$\Zch$-positivity. 

\begin{exm}\label{ex:P2_charges}
    We consider polynomial central charges on~$X=\PP^2$ defined by a unitary class of the form
    \begin{equation}
        U=\e^{\lambda[\omega_{FS}]}=1+\lambda[\omega_{FS}]+\frac{\lambda^2}{2}[\omega_{FS}]^2
    \end{equation}
    for some real number~$\lambda$. We claim that the Fubini-Study Hermitian metric~$h_{FS}$ on the bundle~$E=T\PP^2\to X$ is~$\Zch$-critical for any choice of stability vector. To see this, note that the curvature of the Fubini-Study metric~$h_{FS}$ on~$E$ solves
    \begin{equation}\label{eq:FS_curvature}
    \begin{split}
        &\curvform(h_{FS})\wedge\omega_{FS}=\frac{3}{2}\omega_{FS}^2\otimes\id_E\\
        &\curvform(h_{FS})^2=\frac{3}{2}\omega_{FS}^2\otimes\id_E.
    \end{split}
    \end{equation}
    In particular,~$h_{FS}$ is a solution of the vector bundle Monge-Ampère equation (as noted in~\cite{Pingali_vbMA}), and it is also easily checked to be Monge-Ampère positive. Then the~$\Zch$-critical operator will satisfy~$\Zdiff(h_{FS})\in \CC\cdot\omega^2$ and the definition of the phase~$\e^{\I\vartheta_E}$ guarantees that for any choice of the coefficients~$\rho_i$ and~$\lambda$ we will get~$\Im(\e^{-\I\vartheta}\Zdiff(h_{FS}))=0$.

    However, for many values of these coefficients~$E$ is not~$\Zch$-positive. As an example, consider~$\rho=(1,-\I/3,-1+\I)$. We compute
    \begin{equation}
    \begin{split}
        Z_X(E)=&2\left(\rho_0\frac{\lambda^2}{2}+\rho_1\lambda+\rho_2\right)+3\left(\rho_0\lambda+\rho_1\right)+\frac{3}{2}\rho_0=\lambda^2+\lambda\left(3-\frac{2}{3}\I\right)-\frac{1}{2}+\I
    \end{split}
    \end{equation}
    and if~$H\subset X$ is a hyperplane we have instead
    \begin{equation}
        \Zch_H(E_{\restriction H})=3+2\,\lambda-\frac{2}{3}\I.
    \end{equation}
    If~$E$ were~$\Zch$-positive, by Lemma~\ref{lemma:subsol_positiveclass} we should find~$\Im\left(\overline{Z_X(E)}\Zch_H(E_{\restriction H})\right)>0$. However, we get~$\Im\left(\overline{Z_X(E)}\Zch_H(E_{\restriction H})\right)=\frac{2}{3}\left(\lambda^2-3\lambda-4\right)$,
    which is negative for~$-1<\lambda<4$.
\end{exm}

We can use this observation on~$T\PP^2$ to also show an example of a polynomial central charge~$\Zch$ and a bundle that has a~$\Zch$-positive and~$\Zch$-critical metric, but is not~$\Zch$-stable over subvarieties (c.f. Definition~\ref{def:Zstable_subvarieties}).

\begin{exm}\label{ex:not_subvarstable}
    Consider again the bundle~$E=T\PP^2$ over~$\PP^2$ and a polynomial central charge~$\Zch$ as in Example~\ref{ex:P2_charges}, i.e.~$U=\e^{\lambda\omega_{FS}}$, so that the Fubini-Study Hermitian metric on~$E$ is~$\Zch$-critical. Lemma~\ref{lemma:subvar_stability} shows that~$E$ is not~$\Zch$-stable over subvarieties: its restriction to a hyperplane~$H$ is not even semistable, as it splits as~$TH\oplus\sheaf_H(1)$, see for example~\cite[pag. 14]{OkonekSchneiderSpindler}.

    It remains to show that~$h_{FS}$ is~$\Zch$-positive for some central charge over~$\PP^2$. We choose the dHYM charge, defined by the weights~$\rho=(-\I,-1,\I/2)$, and set~$\lambda=0$ (so, the unitary class is trivial). Then, the charge is
    \begin{equation}
        Z_X(E)=-3-\frac{1}{2}\I
    \end{equation}
    and the coefficients of~\eqref{eq:abc_coeff} can be computed from the identities
    \begin{equation}
        \abs{\Zch_X(E)}\,\alpha=\frac{3}{2},\quad
        \abs{\Zch_X(E)}\,\beta=-\frac{1}{2}\omega,\quad
        \abs{\Zch_X(E)}\,\gamma=-\frac{3}{2}\omega^2.
    \end{equation}
    Note that the volume form hypothesis~\eqref{eq:volumeform_hyp} is satisfied:
    \begin{equation}
        \left(\frac{\beta}{2\alpha}\right)^2-\frac{\gamma}{\alpha}=\frac{1}{36}\omega^2+\omega^2.
    \end{equation}
    As for~$\Zch$-positivity, we should check that
    \begin{equation}
        \I\Tr\Big[\xi^*\wedge\xi\wedge\left(2\alpha\,\curvform(h_{FS})+\beta\otimes\id_E\right)+\xi^*\wedge\left(2\alpha\,\curvform(h_{FS})+\beta\otimes\id_E\right)\wedge\xi\Big]>0
    \end{equation}
    for any~$p\in X$ and any nonzero~$\xi\in T^{0,1}_p{}^*X\times\End(E_p)$. In our particular case, this is equivalent to
    \begin{equation}\label{eq:Zpos_dHYM}
        3\I\Tr\Big[\xi^*\wedge\xi\wedge\curvform(h_{FS})+\xi^*\wedge\curvform(h_{FS})\wedge\xi\Big]>\omega_{FS}\wedge\I\Tr\left[\xi^*\wedge\xi\right].
    \end{equation}
    It is sufficient to check this at a single point, as the action of the unitary automorphisms is transitive. So we can perform the computation at the point~$0\in U_0\subset\PP^2$. At this point~$2\pi\omega_{FS}$ is the canonical symplectic form, and the Fubini-Study curvature is (up to a multiple of~$2\pi$)
    \begin{equation}
        \curvform(h_{FS})=\begin{pmatrix}2&0\\0&1\end{pmatrix}\I\dd z^1\wedge\dd\bar{z}^1
        +\begin{pmatrix}0&1\\0&0\end{pmatrix}\I\dd z^1\wedge\dd\bar{z}^2
        +\begin{pmatrix}0&0\\1&0\end{pmatrix}\I\dd z^2\wedge\dd\bar{z}^1
        +\begin{pmatrix}1&0\\0&2\end{pmatrix}\I\dd z^2\wedge\dd\bar{z}^2.
    \end{equation}
    If we let~$\xi=U\dd\bar{z}^1+V\dd\bar{z}^2$ for two matrices~$U$ and~$V$, we find
    \begin{equation}
    \begin{split}
        &\I\Tr\Big[\xi^*\wedge\xi\wedge\curvform(h_{FS})+\xi^*\wedge\curvform(h_{FS})\wedge\xi\Big]=\\
        =&\Big[3\left(\abs{u^1_1}^2+\abs{u^1_2}^2+\abs{v^1_1}^2+\abs{v^2_1}^2\right)+\abs{u^2_1}^2+\abs{u^2_2}^2+\abs{v^1_2}^2+\abs{v^2_2}^2+\\
        &+\abs{u^1_1-v^1_2}^2+\abs{u^2_1-v^2_2}^2+\abs{v^1_1-u^2_1}^2+\abs{v^1_2-u^2_2}^2\Big]\omega_{FS}^2
    \end{split}
    \end{equation}
    while for~$\Tr\left[\xi^*\wedge\xi\right]$ we have
    \begin{equation}
        \omega_{FS}\wedge\I\Tr\left[\xi^*\wedge\xi\right]=\left(\abs{u^1_1}^2+\abs{u^1_2}^2+\abs{u^2_1}^2+\abs{u^2_2}^2+\abs{v^1_1}^2+\abs{v^1_2}^2+\abs{v^2_1}^2+\abs{v^2_2}^2\right)\omega_{FS}^2
    \end{equation}
    and clearly~\eqref{eq:Zpos_dHYM} is satisfied.
\end{exm}

\begin{exm}\label{exm:dHYM_unstable_rank3}
    We consider now a slight modification of~\cite[Example~$2.20$]{DervanMcCarthySektnan}, to see what our conjectures predict for a rank~$3$ bundle over~$X=\PP^2$. Take a Mumford stable bundle~$S\to X$ of rank~$2$ such that there exists a nonzero~$\tau\in H^1(X,S)$. This class~$\tau$ defines a non-split extension of~$\sheaf_{X}$ by~$S$ that we denote by
    \begin{equation}
        0\to S\to E\to \sheaf_{X}\to 0.
    \end{equation}
    It is easy to check that, in this situation,~$E$ and~$S$ satisfy~$\Chern(S)+1=\Chern(E)$ and the Bogomolov inequality implies~$4\Chern_2(S)\leq\Chern_1(S)^2$.
    
    As in~\cite{DervanMcCarthySektnan}, we choose the Fubini-Study form on~$\PP^2$,~$\omega\in H:=\chern_1\left(\sheaf(1)\right)$ and we consider the deformed Hermitian Yang-Mills charge for some B-field class~$B\in H^{1,1}(X,\RR)$, obtained by the choice of weights~$\rho=(-\I,-1,\I/2)$ and unitary class~$U=1-B+B^2/2$,
    \begin{equation}
        \Zch_X^{\mrm{dHYM}}(E)=-\I\int_X\e^{-\I H}\e^{-B}\Chern(E).
    \end{equation}
    More explicitly, the dHYM charge of a bundle $V$
    \begin{equation}
            \Zch_X^{\mrm{dHYM}}(V)
            =\rk(V)\left(H.B-\mu_H(V)+\frac{\I}{2}(1-B^2)+\I\,\mu_B(V)-\I\mu_{MA}(V)\right)
    \end{equation}
    where $\mu_H$ and $\mu_B$ denote the slopes with respect to $H$ and $B$ respectively, while $\mu_{MA}$ is the Monge-Ampère slope.
    
    We proceed to examine the~$\Zch$-stability and~$\Zch$-positivity of~$E$. For the stability part it is sufficient to check the ratio of~$Z_X(S)$ and~$Z_X(E)$. As $S$ and $E$ have the same Chern classes, so that the slopes satisfy $\mu_\bullet(S)=\frac{3}{2}\mu_\bullet(E)$, this ratio is a positive multiple of
    \begin{equation}\label{eq:dHYM_rank3_stability}
    \begin{split}
        \Im\left(\Zch_X(S)\overline{\Zch_X(E)}\right)
        =&\frac{1}{2}(1-B^2)\mu_H(E)+H.B\left(\mu_B(E)-\mu_{MA}(E)\right).
    \end{split}
    \end{equation}
    The case considered in~\cite[Example~$2.20$]{DervanMcCarthySektnan} is~$\deg(S)=0$; in this case, $E$ is~$\Zch$-stable when~$\Chern_2(E)\not=0$ and~$H.B<0$ (note that $\Chern_2(E)\leq 0$ by the Bogomolov inequality). Under these assumptions, Conjecture~\ref{conj:Zstable} predicts that if there exists a~$\Zch$-positive Hermitian metric on~$E$ there should be a~$\Zch$-positive solution of the~$\Zch$-critical equation. If~$H.B\geq 0$ instead we can not apply Theorem~\ref{thm:Zcrit_stability} directly to deduce that there is no dHYM-positive solution of the dHYM equation, as that result applies only to rank~$2$ bundles.

    Theorem~\ref{thm:Zcrit_positivity} suggests that the existence of a dHYM-positive metric should be governed by~$\Zch$-positivity of~$E$ over quotients, this is the content of Conjecture~\ref{conj:Zpositive}. We only consider the~$\Zch$-positivity of~$E$, and the~$\Zch$-positivity for its quotient~$\sheaf_X$; it will be sufficient to consider as subvariety of~$X$ the hyperplane~$H$. The direct computation, using~\eqref{eq:charge_subvariety}, gives
    \begin{equation}
    \begin{split}
        \Zch_H(E_{\restriction H})=&-3\left(\I\,\mu_H(E)-\I\,B.H+1\right)\\
        \Zch_H(\sheaf_X)=&-\left(-\I\,B.H+1\right)=\frac{1}{3}\Zch_H(E_{\restriction H})+\I\,\mu_H(E).
    \end{split}
    \end{equation}
    We are interested in the positivity of the two quantities
    \begin{equation}
        \begin{split}
            \Im\left(\Zch_H(E_{\restriction H})\overline{\Zch_X(E)}\right)=&
            9\left(\left(B.H-\mu_H(E)\right)^2+\frac{1}{2}(1-B^2)+\mu_B(E)-\mu_{MA}(E)\right)\\
            \Im\left(\Zch_H(\sheaf_X)\overline{\Zch_X(E)}\right)=&3\left(B.H(B.H-\mu_H(E))+\frac{1}{2}(1-B^2)+\mu_B(E)-\mu_{MA}(E)\right).
        \end{split}
    \end{equation}
    In particular when~$S$ has degree~$0$ the~$\Zch$-positivity of~$E$ over the quotient~$\sheaf_X$ does not impose any additional condition. Recall however that positivity over quotients is only relevant under condition~\eqref{eq:Z_alphapositive}, i.e.~$\alpha>0$. For our choice of charge, and assuming~$\deg(S)=0$, this is equivalent to
    \begin{equation}
        0<\Im\left(\overline{\Zch_X(E)}\rho_0\right)=-\Re\left(\Zch_X(E)\right)=-3\left(B.H-\mu_H(E)\right)=-3\,B.H
    \end{equation}
    so in this case~\eqref{eq:Z_alphapositive} is in fact equivalent to~$\Zch$-stability. To sum up, let~$B=xH$ for some real number~$x$ and assume that~$\deg(S)=0$. Then we see that~$E$ is~$\Zch$-positive (and positive over the quotient~$\sheaf_X$) if and only if~$1+x^2>\frac{2}{3}\Chern_2(E)$, which is always satisfied. So, Conjecture~\ref{conj:Zpositive} suggests the existence of a~$\Zch$-positive metric on~$E$, if~$x<0$, even though we stated it only for rank~$2$ bundles. Note however that in principle one should also consider all possible quotients of~$E$ restricted to any projective curve.
\end{exm}

\begin{exm}
    We show that there is a rank $3$ bundle over $\PP^2$ that is $\Zch^{dHYM}$-positive, satisfies \eqref{eq:Z_alphapositive}, and is not $\Zch^{dHYM}$-positive over a rank $1$ quotient. This bundle can not admit any $\Zch^{dHYM}$-positive metric by Lemma~\ref{lemma:Zpositivity_rank1quotient}.

    We consider the same setting of Example \ref{exm:dHYM_unstable_rank3}, but assuming that the $B$-field vanishes, for simplicity. The condition \eqref{eq:Z_alphapositive} then becomes $\mu_H(S)>0$, while the $\Zch$-positivity of $E$ and the $\Zch$-positivity for the quotient $\sheaf_X$ are, respectively,
    \begin{equation}
        \begin{split}
            \Im\left(\Zch_H(E_{\restriction H})\overline{\Zch_X(E)}\right)=&4\,\mu_H(S)^2+\frac{9}{2}-6\,\mu_{MA}(S)>0,\\
            \Im\left(\Zch_H(\sheaf_X)\overline{\Zch_X(E)}\right)=&\frac{3}{2}-2\,\mu_{MA}(S)>0.
        \end{split}
    \end{equation}
    We express these conditions in terms of the Chern classes $\chern_1$ and $\chern_2$ of $S$, keeping in mind the Bogomolov inequality $\chern_1^2<4\chern_2$. The condition \eqref{eq:Z_alphapositive} is equivalent to $\chern_1>0$, while for the other two we find
    \begin{equation}
        \begin{dcases}
            \chern_1^2+3\left(\frac{3}{2}-\frac{1}{2}\chern_1^2+\chern_2\right)>0 & \Zch\mbox{-positivity of }E\\
            \frac{3}{2}-\frac{1}{2}\chern_1^2+\chern_2>0 & \Zch\mbox{-positivity over }\sheaf_X.
        \end{dcases}
    \end{equation}
    It is clear that $\Zch$-positivity over the quotient $\sheaf_X$ implies $\Zch$-positivity of $E$, but for some values of $\chern_1$ and $\chern_2$ the bundle will be $\Zch$-positive but not $\Zch$-positive over the quotient. This happens exactly when $\chern_1>0$ and
    \begin{equation}\label{eq:positive_nonpositive}
        3+2\chern_2\leq\chern_1^2<4\chern_2.
    \end{equation}
    For many choices of positive integers $c_1$ and $c_2$ satisfying \eqref{eq:positive_nonpositive} there is a stable bundle $S$ on $\PP^2$ with those Chern numbers, see \cite{DrezetLePotier_stable_P2}. \todo[inline]{An example is $S=T\PP^2$: in this case the Chern numbers are $\chern_1=\chern_2=3$, but $H^1(T\PP^2)=0$. So $E$ will be a decomposable bundle. Which is still ok, it does give an example, but it would be better to have a non-decomposable one.}
\end{exm}

\begin{exm}\label{ex:dHYM_rank3_extension}
    As a particular case of the construction in Example~\ref{exm:dHYM_unstable_rank3}, we fix a $B$-field class $B=xH$ represented by $x\,\omega$ for a real number $x$, and we take~$S=T\PP^2\otimes K_{\PP^2}$. Of course~$S$ is stable, as~$T\PP^2$ is stable. Moreover,~$H^1(X,S)\cong H^{1,1}(X)^\vee$ by Serre duality, so we can define a nontrivial extension~$E$ by using the Fubini-Study form.

    The components of the Chern character are
    \begin{equation}\label{eq:dHYM_rank3_example_classes}
        \begin{split}
            \chern_1(E)=&\chern_1(T\PP^2)+2\chern_1(K_{\PP^2})=-3H\\
            \Chern_2(E)=&\chern_1(K_{\PP^2})^2+\chern_1(T\PP^2).\chern_1(K_{\PP^2})+\Chern_2(T\PP^2)=\frac{3}{2}
        \end{split}
    \end{equation}
    and considering again the dHYM charge~$\Zch=\Zch^{dHYM}$ of Example~\ref{exm:dHYM_unstable_rank3}, we find that the~$\Zch$-positivity,~$\Zch$-positivity over the quotient~$\sheaf_X$, and~$\Zch$-stability of~$E$ become respectively
    \begin{equation}
        \begin{cases}
            \left(x+1\right)^2+2>1;\\
            x(x+1)+\frac{1}{2}(1-x^2)-x-\frac{1}{2}>0;\\
            -\left(x^2+1\right)-x<0.
        \end{cases}
    \end{equation}
    Hence~$E$ is~$\Zch$-positive,~$\Zch$-positive over~$\sheaf_X$, and~$\Zch$-stable for every choice of~$x$, and there should be a~$\Zch$-positive solution of the dHYM equation. We will compute it explicitly.

    The dHYM charge of~$E$ as a function of~$x$ is
    \begin{equation}
        \Zch_X(E)=3\,x+3-\frac{3}{2}\I\left(x+2\right)x.
    \end{equation}
    To write the dHYM equation we choose~$x\,\omega$ as a representative of the class~$B=x\,H$, and the coefficients of the equation are determined by
    \begin{equation}
    \begin{dcases}
        \abs{\Zch_X(E)}\,\alpha=\frac{1}{2}\Im\left(\overline{Z_X(E)}(-\I)\right)=-\frac{3}{2}(1+x)\\
        \abs{\Zch_X(E)}\,\beta=\Im\left(\overline{Z_X(E)}\left(\I\,x-1\right)\right)\omega=\frac{3}{2}x^2\omega\\
        \abs{\Zch_X(E)}\,\gamma=\frac{1}{2}\Im\left(\overline{Z_X(E)}(\I (1-x^2)+2x)\right)\omega^2=\frac{3}{2}\left(1+x+x^2\right)\omega^2.
    \end{dcases}
    \end{equation}
    Hence, the dHYM equation is equivalent to
    \begin{equation}\label{eq:dHYM_rank3}
        -(1+x)\curvform(h)^2+x^2\omega\wedge\curvform(h)+\left(1+x+x^2\right)\omega^2\otimes\id_E=0
    \end{equation}
    and the volume form hypothesis is always satisfied.

    Take~$h=h_S+h_{\sheaf_{\PP^2}}$ where~$h_{\sheaf_{\PP^2}}$ is the flat metric on~$\sheaf_{\PP^2}$, and~$h_S$ is the product of the Fubini-Study Hermitian metrics on~$T{\PP^2}$ and~$K_{\PP^2}$. Then, letting~$A$ be the second fundamental form of~$S\subset E$,
    \begin{equation}
    \curvform(h)=\begin{pmatrix}
        \curvform_S-\frac{\I}{2\pi}A\wedge A^* & \frac{\I}{2\pi}D'A \\ -\frac{\I}{2\pi}D''A^* & -\frac{\I}{2\pi}A^*\wedge A
    \end{pmatrix}
    \end{equation}
    and~$\curvform_S$ satisfies
    \begin{equation}\label{eq:curvature_S}
        \curvform_{S}=\curvform_{T\PP^2}\otimes\id_{K_X}+\id_{T\PP^2}\otimes\curvform_{K_X}=\curvform_{T\PP^2}\otimes\id_{K_X}-3\,\omega\otimes\id_S.
    \end{equation}
    The dHYM equation~\eqref{eq:dHYM_rank3} can be seen as a system of equations for
    \begin{equation}
        A\in\Alt^{0,1}(X,\Hom(\sheaf_X,S))=\Alt^{0,1}(X,S).
    \end{equation}
    Explicitly, in the usual local coordinate system over~$U_0\subset\PP^2$, we take a local trivialisation~$S$ by the local frame~$\partial_{z^b}\otimes\dd z^c\wedge\dd z^d$, and the second fundamental form~$A$ can be written as
    \begin{equation}
        A=\left(A\indices{_{\bar{a}}^b_c_d}\dd\bar{z}^a\right)\otimes\partial_{z^b}\otimes\dd z^c\wedge\dd z^d
    \end{equation}
    where we added the parenthesis to highlight the distinction between the form and the bundle parts. We choose the coefficients of~$A$ as the unique solutions of~$A\indices{_{\bar{a}}^b_b_c}=-(g_{FS})_{c\bar{a}}$. To ease the notation, we denote by~$e_a=\partial_{z^a}\otimes\dd z^1\wedge\dd z^2$ the local frame for~$S$, with dual co-frame~$\varepsilon^a=\dd z^a\otimes\partial_{z^1}\wedge\partial_{z^2}$. We also let~$r^2:=\abs{z^1}^2+\abs{z^2}^2$, so that
    \begin{equation}\label{eq:second_fundform_dHYM}
        A=\frac{1}{(1+r^2)^2}\left[\Big(z^1\bar{z}^2\dd\bar{z}^1-(1+\abs{z^1}^2)\dd\bar{z}^2\Big)\otimes e_1-\Big(\bar{z}^1z^2\dd\bar{z}^2-(1+\abs{z^2}^2)\dd\bar{z}^1\Big)\otimes e_2\right].
    \end{equation}
    While for its adjoint~$A^*$ we find
    \begin{equation}
        A^*=\left(\overline{A\indices{_{\bar{a}}^b_{12}}}(g_{FS})_{c\bar{b}}\det(g_{FS})^{-1}\dd z^a\right)\otimes\varepsilon^c=(1+r^2)\left[\dd z^1\otimes\varepsilon^2-\dd z^2\otimes\varepsilon^1\right].
    \end{equation}
    We claim that this choice of~$A$ satisfies~\eqref{eq:dHYM_rank3}, and that in fact the connection defined by~$E$ is projectively flat, satisfying
    \begin{equation}\label{eq:ex_projectively_flat}
        \curvform(h)=-\omega\otimes\id_E.
    \end{equation}    
    As~$\curvform(h)$ is equivariant with respect to the unitary action on~$\PP^2$ and its lift to~$E$, it will be sufficient to check~\eqref{eq:ex_projectively_flat} at the point~$p=\{z^1=z^2=0\}$, for which we have
    \begin{equation}
        \begin{split}
            A=&-\dd\bar{z}^2\otimes e_1+\dd\bar{z}^1\otimes e_2,\\
            A^*=&-\dd z^2\otimes\varepsilon^1+\dd z^1\otimes\varepsilon^2.
        \end{split}
    \end{equation}
    We claim that~$D'A$ vanishes at~$p$. To see this, write~$D'A$ in the same local frame for~$S$:
    \begin{equation}
        D'A=\del\left(A\indices{_{\bar{a}}^i}\dd\bar{z}^a\right)\otimes e_i+\left(A\indices{_{\bar{a}}^i}\dd\bar{z}^a\right)\wedge D'e_i.
    \end{equation}
    Note first that, at the point~$p$,~$\del(A\indices{_{\bar{a}}^i}\dd\bar{z}^a)=0$ for all indices~$a,i$, from~\eqref{eq:second_fundform_dHYM}.
    So it remains to show that~$(D'e_i)_{\vert p}=0$. But~$e_i=\partial_{z^i}\otimes\dd z^1\wedge\dd z^2$, and~$D'$ is the~$(1,0)$-part of the Chern connection of the Fubini-Study metric on~$T\PP^2\otimes K_{\PP^2}$, so~$D'e_i$ is a sum of Christoffel symbols of the Fubini-Study metric, and they all vanish at~$p$.

    This observation already shows that the off-diagonal components of~$\curvform(h)^2$ and~$\curvform(h)\wedge\omega$ with respect to the decomposition~$E=S+\sheaf_X$ vanish. We proceed to compute the other components of~$\curvform(h)$. The direct computation gives, at the point~$p$
    \begin{equation}
        \begin{gathered}
        \frac{\I}{2\pi}A^*\wedge A=\frac{1}{2\pi}\left(\I\dd z^2\wedge\dd\bar{z}^2+\I\dd z^1\wedge\dd\bar{z}^1\right)=\omega;\\
        A\wedge A^*=
            \begin{array}{l}
                -\dd z^2\wedge\dd\bar{z}^2\otimes e_1\otimes\varepsilon^1+\dd z^2\wedge\dd\bar{z}^1\otimes e_2\otimes\varepsilon^1\\
                +\dd z^1\wedge\dd\bar{z}^2\otimes e_1\otimes\varepsilon^2-\dd z^1\wedge\dd\bar{z}^1\otimes e_2\otimes\varepsilon^2
            \end{array}
        =\begin{pmatrix}
            -\dd z^2\wedge\dd\bar{z}^2 & \dd z^1\wedge\dd\bar{z}^2\\
            \dd z^2\wedge\dd\bar{z}^1 & -\dd z^1\wedge\dd\bar{z}^1
        \end{pmatrix}.
        \end{gathered}
    \end{equation}
    We also know from Example~\ref{ex:not_subvarstable}
    \begin{equation}
        \curvform_{T\PP^2}\otimes\id_{K_X}=\frac{1}{2\pi}
        \begin{pmatrix} 2\I\dd z^1\wedge\dd\bar{z}^1+\I\dd z^2\wedge\dd\bar{z}^2 & \I\dd z^1\wedge\dd\bar{z}^2 \\ \I\dd z^2\wedge\dd\bar{z}^1 & \I\dd z^1\wedge\dd\bar{z}^1+2\I\dd z^2\wedge\dd\bar{z}^2 \end{pmatrix}.
    \end{equation}
    So~$\curvform_{T\PP^2}\otimes\id_{K_X}-\frac{\I}{2\pi}A\wedge A^*=2\omega\otimes\id_S$ and from~\eqref{eq:curvature_S} we finally obtain~\eqref{eq:ex_projectively_flat} at~$p$. This shows that the connection on~$E$ defined by~\eqref{eq:second_fundform_dHYM} is a solution of the dHYM equation, and in fact it will solve \emph{any}~$\Zch$-critical equation. We would like to show that it is also dHYM-positive, and again it is sufficient to verify this at the same point~$p$, for which the computations are straightforward. We need to consider the endomorphism
    \begin{equation}
        R:=2\alpha\,\curvform+\beta\otimes\id_E=\abs{\Zch_X(E)}\frac{3}{2}\left(1+(1+x)^2\right)\omega\otimes\id_E,
    \end{equation}
    which clearly satisfies~$\I\Tr\left[ \xi^*\wedge\xi\wedge R+\xi^*\wedge R\wedge \xi\right]>0$ for every~$\xi\in\Alt^{0,1}(\End(E_p))$.
\end{exm}

The bundle in Example~\ref{ex:dHYM_rank3_extension} thus admits a metric that is~$\Zch$-critical for any choice of polynomial central charge. However, it is \emph{not}~$\Zch$-stable for every charge, due to possible failures of~$\Zch$-positivity. Using the stability result of Corollary \ref{cor:rank2_MAstability} we can exhibit a central charge such that~$E$ admits a~$\Zch$-critical metric that \emph{can not} be~$\Zch$-positive. see Remark \ref{rmk:corank_1_stability} for the precise statement we need.

\begin{exm}\label{ex:not_positive_P2}
    Consider a polynomial central charge~$\Zch$ given by a vector~$\rho=(\rho_0,\rho_1,\rho_2)$ and the short exact sequence over~$X=\PP^2$ of Example~\ref{ex:dHYM_rank3_extension}. For simplicity, we assume that the unitary class~$U$ is trivial,~$U=1$. Then the charges of~$E$ and~$S$ are, using~\eqref{eq:dHYM_rank3_example_classes},
    \begin{equation}
    \begin{split}
        \Zch_X(E)=&3\,\rho_2-3\,\rho_1+\frac{3}{2}\rho_0\\
        \Zch_X(S)=&2\,\rho_2-3\,\rho_1+\frac{3}{2}\rho_0=\Zch_X(E)-\rho_2.
    \end{split}
    \end{equation}
    To apply Corollary~\ref{cor:rank2_MAstability} we need~\eqref{eq:Z_alphapositive}, i.e.~$\alpha>0$. Up to a multiple,~$\alpha$ is
    \begin{equation}
        \Im\left(\overline{\Zch_X(E)}\rho_0\right)=3\,\Im\left(\bar{\rho}_2\rho_0\right)-3\,\Im\left(\bar{\rho}_1\rho_0\right)
    \end{equation}
    and so we have~\eqref{eq:Z_alphapositive} if and only if~$\Im\left(\bar{\rho}_2\rho_0\right)>\Im\left(\bar{\rho}_1\rho_0\right)$. Instead, to check~$\Zch$-stability with respect to the sub-bundle~$S$ we first compute
    \begin{equation}
        \begin{split}
            \Im\left(\overline{\Zch_X(E)}\Zch_X(S)\right)=&\Im\left(\bar{\rho}_2\,\Zch_X(S)\right)=\Im\left(\bar{\rho}_2\left(2\,\rho_2-3\,\rho_1+\frac{3}{2}\rho_0\right)\right)\\  =&-3\,\Im(\bar{\rho}_2\,\rho_1)+\frac{3}{2}\,\Im(\bar{\rho}_2\,\rho_0)
        \end{split}
    \end{equation}
    and we conclude that if~$E$ is~$\Zch$-stable, then it must be
    \begin{equation}
        \Im(\bar{\rho}_2\,\rho_0)<2\,\Im(\bar{\rho}_2\,\rho_1).
    \end{equation}
    By Corollary~\ref{cor:rank2_MAstability} we deduce that~$E$ can not admit~$\Zch$-positive and~$\Zch$-critical metrics if
    \begin{equation}
        \begin{cases}
            \Im\left(\bar{\rho}_2\rho_0\right)>\Im\left(\bar{\rho}_1\rho_0\right)\\
            \Im(\bar{\rho}_2\,\rho_0)>2\,\Im(\bar{\rho}_2\,\rho_1).
        \end{cases}
    \end{equation}
    Many charges satisfy this, an example is~$\rho=(1,-\I,-1-3\I)$. On the other hand, we know that~$E$ admits a connection that is~$\Zch^\rho$-critical, so this connection can not be~$\Zch^\rho$-positive. In fact, with this choice of~$\rho$ the bundle is not even~$\Zch^\rho$-positive, and it can be easily checked that if this bundle is~$\Zch^\rho$-positive, it must be~$\Zch^\rho$-stable.
\end{exm}

\begin{exm}
    Let~$E$ be a vector bundle on a K\"ahler surface~$X$. Given a proper saturated subsheaf~$S\subset E$, we can construct an infinite family of polynomial central charges~$\Zch$ such that~$\Im\left(\Zch_X(S)\overline{\Zch_X(E)}\right)>0$, i.e. a charge for which~$E$ is~$\Zch$-unstable.

    We can in fact achieve this just by an appropriate choice of the unitary class. The computation is very similar to those in Proposition~\ref{prop:stability_comparison}, but we rewrite them to highlight the dependence on the unitary class. Denote by~$H$ the class of the K\"ahler form, assume that~$\int_XH^2=1$, and choose~$U=1+xH+yH^2$ for two real numbers~$x,y$ that will be fixed later. Then the charges of~$S$ and~$E$ respectively are (c.f. proof of Proposition~\ref{prop:stability_comparison})
    \begin{equation}
        \begin{split}
            \Zch_X(E)=\rho_0\rk(E)\left(y+x\mu_H(E)+\mu_{MA}(E)\right)+\rho_1\rk(E)\left(\mu_H(E)+x\right)+\rho_2\rk(E)\\
            \Zch_X(S)=\rho_0\rk(S)\left(y+x\mu_H(S)+\mu_{MA}(S)\right)+\rho_1\rk(S)\left(\mu_H(S)+x\right)+\rho_2\rk(S).
        \end{split}
    \end{equation}
    Hence we obtain
    \begin{equation}\label{eq:Zstab_subsheaf_general}
    \begin{split}
        \frac{\Im\left(\Zch_X(S)\overline{\Zch_X(E)}\right)}{\rk(E)\rk(S)}=&
        y\,\Im\left(\rho_0\bar{\rho}_1\right)\left(\mu_H(E)-\mu_H(S)\right)+x^2\Im\left(\rho_0\bar{\rho}_1\right)\left(\mu_H(S)-\mu_H(E)\right)\\
        +&x\big[\Im\left(\rho_0\bar{\rho}_1\right)\left(\mu_{MA}(S)-\mu_{MA}(E)\right)+\Im\left(\rho_0\bar{\rho}_2\right)\left(\mu_H(S)-\mu_H(E)\right)\big]\\
        +&\Im\left(\rho_0\bar{\rho}_1\right)
        \left(\mu_{MA}(S)\mu_H(E)-\mu_{MA}(E)\mu_H(S)\right)\\
        +&\Im\left(\rho_1\bar{\rho}_2\right)\left(\mu_H(S)-\mu_H(E)\right)+\Im\left(\rho_0\bar{\rho}_2\right)\left(\mu_{MA}(S)-\mu_{MA}(E)\right).
    \end{split}
    \end{equation}
    This is a polynomial expression in~$x$ and~$y$ of the form
    \begin{equation}
        a\,y-a\,x^2+b\,x+c.
    \end{equation}
    If the coefficient of~$y$ is nonzero, i.e. if~$\mu_H(S)\not=\mu_H(E)$, then we can fix any~$x$, say~$x=0$, and choose~$y$ so that~\eqref{eq:Zstab_subsheaf_general} is positive, so that~$S$ will destabilise~$E$. In particular, if~$E$ is Mumford stable the coefficient of~$y$ in~\eqref{eq:Zstab_subsheaf_general} is positive, and if we choose~$y>0$ very large the right-hand side of~\eqref{eq:Zstab_subsheaf_general} will be positive.
    
    Assuming instead~$\mu_H(S)=\mu_H(E)$, the coefficient of~$x^2$ in~\eqref{eq:Zstab_subsheaf_general} vanishes, and the coefficient of~$x$ is~$\Im\left(\rho_0\bar{\rho}_1\right)\left(\mu_{MA}(S)-\mu_{MA}(E)\right)$. If~$\mu_{MA}(S)\not=\mu_{MA}(E)$ we can then choose~$x$ such that~$S$ is a destabilising subsheaf with respect to the~$\Zch$-stability, while if the two Monge-Ampère slopes are equal (as are the Mumford slopes) then~\eqref{eq:Zstab_subsheaf_general} vanishes, and again this means that~$E$ is~$\Zch$-unstable.

    We can do a similar analysis for the~$\Zch$-positivity of the bundle or of a Hermitian metric~$h\in\Hermmetric(E)$, assuming that we choose~$1+x\,\omega+y\,\omega^2$ as representative of the unitary class. Then we compute
    \begin{equation}
    \begin{split}
        2\alpha\,\Tr\curvform(h)+\rk(E)\beta=&\Im\left(\frac{\overline{\Zch_X(E)}}{\abs{\Zch_X(E)}}\rho_0\right)\left(2\Tr\curvform(h)+x\,\rk(E)\,\omega\right)\\
        &+\rk(E)\Im\left(\frac{\overline{\Zch_X(E)}}{\abs{\Zch_X(E)}}\rho_1\right)\omega.
    \end{split}
    \end{equation}
    The first term on the right-hand side does not depend on~$y$. For the second term, we get
    \begin{equation}
        \frac{1}{\rk(E)}\Im\left(\overline{\Zch_X(E)}\rho_1\right)\omega=
        \Im\left(\overline{\rho_0}\rho_1\right)\left(y+x\mu_H(E)+\mu_{MA}(E)\right)\omega+\Im\left(\overline{\rho_2}\rho_1\right)\omega.
    \end{equation}
    For~$y\gg 0$ this becomes very negative, hence we can choose the charge so that the fixed metric on~$E$ is~$\Zch$-negative.
    
    The same method shows that if~$E$ is a strictly unstable Mumford bundle, we can take~$x=0$ and~$y\ll 0$ so that~$E$ is~$\Zch$-unstable and~$Z$-positive. If~$E$ is strictly Mumford semistable with~$\mu_H(S)=\mu(E)$, after fixing~$x$, for~$y\ll 0$,~$E$ is~$\Zch$-unstable and~$Z$-positive.
\end{exm}

\subsection{Z-polystability}

Consider the case of a decomposable rank~$2$ bundle~$E=L_1\oplus L_2\to X$. If~$h_i\in\Hermmetric(L_i)$ for~$i=1,2$, then we get a metric~$h=h_1\oplus h_2$ on~$E$, and
\begin{equation}
    \curvform(h)=\begin{pmatrix}
        \curvform(h_1) & 0\\ 0 & \curvform(h_2)
    \end{pmatrix}
\end{equation}
under the decomposition~$\End(E)=\End(L_1)+\Hom(L_1,L_2)+\Hom(L_2,L_1)+\End(L_2)$. Hence, for this metric,~\eqref{eq:Zcrit_vbMA} splits as the system
\begin{equation}\label{eq:Zcrit_split}
    \begin{dcases}
        \left(2\alpha\curvform(h_1)+\beta\right)^2=\beta^2-4\alpha\gamma\\
        \left(2\alpha\curvform(h_2)+\beta\right)^2=\beta^2-4\alpha\gamma.
    \end{dcases}
\end{equation}
Under the volume form hypothesis~$\beta^2-4\alpha\gamma>0$, by Yau's solution of the Calabi conjecture, the system~\eqref{eq:Zcrit_split} has solutions under the following conditions:
\begin{equation}\label{eq:split_existenceconditions}
    \begin{dcases}
        2\alpha L_i+[\beta]\mbox{ has a sign} &\mbox{for }i=1,2\\
        (2\alpha L_i+[\beta])^2=[\beta^2-4\alpha\gamma] &\mbox{for }i=1,2.
    \end{dcases}
\end{equation}
As the bundle~$E$ is decomposable, by Lemma~\ref{lemma:stable_simple} it is not~$\Zch$-stable. However, we have
\begin{lemma}\label{lemma:polystable_rank2}
    The following are equivalent:
    \begin{enumerate}
        \item~$\Im\left(\frac{\Zch_X(S)}{\Zch_X(E)}\right)\leq 0$ for any proper sub-bundle~$S\subset E$;
        \item~$\Im\left(\Zch_X(L_1)\overline{\Zch_X(L_2)}\right)=0$;
        \item~$(2\alpha L_1+[\beta])^2=(2\alpha L_2+[\beta])^2=[\beta^2-4\alpha\gamma]$.
    \end{enumerate}
\end{lemma}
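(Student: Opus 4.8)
My plan is to prove that each of the three conditions is equivalent to the single cohomological identity $\mu_{MA,\vartheta}(L_1)=\mu_{MA,\vartheta}(L_2)$ (with $\vartheta=[\beta/2\alpha]$), and deduce the lemma from that. The two facts I would use throughout are the additivity of the polynomial central charge and of the Chern character along $0\to L_1\to E\to L_2\to0$, which give $\Zch_X(L_2)=\Zch_X(E)-\Zch_X(L_1)$ and, since $\rk(E)=2$, $\mu_{MA,\vartheta}(E)=\tfrac12\big(\mu_{MA,\vartheta}(L_1)+\mu_{MA,\vartheta}(L_2)\big)$. For $(2)$: since $\Im\big(\abs{\Zch_X(L_1)}^2\big)=0$ one has $\Im\big(\Zch_X(L_1)\overline{\Zch_X(L_2)}\big)=\Im\big(\Zch_X(L_1)\overline{\Zch_X(E)}\big)$, and Proposition~\ref{prop:stability_comparison} (with $S=L_1$) rewrites this as $2\alpha\,\abs{\Zch_X(E)}\,\big(\mu_{MA,\vartheta}(L_1)-\mu_{MA,\vartheta}(E)\big)$; as $\alpha\neq0$ and $\Zch_X(E)\neq0$, condition $(2)$ amounts to $\mu_{MA,\vartheta}(L_1)=\mu_{MA,\vartheta}(E)$, hence to $\mu_{MA,\vartheta}(L_1)=\mu_{MA,\vartheta}(L_2)$. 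For $(3)$: expanding with $[\beta]=2\alpha\vartheta$ gives $(2\alpha L_i+[\beta])^2=8\alpha^2\,\mu_{MA,\vartheta}(L_i)+[\beta]^2$, so $(2\alpha L_1+[\beta])^2=(2\alpha L_2+[\beta])^2$ is again the same identity; and the defining property of the phase, $\Im(\e^{-\I\vartheta_E}\Zch_X(E))=0$, read off from \eqref{eq:Zcritical_surface_original} and \eqref{eq:abc_coeff}, forces $2[\gamma]=-[\beta].\Chern_1(E)-2\alpha\Chern_2(E)$, which for $E=L_1\oplus L_2$ becomes $[\gamma]=-2\alpha\,\mu_{MA,\vartheta}(E)$, so the remaining equality $(2\alpha L_i+[\beta])^2=[\beta^2-4\alpha\gamma]$ also reduces to $\mu_{MA,\vartheta}(L_i)=\mu_{MA,\vartheta}(E)$. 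This settles $(2)\Leftrightarrow(3)$.

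For $(1)\Rightarrow(2)$ I would just test $(1)$ against the summands $S=L_1$ and $S=L_2$: Proposition~\ref{prop:stability_comparison} turns these into $\alpha\big(\mu_{MA,\vartheta}(L_i)-\mu_{MA,\vartheta}(E)\big)\leq0$ for $i=1,2$, and since the two deviations are opposite in sign, both being $\leq0$ when $\alpha>0$ (resp.\ both $\geq0$ when $\alpha<0$) forces $\mu_{MA,\vartheta}(L_1)=\mu_{MA,\vartheta}(L_2)$.

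For $(2)\Rightarrow(1)$, set $m:=\mu_{MA,\vartheta}(L_1)=\mu_{MA,\vartheta}(L_2)=\mu_{MA,\vartheta}(E)$ and let $S\subset E$ be a proper sub-bundle, so $\rk(S)=1$ and there is an exact sequence $0\to S\to E\to Q\to0$ with $Q$ a line bundle; by Proposition~\ref{prop:stability_comparison} it suffices to show $\alpha\big(\mu_{MA,\vartheta}(S)-m\big)\leq0$. If one of the projections $S\to L_j$ vanishes, the saturation of $S$ in $E$ forces $S=L_{3-j}$ (a torsion subsheaf of the torsion-free sheaf $E/S$ must vanish), and the inequality holds with equality. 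Otherwise both maps $S\to L_i$ are injective, so $L_i=S\otimes\sheaf(D_i)$ with $D_i$ effective, $[D_1].[D_2]=0$ and $2\,\Chern_1(S)+[D_1]+[D_2]=\Chern_1(E)$; using $\mu_{MA,\vartheta}(L_1)=\mu_{MA,\vartheta}(L_2)=m$, a short computation then gives $\alpha\big(\mu_{MA,\vartheta}(S)-m\big)=-\tfrac12\,[D_1].(\alpha\,\Chern_1(E)+[\beta])$, so the inequality is equivalent to $[D_1].(\alpha\,\Chern_1(E)+[\beta])\geq0$.

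This last reduction is the main obstacle: it is not implied by $(2)$ alone, but by the positivity of the class $\alpha\,\Chern_1(E)+[\beta]$. In the setting the lemma is applied to this is available: exactly as in the proof of Lemma~\ref{lemma:subsol_positiveclass}, the existence of a $\Zch$-positive (split) metric on $E$ forces $2\alpha\Tr\curvform+\rk(E)\beta>0$, i.e.\ $\alpha\,\Chern_1(E)+[\beta]>0$ — equivalently, when the summands satisfy the sign conditions in \eqref{eq:split_existenceconditions} their sum $2(\alpha\,\Chern_1(E)+[\beta])$ inherits that sign. Granting this, $[D_1].(\alpha\,\Chern_1(E)+[\beta])>0$ for the nonzero effective $[D_1]$, so $(2)\Rightarrow(1)$ holds and the three conditions are equivalent. (When $X$ has Picard rank one, e.g.\ $X=\PP^2$, the problematic case does not even arise, since two nonzero effective divisors with $[D_1].[D_2]=0$ do not exist and $S$ must be $L_1$ or $L_2$.)
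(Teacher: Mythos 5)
Your handling of $(1)\Rightarrow(2)$ and of $(2)\Leftrightarrow(3)$ follows the paper's own route: everything is funnelled through Proposition~\ref{prop:stability_comparison}, the identity $(2\alpha L_i+[\beta])^2=8\alpha^2\mu_{MA,\vartheta}(L_i)+[\beta]^2$, and the phase relation giving $[\gamma]=-2\alpha\,\mu_{MA,\vartheta}(E)$, exactly as in the printed argument. Where you genuinely diverge is the direction $(2)\Rightarrow(1)$. The paper disposes of the whole equivalence $(1)\Leftrightarrow(2)$ in one sentence, as a ``direct consequence'' of the additivity $\Zch_X(E)=\Zch_X(L_1)+\Zch_X(L_2)$; that argument only tests the two summands $L_1,L_2$ and says nothing about other line sub-bundles of $E$. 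You instead treat an arbitrary sub-bundle $S\subset E$, reduce to $L_i=S\otimes\sheaf(D_i)$ with $D_i$ effective and $[D_1].[D_2]=0$, and compute
\begin{equation}
\alpha\big(\mu_{MA,\vartheta}(S)-\mu_{MA,\vartheta}(E)\big)=-\tfrac12\,[D_1].\big(\alpha\,\Chern_1(E)+[\beta]\big),
\end{equation}
which I checked and which is correct. This shows that $(2)\Rightarrow(1)$ is \emph{not} formal: it needs the class $2\alpha\,\Chern_1(E)+\rk(E)[\beta]$ to pair non-negatively with effective divisors (for instance $E=\sheaf\oplus\sheaf$ on $\PP^1\times\PP^1$, with a unitary class making $[\beta]$ negative on one ruling, satisfies $(2)$ trivially but would violate $(1)$ via $S=\sheaf(-1,0)$). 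Your fix --- importing $2\alpha\Tr\curvform+\rk(E)\beta>0$ from the $\Zch$-positive metric available in the context where the lemma is applied, as in Lemma~\ref{lemma:subsol_positiveclass} --- is legitimate but is an extra hypothesis not present in the statement. In short, you have not committed a gap so much as located one in the paper's own one-line treatment of $(2)\Rightarrow(1)$; the rest of your argument coincides with the paper's.
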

\begin{proof}
    The equivalence between~$1$ and~$2$ is a direct consequence of the fact that each~$L_i$ is a line bundle, and that the sequence~$0\to L_1\to E\to L_2\to 0$ splits, so that~$\Zch_X(E)=\Zch_X(L_1)+\Zch_X(L_2)$. Note also that each condition is equivalent to~$\Im\left(\overline{\Zch_X(E)}\Zch_X(L_i)\right)=0$ for~$i=1,2$. From Proposition~\ref{prop:stability_comparison} we know that~$\Im\left(\overline{\Zch_X(E)}\Zch_X(L_i)\right)=0$ is equivalent to the equality of Monge-Ampère slopes~$\mu_{MA,\vartheta}(L_i)=\mu_{MA,\vartheta}(E)$. By definition of the Monge-Ampère slope we get, for~$\vartheta:=[\beta/2\alpha]$,
    \begin{equation}
    \begin{split}
        \mu_{MA,\vartheta}(L_i)=&\Chern_2(L_i)+\Chern_1(L_i).\vartheta\\
        \mu_{MA,\vartheta}(E)=&\frac{\Chern_2(E)}{2}+\frac{\Chern_1(L_1).\vartheta+\Chern_1(L_2).\vartheta}{2}
    \end{split}
    \end{equation}
    hence the Monge-Ampère slopes are equal if and only if
    \begin{equation}
        (2\alpha L_1+[\beta])^2=(2\alpha L_2+[\beta])^2
    \end{equation}
    and they must each equal~$[\beta^2-4\alpha\gamma]$ since~$\Chern_2(E)+\Chern_1(E).\vartheta+\vartheta^2=\vartheta^2-\gamma/\alpha$.
\end{proof}
The conditions in~\eqref{eq:split_existenceconditions} imply the existence of solutions of the~$\Zch$-critical equation, so they must be part of a hypothetical~$\Zch$-polystability condition on~$E=L_1\oplus L_2$. From (the proof of)~\cite[Theorem~$1.4$]{DervanMcCarthySektnan} we know that, \emph{if~$\beta^2-4\alpha\gamma$ is a volume form}, then the following conditions are equivalent for each~$i=1,2$:
\begin{enumerate}
    \item~$L_i$ admits a~$\Zch$-critical metric;
    \item~$L_i$ admits a~$\Zch$-positive metric;
    \item~$2\alpha\,L_i+[\beta]>0$.
\end{enumerate}
In particular, under the volume form hypothesis,~\cite[Theorem~$1.4$]{DervanMcCarthySektnan} implies Conjecture~\ref{conj:Zstable} for line bundles over surfaces. Looking at the conditions in~\eqref{eq:split_existenceconditions} and Lemma~\ref{lemma:polystable_rank2}, it is then natural to interpret the equation~$\Im\left(\Zch_X(L_1)\overline{\Zch_X(L_2)}\right)=0$ as part of a hypothetical polystability condition for~$E=L_1\oplus L_2$, which motivates an extension of Definition~\ref{def:ZpositiveZstable} and Conjecture~\ref{conj:Zstable} to the case of decomposable bundles.
\begin{definition}\label{def:polystable}
    Given a polynomial central charge Z, a vector bundle~$E$ is~$\Zch$-\emph{poly}stable if it is~$\Zch$-semistable and it is a direct sum~$E=\bigoplus E_i$ of bundles that are~$\Zch$-stable, so that for every~$i,j$
    \begin{equation}\label{eq:polystab_condition}
        \Im\left(\Zch_X(E_i)\overline{\Zch_X(E_j)}\right)=0.
    \end{equation}
\end{definition}

\begin{conj}\label{conj:ZpositiveZpolystable}
    For any polynomial central charge~$\Zch$ and any holomorphic vector bundle~$E$ on a compact K\"ahler surface, there exists a~$\Zch$-positive solution~$h\in\Hermmetric(E)$ of the~$\Zch$-critical equation if and only if~$E=\bigoplus_i E_i$ is~$\Zch$-polystable, and each of the~$E_i$s is~$\Zch$-positive and~$\Zch$-positive for quotients.
\end{conj}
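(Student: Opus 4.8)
The plan is to prove Conjecture~\ref{conj:ZpositiveZpolystable} for a decomposable rank~$2$ bundle $E=L_1\oplus L_2$ over a compact K\"ahler surface $X$, assuming for the ``if'' direction the volume form hypothesis~\eqref{eq:volumeform_hyp} (under which the rank~$1$ theory of~\cite[Theorem~$1.4$]{DervanMcCarthySektnan} is available). The first step is to translate both sides of the statement into cohomological conditions. Since a line bundle is vacuously $\Zch$-stable and ``$\Zch$-positive for quotients'' is an empty condition for it, Definition~\ref{def:polystable} and Lemma~\ref{lemma:polystable_rank2} show that $E=L_1\oplus L_2$ is $\Zch$-polystable with each $L_i$ $\Zch$-positive if and only if, for $i=1,2$,
\[
2\alpha L_i+[\beta]>0 \qquad\text{and}\qquad (2\alpha L_i+[\beta])^2=[\beta^2-4\alpha\gamma],
\]
where the first condition is $\Zch$-positivity of $L_i$ by (the proof of) Lemma~\ref{lemma:subsol_positiveclass} and the Demailly--P\u{a}un criterion, and the equalities of the second are $\Zch$-polystability by Lemma~\ref{lemma:polystable_rank2}. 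Note that $\Zch$-polystability forces $\Zch_X(L_i)$ to be a positive real multiple of $\Zch_X(E)$, so that the phases $\vartheta_{L_i},\vartheta_E$ agree and the scalars $\alpha,\beta,\gamma$ attached to the $\Zch$-critical equations of $L_1$, $L_2$ and $E$ coincide.

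For the ``if'' direction I would use the diagonal ansatz. Under the volume form hypothesis $\beta^2-4\alpha\gamma$ is a positive top form whose class equals $(2\alpha L_i+[\beta])^2$, so by Yau's theorem each scalar equation in~\eqref{eq:Zcrit_split}, namely $(2\alpha\,\curvform(h_i)+\beta)^2=\beta^2-4\alpha\gamma$, has a solution $h_i\in\Hermmetric(L_i)$ with $2\alpha\,\curvform(h_i)+\beta>0$ pointwise. Then $h:=h_1\oplus h_2$ makes $2\alpha\,\curvform(h)+\beta\otimes\id_E$ block-diagonal with K\"ahler-positive blocks, so its square is $(\beta^2-4\alpha\gamma)\otimes\id_E$ and $h$ solves the $\Zch$-critical equation~\eqref{eq:Zcrit_vbMA} for $E$; moreover, expanding the pairing of Definition~\ref{ZposMetric} in block form exactly as in~\eqref{eq:positivity_comparison}, all contributions are positive Hermitian forms paired against the positive forms $2\alpha\,\curvform(h_j)+\beta$, so $h$ is $\Zch$-positive.

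For the ``only if'' direction, assume $h\in\Hermmetric(E)$ is $\Zch$-positive and $\Zch$-critical. Each $L_i$ is both a rank~$1$ and a corank~$1$ sub-bundle of $E$ that splits off $E$ as a direct summand, so the equality case of Remark~\ref{rmk:corank_1_stability} (valid for both signs of $\alpha$, by Corollary~\ref{cor:rank2_MAstability}) gives $\Im(\overline{\Zch_X(E)}\Zch_X(L_i))=0$, hence $\Zch$-polystability of $E$ by Lemma~\ref{lemma:polystable_rank2}. For the $\Zch$-positivity of each $L_i$: when $\alpha>0$, $L_i$ is a torsion-free rank~$1$ quotient of $E_{\restriction V}$ for every curve $V\subset X$, so Lemma~\ref{lemma:Zpositivity_rank1quotient} yields $(2\alpha\,\chern_1(L_i)+[\beta]).V>0$ for all $V$, whence $2\alpha L_i+[\beta]>0$ by the Demailly--P\u{a}un criterion; when $\alpha<0$ the same argument applies with $L_i$ viewed as a sub-bundle (Remark~\ref{rmk:subbundles}). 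Finally, when $\alpha=0$ the $\Zch$-critical equation is the weak Hermite--Einstein equation~\eqref{eq:alphazero}, $\Zch$-positivity of a metric becomes the metric-independent condition $\beta>0$, and both directions follow from the Donaldson--Uhlenbeck--Yau theorem as in Proposition~\ref{prop:alphazero}.

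I expect the main obstacle to be the ``if'' direction's reliance on the volume form hypothesis: the rank~$1$ existence statement is only available when $\beta^2-4\alpha\gamma$ is a genuine volume form, and although one can hope to arrange this by a suitable choice of representatives of the unitary classes once the $L_i$ are $\Zch$-positive (the class $[\beta^2-4\alpha\gamma]$ then having positive integral, as $2\alpha L_i+[\beta]$ is K\"ahler), verifying that such a choice exists (this requires controlling the interplay of the representatives of $U_1$ and $U_2$) is the delicate point. So the cleanest unconditional statement is the ``only if'' direction, with the ``if'' direction carrying~\eqref{eq:volumeform_hyp} (or a solvability statement for the possibly degenerate complex Monge--Amp\`ere equation). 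A secondary, bookkeeping, issue is to justify the phase matching in the first step when $\Zch_X(L_i)$ is a priori only a \emph{real} multiple of $\Zch_X(E)$; this is harmless, since the $\Zch$-critical equation and the positivity condition are unchanged under an overall sign flip of $(\alpha,\beta,\gamma)$, but it should be made explicit.
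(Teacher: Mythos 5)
Your proposal is essentially correct and, for the ``if'' direction and for extracting polystability in the ``only if'' direction, follows the same route as the paper's Lemma proving Conjecture~\ref{conj:ZpositiveZpolystable} for $E=L_1\oplus L_2$ under the volume form hypothesis: the diagonal ansatz plus Yau's theorem on each factor of~\eqref{eq:Zcrit_split}, and the equality case of the slope inequality from Corollary~\ref{cor:rank2_MAstability} (which forces the second fundamental form of $L_i\subset E$ to vanish) to get $\Im\left(\Zch_X(L_1)\overline{\Zch_X(L_2)}\right)=0$ via Lemma~\ref{lemma:polystable_rank2}. Where you diverge is in establishing the $\Zch$-positivity of each $L_i$ in the ``only if'' direction: you restrict to curves, invoke Lemma~\ref{lemma:Zpositivity_rank1quotient} (resp.\ Remark~\ref{rmk:subbundles}) and then Demailly--P\u{a}un, whereas the paper reads the positivity off directly: once $A=0$ the curvature $\curvform_{\alpha,\beta}(h)$ is block-diagonal, and testing the $\Zch$-positivity of $h$ against $\xi\in\Alt^{0,1}(\End(L_i))$ shows that each restricted metric $h_{L_i}$ is itself Monge--Amp\`ere positive, i.e.\ $2\alpha\,\curvform(h_{L_i})+\beta>0$ pointwise, so $2\alpha L_i+[\beta]$ contains a K\"ahler representative with no cohomological detour. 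The direct argument is preferable for two reasons: Lemma~\ref{lemma:Zpositivity_rank1quotient} is only proved for \emph{projective} surfaces (singular curves are handled via normalisation), while the conjecture and the paper's lemma are stated for compact K\"ahler surfaces; and on a non-projective surface the reduction of positivity of a class to intersection with curves is not available in the form you use it. Since the pointwise argument is already implicit in your own ``if''-direction reasoning, this is an avoidable loss of generality rather than a fatal gap. Your caution about the volume form hypothesis is well placed --- the paper's result is indeed conditional on~\eqref{eq:volumeform_hyp}, which is needed both for Yau's theorem on each factor and for the rank-$1$ equivalence of~\cite{DervanMcCarthySektnan} that you cite --- and your remark on the phase-matching bookkeeping (the $r_i$ in $\Zch_X(L_i)=r_i\e^{\I\vartheta}$ need not be positive) identifies a point the paper itself passes over.
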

Again, it is clear that this conjecture can only hold under some additional assumption, such as a hypothetical \emph{supercritical phase condition} that is yet to be understood. For the toy example we are examining, i.e.~$E=L_1\oplus L_2$, we know that we should impose the volume form hypothesis
\begin{equation}
    \beta^2-4\alpha\gamma>0.
\end{equation}
We have already proven that, under the volume form hypothesis, if~$L_1$ and~$L_2$ are~$\Zch$-positive and satisfy~\eqref{eq:polystab_condition}, then there is a~$\Zch$-positive solution of the~$\Zch$-critical equation on~$E$. Our results in Section~\ref{sec:positivity_stability} give us the converse implication as well.
\begin{lemma}
    Let~$\Zch$ be a polynomial central charge over a K\"ahler surface that satisfies the volume form hypothesis. Then Conjecture~\ref{conj:ZpositiveZpolystable} holds for any rank~$2$ decomposable vector bundle~$E=L_1\oplus L_2$.
\end{lemma}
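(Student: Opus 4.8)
The plan is to prove the ``only if'' implication, the converse having already been established in the discussion preceding the statement: if $L_1,L_2$ are $\Zch$-positive (equivalently, by Lemma~\ref{lemma:subsol_positiveclass}, $2\alpha L_i+[\beta]>0$) and satisfy~\eqref{eq:polystab_condition}, then Lemma~\ref{lemma:polystable_rank2} gives the conditions~\eqref{eq:split_existenceconditions}, Yau's theorem produces solutions of the split system~\eqref{eq:Zcrit_split}, their direct sum is a $\Zch$-positive $\Zch$-critical metric on $E$, and each $L_i$ is trivially $\Zch$-positive for quotients because $\rk(L_i)=1$. So suppose $h\in\Hermmetric(E)$ is $\Zch$-positive and $\Zch$-critical; I want to show that $E=L_1\oplus L_2$ is $\Zch$-polystable and each $L_i$ is $\Zch$-positive. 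When $\alpha=0$ this is immediate: then every metric is $\Zch$-positive, \eqref{eq:Zcritical_surface} is the Hermite-Einstein equation for the K\"ahler form $\beta$, $E$ solves it precisely when $L_1$ and $L_2$ have equal $\beta$-slope, and by the slope computation in the proof of Proposition~\ref{prop:alphazero} this is exactly~\eqref{eq:polystab_condition}, while $[\beta]>0$ makes both $L_i$ $\Zch$-positive by Lemma~\ref{lemma:subsol_positiveclass}. From now on I assume $\alpha\neq0$.

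The first step is to show that $E$ is $\Zch$-polystable, for which I would run the computation of Section~\ref{sec:positivity_stability} on the corank-$1$ sub-bundle $L_1\subset E$. Writing $A$ for its second fundamental form with respect to $h$, the key point is that $\rk(L_1)=\rk(L_2)=1$ forces $A^*\wedge A$ (and $A\wedge A^*$) to be a \emph{scalar} $(1,1)$-form, so $\Tr((A^*\wedge A)^2)=\Tr((A\wedge A^*)^2)\equiv0$ --- this is the $s=1$ case of the pointwise computation in Corollary~\ref{cor:rank2_MAstability}. Substituting this into identity~\eqref{eq:vbMA_inteq_S_simplified} and using $2\mu_{MA,\vartheta}(E)+\vartheta^2=[\eta].X$ shows that $2\mu_{MA,\vartheta}(E)-2\mu_{MA,\vartheta}(L_1)$ equals $\tfrac{\I}{2\pi}\int_X\Tr(\curvform_{L_2,\alpha,\beta}\wedge A^*\wedge A)-\tfrac{\I}{2\pi}\int_X\Tr(\curvform_{L_1,\alpha,\beta}\wedge A\wedge A^*)$, the quantity estimated in Lemma~\ref{lemma:subsol1}. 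Since $h$ is Monge-Amp\`ere positive when $\alpha>0$ and Monge-Amp\`ere negative when $\alpha<0$, that lemma (with the spurious term $\Tr((A^*\wedge A)^2)$ now zero) gives $\mu_{MA,\vartheta}(L_1)\le\mu_{MA,\vartheta}(E)$ in the first case and $\ge$ in the second, and likewise for $L_2$. Because $\mu_{MA,\vartheta}(L_1)+\mu_{MA,\vartheta}(L_2)=2\mu_{MA,\vartheta}(E)$ by additivity of the Chern character, both inequalities are equalities. Proposition~\ref{prop:stability_comparison} then yields $\Im(\Zch_X(L_i)\overline{\Zch_X(E)})=0$, and as $\Zch_X(E)=\Zch_X(L_1)+\Zch_X(L_2)$ this is exactly~\eqref{eq:polystab_condition}; by Lemma~\ref{lemma:polystable_rank2}, $E$ is $\Zch$-semistable, and since it is a direct sum of (vacuously) $\Zch$-stable line bundles it is $\Zch$-polystable.

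The second step is to show that each $L_i$ is $\Zch$-positive (they are automatically $\Zch$-positive for quotients). Here I would mimic the proof of Theorem~\ref{thm:Zcrit_positivity}: view $L_i$ as the quotient $E/L_j$ when $\alpha>0$, and as the sub-bundle $L_i\subset E$ when $\alpha<0$ --- both descriptions are available since $E$ splits --- plug the block decomposition~\eqref{eq:curvature_secondfundform} of $\curvform_{\alpha,\beta}(h)$ into the $\Zch$-positivity inequality for $h$, tested against an endomorphism $\xi$ valued in the corresponding $\End(L_i)\subset\End(E)$. As in~\eqref{eq:Zpos_quotient}, the cross term $\tfrac{\I}{2\pi}A^*\wedge A$ (respectively $A\wedge A^*$) enters with a sign that, paired with the sign of $\alpha$, is favourable, and one concludes, as in~\eqref{eq:curvature_ineq_local}, that $2\alpha\,\curvform(h_{L_i})+\beta$ is a positive $(1,1)$-form for the induced metric $h_{L_i}$. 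Its class is $2\alpha\,\chern_1(L_i)+[\beta]$, which by Lemma~\ref{lemma:subsol_positiveclass} (applied to $L_i$, whose intrinsic coefficients $\alpha,\beta$ coincide with those of $E$ by the first step) is precisely the condition that $L_i$ be $\Zch$-positive. Together with the first step, this establishes Conjecture~\ref{conj:ZpositiveZpolystable} for $E=L_1\oplus L_2$.

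I expect the delicate point to be the phase-matching used at the end of the second step. What Lemma~\ref{lemma:subsol_positiveclass} gives intrinsically for $L_i$ is positivity of $2\alpha_{L_i}\chern_1(L_i)+[\beta_{L_i}]$ with $\alpha_{L_i},\beta_{L_i}$ attached to the phase of $\Zch_X(L_i)$; this matches $2\alpha\,\chern_1(L_i)+[\beta]$ only if $\Zch_X(L_i)$ is a \emph{positive} multiple of $\Zch_X(E)$, so one must rule out that the ratio $\Zch_X(L_i)/\Zch_X(E)$ --- real by the first step --- is $\le0$ (equivalently, is $\ge1$ for the other summand). I would obtain the bound $0<\Zch_X(L_i)/\Zch_X(E)<1$ from the Hodge index theorem, using that $2\alpha\,\chern_1(L_i)+[\beta]$ (step two) and $2\alpha\,\Chern_1(E)+2[\beta]$ (from the $\Zch$-positivity of $h$ on $E$, as in the proof of Lemma~\ref{lemma:subsol_positiveclass}) are both K\"ahler classes, together with the orthogonality $(\chern_1(L_1)-\chern_1(L_2)).(2\alpha\,\Chern_1(E)+2[\beta])=0$ forced by the equality of Monge-Amp\`ere slopes proved in step one. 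The remaining verifications are routine.
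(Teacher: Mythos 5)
Your first step is essentially the paper's: both arguments feed the rank-one summands into the machinery of Section~\ref{sec:positivity_stability} (the vanishing of $\Tr((A^*\wedge A)^2)$ when sub-bundle and quotient both have rank one, Lemma~\ref{lemma:subsol1}, additivity of the Monge-Amp\`ere slope, and Proposition~\ref{prop:stability_comparison}) to force equality of slopes and hence~\eqref{eq:polystab_condition}. Where you diverge is in how you obtain positivity of the summands. The paper's proof observes that \emph{equality} in Lemma~\ref{lemma:subsol1} forces the second fundamental form $A$ to vanish; this single observation makes $\curvform_{\alpha,\beta}(h)$ block-diagonal, so each $h_{L_i}$ solves $\curvform_{\alpha,\beta}(h_{L_i})^2=\eta$ (giving~\eqref{eq:polystab_condition} in its strongest form) and inherits Monge-Amp\`ere positivity by restricting the positivity of $h$ to diagonal $\xi$. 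Your step one already puts $A=0$ within reach --- it is exactly the equality case you invoke --- so your separate quotient-type argument in step two, while correct, re-proves something you could have read off for free. Both routes reach the same intermediate conclusion, namely that $2\alpha\,\curvform(h_{L_i})+\beta>0$.

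The weak point is your step three. You are right that $2\alpha\,\chern_1(L_i)+[\beta]>0$ expresses $\Zch$-positivity of $L_i$ in the sense of Definition~\ref{def:ZpositiveZstable} only once one knows $\Zch_X(L_i)/\Zch_X(E)>0$, so that the intrinsic data $\alpha_{L_i},\beta_{L_i}$ of $L_i$ agree with those of $E$ up to a positive factor. But the Hodge-index argument you sketch does not deliver this. The ratio $t_i=\Zch_X(L_i)/\Zch_X(E)$ equals $\Re\bigl(\e^{-\I\vartheta_E}\Zch_X(L_i)\bigr)/\abs{\Zch_X(E)}$, and this real part is built from $\Re(\e^{-\I\vartheta_E}\rho_0)$, $\Re\bigl(\e^{-\I\vartheta_E}(\rho_0U_1+\rho_1[\omega])\bigr)$, and so on --- quantities on which the K\"ahlerness of $2\alpha\,\chern_1(L_i)+[\beta]$ and the orthogonality relation you quote impose no constraint, since the Hodge index theorem only sees the ``imaginary'' ($\alpha,\beta,\gamma$) side of the charge. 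Note that the paper's own proof stops at the assertion that each $h_{L_i}$ is Monge-Amp\`ere positive with respect to the $(\alpha,\beta)$ of $E$, i.e.\ it reads the positivity of the summands relative to the phase of $E$ rather than through Definition~\ref{def:ZpositiveZstable} applied to $L_i$ in isolation. Either you adopt the same reading, in which case your step three is unnecessary, or you owe an actual proof that $t_i>0$; calling the remaining verifications routine does not close this.
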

\begin{proof}
    It remains to show that if~$E=L_1\oplus L_2$ has a~$\Zch$-positive solution of the~$\Zch$-critical equation then the two bundles~$L_i$ are themselves~$\Zch$-positive and satisfy~\eqref{eq:polystab_condition}, as~$\Zch$-stability and~$\Zch$-positivity for quotients are void conditions on line bundles. We prove this by rephrasing it through the correspondence with the Monge-Ampère equation of Lemma~\ref{lemma:equation_equivalence}. We consider only the case when~$\alpha>0$, the other situation is completely symmetrical. 
    
    Assume that~$E=L_1\oplus L_2$ has a Monge-Ampère--positive metric~$h\in\Hermmetric(E)$ (with respect to~$\beta$) that solves the twisted vector bundle Monge-Ampère equation~\eqref{eq:vbMA_twisted}. By (the proof of) Corollary~\ref{cor:rank2_MAstability} then~$\mu_{MA,\vartheta}(L_i)\leq\mu_{MA,\vartheta}(E)$ for~$i=1,2$. However~$E=L_1\oplus L_2$ as a holomorphic vector bundle, so~$\mu_{MA,\vartheta}(L_i)=\mu_{MA,\vartheta}(E)$ and this implies that the second fundamental form of~$h$ with respect to the inclusion~$L_1\subset E$ vanishes (c.f. proof of Corollary~\ref{cor:rank2_MAstability}). From~\eqref{eq:curvature_secondfundform} the curvature of~$h$ satisfies
    \begin{equation}\label{eq:curv_decomposition_directsum}
        \curvform_{\alpha,\beta}(h)=\begin{pmatrix}
                    \curvform_{\alpha,\beta}(h_{L_1}) & 0\\
                    0 & \curvform_{\alpha,\beta}(h_{L_2})
                \end{pmatrix}
    \end{equation}
    where~$h_{L_i}$ denotes the restriction of~$h$ to~$L_1$ and~$L_2$ respectively. Hence~$h_{L_i}$ satisfy
    \begin{equation}
        \curvform_{\alpha,\beta}(h_{L_1})^2=\eta=\curvform_{\alpha,\beta}(h_{L_2})^2
    \end{equation}
so~\eqref{eq:polystab_condition} holds. As we are assuming that~$h$ is Monge-Ampère positive, from~\eqref{eq:curv_decomposition_directsum} it also easily follows that each~$h_{L_i}$ must be Monge-Ampère positive.
\end{proof}

\begin{rmk}
Not every pair of solutions~$(h_1,h_2)$ of~\eqref{eq:Zcrit_split} gives a~$\Zch$-critical metric~$h=h_1+h_2$ that is also~$\Zch$-positive, as the signs of~$2\alpha L_i+[\beta]$ might not be the same.
\end{rmk}

\begin{exm}
  Consider a~$E=L_1\oplus L_1^*$ where~$L_1$ is an ample line bundle over a surface. If~$\alpha\neq 0$ and~$L_1$ and~$L_1^*$ are~$\Zch$-positive, this implies that~$2\alpha L_1^*+[\beta]>0$ and~$2\alpha L_1+[\beta]>0$. Consequently,~$[\beta]$ is positive. Now ~$E$ cannnot be~$\Zch$-polystable, since Lemma~\ref{lemma:polystable_rank2} and Equation~\ref{eq:polystab_condition} imply that~$2\alpha L_1[\beta]=0$. If~$\alpha=0$, the bundle is not Mumford polystable since~$L_1$ and~$L_1^*$ have different degrees. Consequently, we have seen that for any~$Z$-charge, such bundle~$E$ is can not be simultanously ~$\Zch$-positive and~$\Zch$-polystable.
\end{exm}

\begin{rmk}\label{rmk:polystab_issue}
    It is natural to ask if a direct sum of~$\Zch$-stable bundles~$E=\bigoplus_i E_i$ that satisfy~\eqref{eq:polystab_condition} is~$\Zch$-semistable. This is indeed the case if the~$E_i$ are line bundles, but it might fail in general. We can assume that~$E$ has just two components,~$E=E_1\oplus E_2$. If~$S\subset E$ is a coherent saturated subsheaf of rank~$0<\rk(S)<\rk(E)$, we have
    \begin{equation}
        \begin{tikzcd}
            0 \ar{r} & E_1 \ar{r} & E_1\oplus E_2 \ar{r} & E_2 \ar{r} & 0\\
            0 \ar{r} & S_1 \ar{r} \ar[hookrightarrow]{u} & S \ar{r} \ar[hookrightarrow]{u} & S_2 \ar{r} \ar[hookrightarrow]{u} & 0
        \end{tikzcd}        
    \end{equation}
    for~$S_1=S\cap E_1\times\{0\}$,~$S_2$ is the projection of~$S$ induced by~$E_1\oplus E_2\to E_2$, and all vertical arrows are injective. To check semistability, we should compute
    \begin{equation}\label{eq:polystab_semistab}
        \Im\left(\Zch_X(S)\overline{\Zch_X(E)}\right) = \sum_{i,j=1}^2\Im\left(\Zch_X(S_i)\overline{\Zch_X(E_j)}\right)
    \end{equation}
    and check that it is always non-positive. If the~$E_i$ are line bundles satisfying~\eqref{eq:polystab_condition}, then~\eqref{eq:polystab_semistab} always vanishes so there is nothing to check. Otherwise,~\eqref{eq:polystab_condition} implies that there exists an angle~$\e^{\I\vartheta}$ and real numbers~$r_1,r_2\in \mathbb{R}^*$ such that~$\Zch_X(E_i)=r_i\e^{\I\vartheta}$ for~$i=1,2$. Then~\eqref{eq:polystab_semistab} becomes
    \begin{equation}
        \Im\left(\Zch_X(S)\overline{\Zch_X(E)}\right) = 
        (r_1+r_2) \left(\frac{\Im\left(\Zch_X(S_1)\overline{\Zch_X(E_1)}\right)}{r_1}+\frac{\Im\left(\Zch_X(S_2)\overline{\Zch_X(E_2)}\right)}{r_2}\right).
    \end{equation}
    As each~$E_i$ is~$\Zch$-stable,~$\Im\left(\Zch_X(S_i)\overline{\Zch_X(E_i)}\right)\leq 0$ for~$i=1,2$, but the above sum might still be positive if~$r_1$ and~$r_2$ do not have the same sign. Some additional conditions on the bundles~$E_i$ however guarantee that~$E=E_1\oplus E_2$ is semistable. For example if the bundles~$E_i$ all satisfy condition~\eqref{eq:Z_alphapositive} (so that~$\alpha_{E_i}>0$, see Remark~\ref{rmk:strong_Zpositive}) then~$r_1$ and~$r_2$ have the same sign.
\end{rmk}

\subsection{Z-stability and Gieseker stability}

In his PhD thesis~\cite{Leung_equation_thesis}, Leung introduced the notion of \emph{almost Hermitian-Einstein} metric. For a given holomorphic vector bundle~$E$ over a polarised compact complex manifold~$L\to X$ and~$k$ sufficiently large, a metric~$h_k$ is said to be \emph{almost Hermitian-Einstein} if its Chern connection satisfies
\begin{equation}\label{eq:almostHE_Leung}
\left[\exp\left(\curvform(h_k)+k\,\omega\otimes\id_E\right)\wedge\Todd_X\right]^\topdeg=c_k\,\frac{\omega^n}{n!}\otimes\id_E
\end{equation}
where~$\omega$ is a K\"ahler metric in~$\chern_1(L)$ and~$\Todd_X$ is the~$\omega$-harmonic representative of the Todd class of~$X$. This equation has been introduced as an analytic counterpart to Gieseker stability.

Note that~$\curvform(h_k)+k\,\omega\otimes\id_E$ is the curvature form of a connection on~$E\otimes L^k$; then, by Hirzebruch-Riemann-Roch we have
\begin{equation}
\int_X\Tr\left[\exp\left(\curvform(h_k)+k\,\omega\otimes\id_E\right)\wedge\Todd_X\right]^\topdeg=\Chern(E\otimes L^k).\Todd_X=\chi(E\otimes L^k),
\end{equation}
where~$\chi(E\otimes L^k)$ denotes the Euler Characteristic of the product bundle, so the constant~$c_k$ in~\eqref{eq:almostHE_Leung} must be
\begin{equation}
c_k=\frac{\chi(E\otimes L^k)}{\vol(X)\,\rk E}.
\end{equation}
Leung's equation~\eqref{eq:almostHE_Leung} can be reinterpreted as a special case of the~$\Zch$-critical equation~\eqref{eq:Zcrit}, given by the \emph{almost Hermite-Einstein} charge
\begin{equation}
\Zdiff^{aHE}_{k}(h)=c_k\,\frac{\omega^n}{n!}\otimes\id_E+\I\big(\exp\left(\curvform(h)+k\,\omega\otimes\id_E\right)\wedge\Todd_X\big)^\topdeg.
\end{equation}
Indeed, by integrating over~$X$ we find
\begin{equation}
\int_X\Tr\left(\Zdiff^{aHE}_{k}(h)\right)=\chi(E\otimes L^k)\left(1+\I\right)
\end{equation}
so that the corresponding equation~\eqref{eq:Zcrit} is equivalent to (at least if~$\chi(E\otimes L^k)\not=0$)
\begin{equation}
\begin{split}
0=&\Im\left((1-\I)\Zdiff^{aHE}_{k}(h)\right)=\\
=&-c_k\,\frac{\omega^n}{n!}\otimes\id_E+\big[\exp\left(\curvform(h)+k\,\omega\otimes\id_E\right)\wedge\Todd_X\big]^\topdeg
\end{split}
\end{equation}
which is precisely~\eqref{eq:almostHE_Leung}. Note that~$\Zdiff_k^{aHE}$ can be expressed in a form closer to~\eqref{eq:Zdiff_operator}:
\begin{equation}
\begin{split}
\Zdiff^{aHE}_{k}(h)=&\left[c_k\,\frac{\omega^n}{n!}\e^{\curvform(h)}\Todd_X+\I\e^{k\,\omega}\e^{\curvform(h)}\Todd_X\right]^\topdeg\\
=&\left[\left(\left(\frac{c_k}{k^n}+\I\right)\frac{(k\omega)^n}{n!}\id_E+\left(\sum_{j=0}^{n-1}\frac{\I}{j!}(k\omega)^j\right)\right)\wedge\e^{\curvform(h)}\wedge\Todd_X\right]^\topdeg.
\end{split}
\end{equation}
The stability vector~$\rho^{aHE}:=(\rho_1,\dots,\rho_n)$ is then
\begin{equation}
\begin{dcases}
\rho_j=\frac{\I}{j!} & \mbox{for }0\leq j\leq n-1\\
\rho_n=\frac{1}{n!}\left(\frac{c_k}{k^n}+\I\right).
\end{dcases}
\end{equation}
An important remark is that the almost Hermite-Einstein charge does not fit precisely in the discussion of~\cite{DervanMcCarthySektnan}, as it is not properly a polynomial central charge:~$\rho_{j}/\rho_{j+1}=j+1$ for~$j<n-1$, hence~$\Im(\rho_{j}/\rho_{j+1})$ is \emph{not} positive for~$j<n-1$. It is however true that~$\Im(\rho_{n-1}/\rho_{n})>0$ for~$k\to\infty$ (by asymptotic Riemann-Roch), which is the assumption needed to carry out the asymptotic analysis of the equation in~\cite{DervanMcCarthySektnan}. Another issue is that the coefficient~$\rho_n$ defining the charge depends on the bundle~$E$ itself, so the charge is not additive over short exact sequences, for example.

Nevertheless, given a bundle~$E$, we can still apply our results to the almost Hermite-Einstein charge, by considering the coefficients~$\rho_j$ as fixed. In this way we obtain a~$\Zch$-critical equation for \emph{any} bundle~$F$, which will coincide with the almost Hermite-Einstein equation on~$F$ only if~$\chi(F\otimes L^k)\rk(E)=\chi(E\otimes L^k)\rk(F)$: explicitly, the \emph{almost Hermite-Einstein charge defined by~$E\to X$},~$\Zch^{aHE}_{E,k}$, is defined by the unitary class~$U=\Todd_X$ and the stability vector
\begin{equation}
    \begin{dcases}
        \rho_j=\frac{\I}{j!} & \mbox{for }0\leq j\leq n-1\\
        \rho_n=\frac{1}{n!}\left(\frac{c_k}{k^n}+\I\right)
    \end{dcases}
\end{equation}
so that for any vector bundle~$F$
\begin{equation}
    \Zch^{aHE}_{E,k}(F)=\chi(E\otimes L^k)\frac{\rk(F)}{\rk(E)}+\I\chi(F\otimes L^k).
\end{equation}
In particular for any coherent subsheaf~${F}\subset E$ we have
\begin{equation}
\begin{split}
    \Im\left(\overline{\Zch^{aHE}_{E,k}(E)}\,\Zch^{aHE}_{E,k}(F)\right)=&\chi(E\otimes L^k)\Im\left(\left(1-\I\right)\left(\chi(E\otimes L^k)\frac{\rk(F)}{\rk(E)}+\I\chi(F\otimes L^k)\right)\right)\\
    =&\chi(E\otimes L^k)\left(\chi(F\otimes L^k)-\chi(E\otimes L^k)\frac{\rk(F)}{\rk(E)}\right).
\end{split}
\end{equation}
Since~$\chi(E\otimes L^k)>0$ for~$k\gg 0$, we obtain that ~$\Zch^{aHE}_{E,k}$-stability of~$E$ coincides with Gieseker stability for sufficiently large~$k$. \\
This remark has two applications. Firstly, all our results however apply to this almost-Hermitian charge for any~$k$, thus giving an analogue of Gieseker-stability in a non-asymptotic case, for rank~$2$ bundles over surfaces.
Secondly, the proof of  Theorem~\ref{thm:asymptotic_stability} works in that case and provides the existence of an almost Hermitian-Einstein metric on a Gieseker stable bundle which is sufficiently smooth, completing the proof of the main result of Leung's thesis. Moreover, the techniques of~\cite{DervanMcCarthySektnan} do not require~$k$ to be a integer, it can be chosen as a real positive number sufficiently large.
\begin{exm}\label{exm:suite}
We continue to investigate Example~\ref{exm:CP2blowup} in order to construct a~$\Zch$-positive~$\Zch$-critical metric on~$E$, thus checking Conjecture~\ref{conj:Zstable} in this particular case. As before, we consider a non-split extension~$$0\to \mathcal{L}_r\to E\to \sheaf_X\to 0$$ over~$X$, the blow-up of~$\PP^2$ at one point, but we fix~$[\omega]=3H-E_1=c_1(X)$. 
Given any stability vector~$\rho=(\rho_0,\rho_1,\rho_2)$, we consider ~$U_1$ proportional to~$c_1(X)$ such that~$\alpha>0$ in~\eqref{eq:abc_coeff}. Since~$\ch_2(\mathcal{L}_r)<\ch_2(\mathcal{O}_X)$, the same reasoning as we did for Monge-Ampère stability shows that~$E$ is Gieseker stable (and thus simple) with respect to the integral class~$[\omega]$. From the exact sequence,~$E$ is also sufficiently smooth as the graded object~$Gr(E)=\mathcal{L}_r\oplus E/\mathcal{L}_r$ is a holomorphic bundle. 
 Consequently, there exists~$k_0(r)>0$ such that for any real~$k>k_0(r)$, there exists a metric~$h_{r,k}$ on~$E$ that solves~\eqref{eq:almostHE_Leung} and this simplifies to
\begin{equation}\label{eq:Lsimlified}\mathcal{F}(h_{r,k})^2+ \left(2k+\frac{1}{2}\right)\omega\wedge\mathcal{F}(h_{r,k}) =c'_{r,k}\omega^2 \otimes \id_E 
\end{equation}
where~$c'_{r,k}$ is a topological constant.
We explain now how we will fix the~$\Zch$-charge. 

As we explained before, we have the flexibility to choose~$U_2$ independently of~$r$ such that~$\vert Z_X(E)\vert\beta$ writes as~$\kappa\omega$ where~$\kappa>0$ is as large as we want. Since~$\vert Z_X(E)\vert\alpha$ is positive and does not depend on~$U_2$, the~$\Zch$-critical equation~\eqref{eq:Zcritical_surface} becomes
\begin{equation}\label{eq:Zcritexample}\mathcal{F}(h)^2+\kappa'\omega \wedge \mathcal{F}(h)+\gamma'\omega^2\otimes\id_E=0
\end{equation}
where~$\kappa'=\frac{\kappa}{\alpha}>0$ is large as we want while~$\gamma'$ is a constant, which is fixed topologically. From above discussion, we can choose~$U_2$ independently of~$r$ such that~$\kappa'>2k_0(r)+\frac{1}{2}$ and we can identify the topological constants~$\gamma'=-c'_{r,\kappa/2-1/4}$. Consequently, the almost Hermitian-Einstein metric~$h_{r,\kappa/2-1/4}$ is solution to the~$\Zch$-critical equation.

To sum up, we have obtained an infinite family of Mumford semistable bundles~$E$ (in the~$r$ parameter) such that for any stability vector~$\rho$ and for an infinite choice of unipotent classes~$1+u_1+u_2$,~$E$ is~$\Zch$-stable and admits a~$\Zch$-critical metric. From Theorem~\ref{thm:asymptotic_stability}, the boundedness of the almost Hermitian-Einstein metrics ensure that 
$$\I\Tr\left[\left(2\alpha\,\curvform(h_{r,\kappa/2-1/4})+\beta\otimes\id_E\right)\wedge\xi^*\wedge\xi\right]_{\sym}>0$$
for any~$\xi\in T^{0,1}_p{}^*X\times\End(E_p)$ at~$p\in X$, since~$\kappa$ can be taken arbitrarily large. This shows that we have obtained a~$\Zch$-positive metric.
\end{exm}


\printbibliography

\bigbreak

\noindent Département de Mathématiques, Université du Québec à Montréal (UQAM), C.P. 8888, Succ. Centre-Ville. Montréal (Québec) H3C 3P8, Canada

\smallbreak

\noindent keller.julien.3@uqam.ca\\
scarpa.carlo@courrier.uqam.ca

%

\end{document}